%

\documentclass[11pt,twoside]{amsart}
\usepackage{latexsym,amssymb,amsmath}
\usepackage[all]{xy}
\usepackage{appendix}

\textwidth=16.00cm
\textheight=22.00cm
\topmargin=0.00cm
\oddsidemargin=0.00cm
\evensidemargin=0.00cm
\headheight=0cm
\headsep=1cm
\headsep=0.5cm
\numberwithin{equation}{section}
\hyphenation{semi-stable}
\setlength{\parskip}{3pt}

\newtheorem{theorem}{Theorem}[section]
\newtheorem{lemma}[theorem]{Lemma}
\newtheorem{proposition}[theorem]{Proposition}
\newtheorem{corollary}[theorem]{Corollary}

\theoremstyle{definition}
\newtheorem{definition}[theorem]{Definition}
\newtheorem{procedure}[theorem]{Procedure}
\newtheorem{remark}[theorem]{Remark}
\newtheorem{example}[theorem]{Example}

\begin{document}

\title[Signature invariants]{Signature invariants of monomial
ideals}

\thanks{The first author was supported by a scholarship from SECIHTI,
M\'exico. The second and third authors were supported by SNII, M\'exico.}

\author[J. Ibarguen]{Jovanny Ibarguen}
\address{
Departamento de Matem\'aticas\\
Cinvestav, Av. IPN 2508, 07360, CDMX, M\'exico.
}
\email{jibarguen@math.cinvestav.mx}

\author[C. E. Valencia]{Carlos E. Valencia}
\address{
Departamento de Matem\'aticas\\
Cinvestav, Av. IPN 2508, 07360, CDMX, M\'exico.
}
\email{cvalencia@math.cinvestav.edu.mx}
\author[R. H. Villarreal]{Rafael H. Villarreal}
\address{
Departamento de Matem\'aticas\\
Cinvestav, Av. IPN 2508, 07360, CDMX, M\'exico.
}
\email{rvillarreal@cinvestav.mx}

\keywords{Monomial ideal, signature, incidence matrix, depth,
anti-chain, Cohen--Macaulay ideal, Gorenstein ideal, associated prime,
regularity, weighted polarization, v-number}  
\subjclass[2020]{Primary 13F55; Secondary 05C69, 13F20}

\begin{abstract} 
Let $I$ be a monomial ideal of a polynomial ring $R=K[x_1,\ldots,x_n]$
over a field $K$ and let ${\rm sgn}(I)$ be its signature ideal. If $I$
is not a principal ideal, we show that the depth of $R/I$ is the depth
of $R/{\rm sgn}(I)$, and the regularity of
$R/{\rm sgn}(I)$ is at most the regularity of $R/I$. For ideals of height at least $2$, we 
show that the associated primes of $I$ and ${\rm sgn}(I)$ are the same, and we show 
that $I$ is Cohen--Macaulay (resp. Gorenstein) if and only if ${\rm sgn}(I)$ is
Cohen--Macaulay (resp. Gorenstein), and furthermore we show that the v-number of
${\rm sgn}(I)$ is at most the v-number of $I$ and compare the irreducible
decompositions of $I$ and ${\rm sgn}(I)$. We give an algorithm 
to compute the signature of a
monomial ideal using \textit{Macaulay}$2$, and an algorithm 
to examine given families of monomial ideals by computing 
their signature ideals and determining which of these are
Cohen--Macaulay or Gorenstein.  
\end{abstract}

\maketitle

\section{Introduction}\label{intro-section}

Let $R=K[x_1,\ldots,x_n] $ be a polynomial ring over a field $K$ and
let $\mathbb{N}=\{0,1,2,\dots\}$. 
To make notation simpler, we use the following
multi-index notation to denote the monomials of $R$: 
$$
x^{a}:=x_1^{a_{1}}\cdots x_n^{a_{n}}\ \mbox{ for }
a=(a_{1},\ldots,a_n)\in \mathbb{N}^n,
$$
and define the \textit{support} of $x^a$ as ${\rm
supp}(x^a):=\{x_i\mid a_i>0\}$. 
The set $\mathbb{N}^n$ is a poset $(\mathbb{N}^n,\,\leq)$ under the usual
componentwise order, and equivalently the set of monomials of $R$, denoted by $\mathbb{M}_n$, is 
a poset ($\mathbb{M}_n$,\,$\preceq$) under divisibility. 
There is an isomorphism given by  
$$
\mathbb{N}^n\longrightarrow\mathbb{M}_n,\quad 
a\mapsto x^a,
$$
between the additive semigroup $(\mathbb{N}^n,\,+)$ and the
multiplicative semigroup $(\mathbb{M}_n,\,\cdot\,)$. The set of
monomial ideals of $R$ is denoted by $\mathcal{M}_n(R)$. 
Let $I$ be a monomial ideal of $R$. 
Note that, by Dickson's lemma \cite[Lemma 3.3.3]{monalg-rev}, 
the ideal $I$ is always minimally generated by a unique finite set $G(I)$ of 
monomials:
$$G(I):=\{x^{v_1},\ldots,x^{v_q}\}.$$
\quad The \textit{incidence matrix} of $I$, 
denoted by $A$, is the 
$n\times q$ matrix with column vectors $v_1,\ldots,v_q$. An \textit{anti-chain} of
($\mathbb{M}_n$,\,$\preceq$) is a set of non-comparable
monomials which is necessarily finite by Dickson's lemma. There is a one-to-one
correspondence between the set of monomial ideals $\mathcal{M}_n(R)$ and the set 
of anti-chains of $\mathbb{M}_n$:
$$
I \longmapsto G(I),
$$ 
and for this reason a monomial ideal is called an
\textit{anti-chain} ideal. 

The signature of $I$ was introduced by Ibarguen et al.
\cite{signature} to study 
its irreducible decomposition and algebraic invariants. For
convenience, we first define the signature of its incidence matrix. 

The \textit{signature} of $A$, 
denoted ${\rm sgn}(A)$, is the $n\times q$ matrix constructed as
follows. For each row $c=\{c_1,\ldots,c_q\}$ of $A$, we list the
distinct entries of $c$ in ascending order 
\begin{equation}\label{nov27-25}
m=\{m_0,m_1,\ldots,m_r\},\ m_0<m_1<\cdots<m_r,\ \, 0\leq r\leq q-1.
\end{equation} 
\quad Then, for each $c_i$ consider the position $j_i$
of $c_i$ relative to $m$, that is, $c_i=m_{j_i}$, $0\leq j_i\leq r$.
Letting ${\rm sgn}(c):=\{j_1,\ldots,j_q\}$, the \textit{signature} ${\rm
sgn}(A)$ of $A$ is
the matrix with rows ${\rm sgn}(c)$ with $c$ running over all rows of
$A$. Note that each entry of ${\rm sgn}(A)$ is at most $q-1$.

The \textit{signature} of a
monomial ideal $I$ with incidence matrix $A$, denoted by ${\rm
sgn}(I)$, is the ideal of $R$ generated by the 
monomials corresponding to the columns of   
${\rm sgn}(A)$. Note that the 
degree of any minimal generator of ${\rm sgn}(I)$ is at most $n(q-1)$.
If $I$ is a monomial ideal and $I$ 
is not principal, then ${\rm sgn}(I)$ has height at least $2$
(Proposition~\ref{sgn-ht}). 

Some of the algebraic invariants and combinatorial properties of a monomial
ideal $I$ are preserved when taking the signature
\cite{2025-MFRSignature,signature,2025-RegularSequences}. 
If the height of the ideal $I$ is at least $2$, the signature reduces the
degrees of the monomials in $G(I)$ while  
leaving invariant the supports of the monomials in $G(I)$. 
This is different to algebraic shifting, where the idea
is to associate with a simplicial 
complex $\Delta$ a shifted simplicial complex that shares some
homological properties with $\Delta$, e.g., has the same Betti
numbers. 
The shifting technique is rich in connections and
applications. There is also the notion of combinatorial shifting 
(see \cite{kalai} and references therein). 

The columns of ${\rm sgn}(A)$ form an anti-chain
in $\mathbb{N}^n$ \cite[Lemma~3.7]{signature}
and the rows of
${\rm sgn}(A)$ are \textit{tight} in the sense that,  
for any $1\leq
i\leq n$, the set of distinct entries of 
the $i$-th row of ${\rm sgn}(A)$, listed in ascending order, is
$\{0,1,2,\ldots,\ell_i\}$ for some $\ell_i$. Equivalently, from
Eq.~\eqref{nov27-25}, a row $c=(c_1,\ldots,c_q)$ of $A$ is not tight 
if there is $0\leq k<r$ such that $m_{k+1}> m_{k}+1$. 
If $I$ is a squarefree monomial ideal of
height at least $2$, then ${\rm sgn}(I)=I$ (Proposition~\ref{reviewer1}), 
and if $I=(x^a)$ is a
principal ideal, then ${\rm sgn}(A)$ is $(0,\ldots,0)^\top$ and ${\rm
sgn}(I)=R$.

The signature defines an equivalence relation on the semigroup 
$\mathcal{M}_n(R)$ of monomial ideals,  given
by, $I\sim J$ if and only if ${\rm sgn}(I)={\rm sgn}(J)$. The
equivalence classes are called the \textit{signature classes} of 
$\mathcal{M}_n(R)$. The signature class of ${\rm sgn}(I)$ is denoted by $[{\rm
sgn}(I)]$. If $I=R$, then $[R]$ is the set of 
 principal ideals of $\mathcal{M}_n(R)$. The fact that ${\rm
sgn}({\rm sgn}(I))={\rm sgn}(I)$  for any $I\in\mathcal{M}_n(R)$ simply
means that $I\in [{\rm sgn}(I)]$ (Lemma~\ref{sgn2=sgn}). 
Thus, the signature gives simple
representatives for the signature classes that may
be easier to handle computationally since the degrees of the monomials
of $G({\rm sgn}(I))$ are smaller than those of the monomials of
$G(I)$. 

Given $q\geq 1$ and $n\geq 2$, although the set of monomial ideals of $\mathcal{M}_n(R)$
minimally generated by 
$q$ monomials is 
infinite (Example~\ref{q-n-infinity}), the set of signatures of these ideals is finite
\cite{moran-tesis} (Lemma~\ref{nov30-25}). In his PhD thesis Moran \cite{moran-tesis} gives an
algorithm to compute all the possible signatures for monomial ideals with $q$ minimal generators
in $n$ variables. 

We are able to identify properties and algebraic invariants
that are the same for all ideals in a signature class of 
$\mathcal{M}_n(R)$ or for all
ideals of height at least $2$ in a signature class of $\mathcal{M}_n(R)$.  

The algebraic invariants of monomial ideals that we consider in this work are the depth,
projective dimension, Krull dimension, regularity, and v-number
\cite{min-dis-generalized,Eisen}. Let $I$ be a graded ideal of $R$. 
The depth and Krull dimension of $R/I$, and the $a$-invariant,
regularity, and 
depth of $R/I$ are
related by the inequalities \cite{Mats}, \cite[Corollary~B.4.1]{Vas1}:
\begin{equation*}
{\rm depth}(R/I)\leq\dim(R/I)\ \mbox{ and }\ a(R/I)\leq{\rm 
reg}(R/I)-{\rm depth}(R/I),
\end{equation*}
respectively, with equality everywhere if and only if $R/I$ is Cohen--Macaulay. 
The depth and projective dimension are related by the  
Auslander--Buchsbaum formula \cite{BHer}: 
\begin{equation*}\label{auslander-buchsbaum-formula}
{\rm pd}_R(R/I)+{\rm depth}(R/I)=\dim(R).
\end{equation*}
\quad The ring $R/I$ is called \textit{Gorenstein} if $R/I$ is a Cohen--Macaulay
ring of type $1$ \cite{BHer}. We say that $I$ is
\textit{Cohen--Macaulay} (resp. \textit{Gorenstein}) if $R/I$ is Cohen--Macaulay (resp.
Gorenstein). 

For monomial ideals, Mart\'\i nez-Bernal et al. \cite[Theorems~3.1 and
3.7]{depth-monomial} proved that, one can use the polarization technique due to
Fr\"oberg \cite[p.~203]{monalg-rev} to show that the depth of $R/I$ is locally
non-decreasing at each variable $x_i$ when lowering the top degree of
$x_i$ that occurs in $G(I)$,
and that one can also control the regularity. In
Section~\ref{tight-matrix-section}, we introduce the \textit{shift} and 
the \textit{weighted partial polarization} operations:
$$
I\longmapsto I_{\rm sft}\ \mbox{ and }\ I\longmapsto I_{\rm pol},
$$
respectively, using a new weighted variable $x_0$ and letting
$S=R[x_0]$. A recursive applications of these
operations gives the signature of $I$ and the full weighted
polarization of $I$, respectively (Proposition~\ref{sep22-25}, 
Example~\ref{signature-example}). This allows us to show that the
signature preserves the depth and lowers the regularity of $R/I$ 
(Theorems~\ref{depth=sgn} and \ref{reg-sgn}), and
to compare the minimal free resolutions of $R/I$ and $R/{\rm sgn}(I)$
with that of $S/I_{\rm pol}$ 
(Eqs.~\eqref{dec25-25}--\eqref{jan1-26-1} and
Remark~\ref{jan1-26-remark}). We compare the irreducible
decompositions of $I$ and ${\rm sgn}(I)$
(Theorem~\ref{irred-i-sgni-theo}).

We come to our first main result. As is seen below, parts (a) and (d)
hold more generally 
for non-principal monomial ideals.

\medskip

\noindent{\bf Theorem~\ref{sgn-invariants}}\textit{\ Let $I$ be a
monomial ideal of $R$ of height at least 
$2$. The following hold. 
\begin{enumerate} 
\item[\rm(a)] ${\rm depth}(R/I)={\rm depth}(R/{\rm sgn}(I))$.
\item[\rm(b)] ${\rm dim}(R/I)={\rm dim}(R/{\rm sgn}(I))$.
\item[\rm(c)] $R/I$ is Cohen--Macaulay $($resp. Gorenstein$)$ if and only 
if $R/{\rm sgn}(I)$ is Cohen--Macaulay $($resp. Gorenstein$)$.
\item[\rm(d)] ${\rm reg}(R/I)\geq{\rm reg}(R/{\rm sgn}(I))$.
\end{enumerate}
}

As a consequence, we recover two results of Gimenez at al. 
\cite{cm-oriented-trees}. One of them, comparing the Cohen--Macaulay property  
of squarefree monomial ideals obtained by assigning positive weights to the 
variables of the ring $R$, and the other comparing the Cohen--Macaulay
property of certain weighted oriented graphs (Corollaries~\ref{coro1} and
\ref{oct20-17}). 

We come to our main result on the depth of quotient rings of monomial
ideals that gives a positive answer to a recent
conjecture of Ibarguen et al.
\cite[Conjecture~5.1]{signature}. 

\medskip

\noindent \textbf{Theorem~\ref{depth=sgn}}\textit{
Let $I$ be a non-principal monomial ideal of $R$. Then, 
$$
{\rm depth}(R/I)={\rm depth}(R/{\rm sgn}(I)).
$$
}

The signature of a monomial ideal $I$ lowers the degrees of the
minimal generators of $I$. The following result shows that 
the signature of $I$ lowers the regularity of $I$.  

\noindent \textbf{Theorem~\ref{reg-sgn}}\textit{
Let $I$ be a non-principal monomial ideal of $R$. Then, 
$$
{\rm reg}(R/I)\geq{\rm reg}(R/{\rm sgn}(I)).
$$
}

The next invariant that we consider is the v-number of $I$, that is closely
related to the associated primes of $I$.   
A prime ideal $\mathfrak{p}$ of $R$ is an \textit{associated prime} 
of a monomial ideal $I$ if ${\mathfrak p}=(I\colon x^a)$ for 
some $x^a\in R$, where $(I\colon x^a):=\{g\in R\mid gx^a\in I\}$
is a \textit{colon ideal}. The set of associated primes of $I$ is denoted by
${\rm Ass}(I)$. The v-{\em number} of $I$, denoted ${\rm
v}(I)$, is the following invariant of
$I$ that was introduced for graded ideals by Cooper et al. 
\cite[Definition~4.1]{min-dis-generalized}:
\begin{equation*}
{\rm v}(I):=\min\{d\geq 0 \mid\exists\, x^a\mbox{ of degree }d
\mbox{ and }\mathfrak{p} \in {\rm Ass}(I) \mbox{ with } (I\colon x^a)
=\mathfrak{p}\}.
\end{equation*} 
\quad 
The notion of {\rm v-number} is relatively new, we refer to
\cite{civan,Ficarra,
v-number,Terai-v-number,
Saha,
saha-sengupta,wolmer-survey} and
references therein for some of the main results on the v-number of
monomial ideals. 

To compute the associated primes and {\rm v}-number of monomial
ideals, 
we use \textit{Macaulay}$2$ \cite{mac2}
together with the algorithms given in \cite[Theorem 1]{im-vnumber} and
\cite[Proposition~4.2]{min-dis-generalized}. 

For using below recall that an ideal $I$ is 
called \textit{unmixed} if ${\rm ht}(I)={\rm ht}(\mathfrak{p})$ for all
$\mathfrak{p}\in{\rm Ass}(I)$, where ${\rm ht}(I)$ denotes the 
\textit{height} or \textit{codimension} of $I$.   

Our main result on v-numbers shows that the v-number of ${\rm sgn}(I)$ is a
lower bound for the v-number of $I$. To prove this, we show that 
${\rm Ass}(I)={\rm Ass}({\rm sgn}(I))$ when ${\rm ht}(I)\geq 2$,  
by proving that given an associated prime $\mathfrak{p}=(I\colon x^a)$
of $I$, we can
find $x^b$ in terms of $x^a$ and $I_{\rm sft}$ such that 
$\mathfrak{p}=(I_{\rm sft}\colon x^b)$ and $\deg(x^b)\leq\deg(x^a)$,
and vice versa (Corollary~\ref{ass-v-number-coro},
Example~\ref{example-ass-ij}). 

\medskip

\noindent \textbf{Theorem~\ref{ass-v-number}}\textit{
Let $I$ be a monomial ideal of height $\geq 2$ and let ${\rm v}(I)$ be
its ${\rm v}$-number. Then
\begin{enumerate}
\item[\rm(i)] ${\rm Ass}(I)={\rm Ass}({\rm sgn}(I))$.
\item[\rm(ii)] ${\rm v}({\rm sgn}(I))\leq {\rm v}(I)$.
\item[\rm(iii)] $I$ is unmixed if and only if ${\rm sgn}(I)$ is
unmixed. 
\end{enumerate}
}

In Section~\ref{section-examples}, we give a list of examples 
to illustrate and complement some of our results. The main example
shows the recursive use of the shift and weighted partial polarization operations
to obtain the signature and the full weighted polarization of a
monomial ideal 
(Example~\ref{signature-example}). 

Appendix~\ref{AppendixA} includes a procedure for 
\textit{Macaulay}$2$ \cite{mac2} to compute the signature of a
monomial ideal (Procedure~\ref{procedure-signature}), and a procedure
to examine given families of monomial ideal by computing 
their signature ideals and determining which signature ideals are
Cohen--Macaulay or Gorenstein (Procedure~\ref{procedure-list}). Then,
in Appendix~\ref{AppendixB}, 
using the list of signatures matrices of
\cite[Chapter~4]{moran-tesis}, we give the list of all signature 
Cohen--Macaulay monomial ideals within a certain
range.

For all unexplained
terminology and additional information  we refer to
\cite{herzog-hibi-book,monalg-rev}
(for monomial ideals), and
\cite{BHer,Mats} (for commutative rings).

\section{The depth and regularity of a monomial ideal and its
signature}\label{tight-matrix-section}
Let $I$ be a monomial ideal of $R$ and let 
$A$ be its incidence matrix. In this section, we introduce the shift operation 
$I\mapsto I_{\rm sft}$ and the weighted partial polarization
$I\mapsto I_{\rm pol}$. The shift operation eliminates ``gaps'' in
$I$ using the weighted partial polarization. 
A recursive applications of these
operations give the signature of $I$ and the full weighted
polarization of $I$, respectively. This allows us to show that the
signature preserves the depth and lowers the regularity of $R/I$, and
to compare the minimal free resolutions of $I$ and ${\rm sgn}(I)$. 
For these operations to work, we need to assume that the height of
$I$ is at least $2$. 
To avoid repetitions we continue to use the notation introduced in
Section~\ref{intro-section}.

\subsection{Procedure to transform $A$ into a tight
matrix}\label{tight-matrix-subsection} 
Let $I$ be a monomial ideal of $R$ of height at least $2$ and let 
$A$ be its incidence matrix. Then, each row of $A$ has at least one
zero entry. We give a recursive procedure that transforms $A$ into a tight matrix whose corresponding
monomial ideal is the signature of $I$.  

If $A$ is not a tight matrix pick any row of $A$ that is not tight. For
simplicity of notation assume that the first row of $A$ is not tight.
Then, after listing the monomials of $G(I)$ in ascending order with respect to
the powers of $x_1$, there are integers $p\geq 0$, $q_1\leq\cdots\leq q_s$, and
$k_0,\ldots,k_p$, such that 
$q_1-p\geq 2$, $k_i\geq 1$ for $i=0,\ldots,p$, and we can write 
\begin{align}
G(I)=&\big\{x_1^jx^{\gamma_{i,j}}\mid 0\leq j\leq p,\, 1\leq i\leq k_j,\,
x_1\notin{\rm
supp}(x^{\gamma_{i,j}})\}\textstyle\bigcup\{x_1^{q_i}x^{\epsilon_i}\mid 1\leq i\leq
s,\, x_1\notin{\rm
supp}(x^{\epsilon_i})\big\}\nonumber\\
=&\big\{x^{\gamma_{1,0}},\ldots,x^{\gamma_{k_0,0}},
x_1x^{\gamma_{1,1}},\ldots,x_1x^{\gamma_{k_1,1}},\ldots,
x_1^px^{\gamma_{1,p}},\ldots,x_1^px^{\gamma_{k_p,p}},
x_1^{q_1}x^{\epsilon_1},\ldots,x_1^{q_s}x^{\epsilon_s}\big\}, and \nonumber\\
&G(I)=\big[\textstyle\bigcup_{j=0}^p\big\{x_1^jx^{\gamma_{1,j}},\ldots,x_1^jx^{\gamma_{i,j}},
\ldots,x_1^jx^{\gamma_{k_j,j}}\big\}
\big]
\bigcup\big\{x_1^{q_1}x^{\epsilon_1},\ldots,x_1^{q_s}x^{\epsilon_s}\big\},\label{i-def}
\end{align}
where $x_1\notin{\rm supp}(x^{\gamma_{i,j}})$ for all $i,j$ and
$x_1\notin{\rm supp}(x^{\epsilon_i})$ for all $i$. The existence of
$k_0,\ldots,k_p$ such that $k_i\geq 1$ for all $i$ is
guaranteed by the assumption that 
${\rm ht}(I)\geq 2$. Recalling that 
$q_1-p\geq 2$, we say $I$ has
a \textit{gap} with respect to $x_1$ occurring at 
$x_1^px^{\gamma_{k_p,p}},x_1^{q_1}x^{\epsilon_1}$. The idea is to
remove this gap, using the following procedure, without affecting the depth and the
dimension of $R/I$. If $x^a\in G(I)$ and $x_k\in{\rm supp}(x^a)$, we also say that $I$ has 
a \textit{gap} in $x^a$ at $x_k$ whenever there does not exist 
$x^b\in G(I)$ such that $b_k=a_k-1$.

We now introduce a \textit{local degree reduction} ideal that we call a
\textit{weighted partial polarization} of $I$. 
Consider the monomial ideal $I_{\rm pol}$ obtained from $G(I)$ by adding a new variable
$x_0$ to $R$ and making the substitution $x_1^{q_1-p}\rightarrow x_0$ in 
the monomials
$x_1^{q_1}x^{\epsilon_1},\ldots,x_1^{q_s}x^{\epsilon_s}$, that is, 
$I_{\rm pol}$ is the monomial ideal of $R[x_0]$ given by 
\begin{equation}\label{pol-def}
I_{\rm pol}:=\Big(\big[\textstyle\bigcup_{j=0}^p\big\{x_1^jx^{\gamma_{1,j}},
\ldots,x_1^jx^{\gamma_{i,j}},\ldots, x_1^jx^{\gamma_{k_j,j}}\big\}\big]
\bigcup\big\{x_0x_1^{q_i-q_1+p}x^{\epsilon_i}\big\}_{i=1}^s\Big).
\end{equation}
\quad It is not hard to see that the set of monomials generating $I_{\rm pol}$ is 
the minimal set of generators $G(I_{\rm pol})$ of $I_{\rm pol}$.
Whenever needed, we may assume that $x_0-x_1^{q_1-p}$ is homogeneous by
letting $\deg(x_0)=q_1-p$. To eliminate the gap 
``$x_1^px^{\gamma_{k_p,p}},x_1^{q_1}x^{\epsilon_1}$'' relative to
$x_1$, we consider the ideal $I_{\rm sft}$ of $R$ obtained from $I_{\rm pol}$ by making 
$x_0=x_1$:  
\begin{align}
I_{\rm sft}:&=\Big(\big[\textstyle\bigcup_{j=0}^p\big\{x_1^jx^{\gamma_{1,j}},
\ldots,x_1^jx^{\gamma_{i,j}},\ldots, x_1^jx^{\gamma_{k_j,j}}\big\}\big]
\bigcup\big\{x_1^{q_i-q_1+p+1}x^{\epsilon_i}\big\}_{i=1}^s\Big)\nonumber\\
&=\Big(\big[\textstyle\bigcup_{j=0}^p\big\{x_1^jx^{\gamma_{1,j}},
\ldots,x_1^jx^{\gamma_{i,j}},\ldots, x_1^jx^{\gamma_{k_j,j}}\big\}\big]
\bigcup\big\{x_1^{\ell_i}x^{\epsilon_i}\big\}_{i=1}^s\Big),\label{sft-def}
\end{align}
where $\ell_i=q_i-q_1+p+1$. We call the ideal $I_{\rm sft}$ a \textit{shift} of $I$.  
Note that $I\subset I_{\rm sft}$. The next lemma shows that if $I_{\rm sft}$ has a gap, we can
successively apply the procedure above to obtain a gap free ideal
whose incidence matrix has a tight first row. Then, we apply the same
procedure to any other row to obtain a tight matrix.

\begin{lemma}\label{lemma-termination-procedure} The minimal set of generators of $I_{\rm sft}$ is 
\begin{align*}
G(I_{\rm sft})=&\big[\textstyle\bigcup_{j=0}^p\big\{x_1^jx^{\gamma_{1,j}},
\ldots,x_1^jx^{\gamma_{i,j}},\ldots, x_1^jx^{\gamma_{k_j,j}}\big\}\big]
\bigcup\big\{x_1^{q_i-q_1+p+1}x^{\epsilon_i}\big\}_{i=1}^s\\
&\quad\quad=\big[\textstyle\bigcup_{j=0}^{p+1}\big\{x_1^jx^{\gamma_{1,j}},
\ldots,x_1^jx^{\gamma_{i,j}},\ldots, x_1^jx^{\gamma_{k_j,j}}\big\}\big]
\bigcup\big\{x_1^{q_i'}x^{\epsilon_i'}\big\}_{i=1}^{s'},
\end{align*}
where $p+3\leq q_1'\leq\cdots\leq q_{s'}'$ if $I_{\rm sft}$ has a gap at 
``$x_1^{p+1}x^{\gamma_{k_{p+1},p+1}},x_1^{q_1'}x^{\epsilon_1'}$'' and
$\{x_1^{q_i'}x^{\epsilon_i'}\big\}_{i=1}^{s'}$ is empty if $I_{\rm sft}$ has no
gaps. 
\end{lemma}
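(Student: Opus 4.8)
The plan is to prove the two assertions of the lemma separately: first, that the generators of $I_{\rm sft}$ displayed in Eq.~\eqref{sft-def} are precisely its minimal generators $G(I_{\rm sft})$, equivalently that these monomials form an anti-chain; and second, that after a purely notational relabelling this anti-chain consists of a contiguous block of levels $0,\ldots,p+1$ in the powers of $x_1$ together with a tail $\{x_1^{q_i'}x^{\epsilon_i'}\}_{i=1}^{s'}$ satisfying the stated dichotomy. Throughout I would exploit that $G(I)$ in Eq.~\eqref{i-def} is already an anti-chain, that $x_1\notin{\rm supp}(x^{\gamma_{i,j}})$ and $x_1\notin{\rm supp}(x^{\epsilon_i})$, and that $\ell_i=q_i-q_1+p+1$ is an order-preserving affine function of $q_i$ with $\ell_1=p+1$.

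For assertion (1) I would compare divisibility in the candidate generating set of $I_{\rm sft}$ with divisibility in $G(I)$, case by case. Incomparability of two unshifted monomials $x_1^jx^{\gamma_{a,j}}$, $x_1^{j'}x^{\gamma_{a',j'}}$ (both with $0\le j,j'\le p$) is inherited verbatim from $G(I)$, since these monomials are untouched. For two shifted monomials $x_1^{\ell_a}x^{\epsilon_a}$, $x_1^{\ell_b}x^{\epsilon_b}$, the equivalence $\ell_a\le\ell_b\iff q_a\le q_b$ together with the unchanged exponent vectors $\epsilon_a,\epsilon_b$ shows that any divisibility among them would reproduce a divisibility between $x_1^{q_a}x^{\epsilon_a}$ and $x_1^{q_b}x^{\epsilon_b}$ in $G(I)$, which is excluded. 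The only genuinely new case is a cross pair $x_1^jx^{\gamma_{a,j}}$ versus $x_1^{\ell_b}x^{\epsilon_b}$: a shifted monomial cannot divide an unshifted one because $\ell_b\ge\ell_1=p+1>p\ge j$; and if an unshifted monomial divided a shifted one, then $\gamma_{a,j}\le\epsilon_b$, and since $j\le p<q_1\le q_b$ forces $j\le q_b$ for free, we would get $x_1^jx^{\gamma_{a,j}}\mid x_1^{q_b}x^{\epsilon_b}$ in $G(I)$, again a contradiction. This cross-pair step, which is where the hypothesis $q_1-p\ge2$ (guaranteeing $\ell_1=p+1>p$) does the real work, is the part I expect to be the main obstacle.

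For assertion (2) I would identify the shifted monomials of minimal level: these are exactly the $x_1^{p+1}x^{\epsilon_i}$ with $q_i=q_1$, and since $\ell_1=p+1$ there is at least one, so level $p+1$ is now occupied and the original gap at $x_1^px^{\gamma_{k_p,p}},x_1^{q_1}x^{\epsilon_1}$ is filled. Relabelling these lowest shifted monomials as $x_1^{p+1}x^{\gamma_{i,p+1}}$ ($1\le i\le k_{p+1}$, with $k_{p+1}\ge1$) absorbs them into the contiguous block $\bigcup_{j=0}^{p+1}$, and the remaining shifted monomials (those with $q_i>q_1$, all of level $\ell_i\ge p+2$) are relabelled as the tail $\{x_1^{q_i'}x^{\epsilon_i'}\}_{i=1}^{s'}$. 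If every $q_i$ equals $q_1$, the tail is empty and the first row is tight, which is the ``no gaps'' alternative; otherwise the tail is nonempty, and if $I_{\rm sft}$ has a gap immediately above the filled block then, by the definition of a gap now applied at level $p+1$, level $p+2$ must be empty, forcing $q_1'\ge p+3$.

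Finally, I would record that this description is exactly what makes the recursive procedure terminate, as the lemma's name suggests: each shift fixes the levels $0,\ldots,p$ and translates the entire $\epsilon$-block downward by $q_1-p-1\ge1$, so the top $x_1$-exponent drops from $q_s$ to $\ell_s=q_s-(q_1-p-1)<q_s$; being a nonnegative integer, it can decrease only finitely often, after which the first row of the incidence matrix is tight and the same argument can be run on the next non-tight row.
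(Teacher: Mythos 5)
Your proposal is correct and takes essentially the same approach as the paper: both reduce the lemma to showing that the displayed generators of $I_{\rm sft}$ form an anti-chain by lifting any divisibility back to one in $G(I)$, your two cross-pair cases being exactly the paper's Cases (A) and (B) (the paper recovers the $G(I)$-divisibility by multiplying by $x_1^{q_1-p-1}$, you by the equivalent observation $j\leq p<q_1\leq q_b$), while your treatment of the like-type pairs, the relabelling, and termination fills in details the paper leaves as ``easy'' or implicit. One small correction: the inequality $\ell_1=p+1>p$ used in your cross-pair step follows from the definition $\ell_i=q_i-q_1+p+1$ alone, so the hypothesis $q_1-p\geq 2$ is not what makes that step work---it is only needed so that the shift is nontrivial, i.e., that $q_1-p-1\geq 1$ and a genuine gap is being removed.
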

\begin{proof} We need only show that the monomials that generate $I_{\rm sft}$
form an anti-chain, that is, they are non-comparable with respect to
the poset $\mathbb{M}_n$ of monomials of $R$ under divisibility. 

Case (A): $x_1^jx^{\gamma_{i,j}}=(x_1^{q_k-q_1+p+1}x^{\epsilon_k})x^c$
for some $x^c$.  
Then $j\geq q_k-q_1+p+1\geq p+1$, a contradiction since $0\leq j\leq p$.

Case (B):
$x_1^{q_k-q_1+p+1}x^{\epsilon_k}=(x_1^jx^{\gamma_{i,j}})x^c$ for some
$x^c$. Then,
$x_1^{q_k}x^{\epsilon_k}=(x_1^jx^{\gamma_{i,j}})x^c(x_1^{q_1-p-1})$, a
contradiction since $G(I)$ is an anti-chain of $\mathbb{M}_n$. 

The remaining cases are also easy to show using that $G(I)$ is an
anti-chain of $\mathbb{M}_n$.  
\end{proof}

\begin{proposition}\label{carlitos-vila} Let $I$ be a monomial ideal of height at 
least $2$ and let $A$ be its incidence matrix. By recursively 
applying the shift operation $I\mapsto I_{\rm sft}$ to all rows of
$A$, we
obtain the signature ${\rm sgn}(I)$ of $I$ and the signature ${\rm
sgn}(A)$ of $A$. 
\end{proposition}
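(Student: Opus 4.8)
The plan is to prove the proposition in three stages: first show that the shift operation terminates on a single fixed row, then show that shifts performed on distinct rows do not interfere, and finally identify the resulting tight matrix with ${\rm sgn}(A)$.

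First I would fix one row of $A$, say the first, and iterate the shift $I\mapsto I_{\rm sft}$ with respect to $x_1$. By Lemma~\ref{lemma-termination-procedure}, after each application the contiguous block of $x_1$-exponents $\{0,1,\ldots,p\}$ is enlarged (at least to $\{0,1,\ldots,p+1\}$), while every gapped exponent $q_i$ is replaced by $\ell_i=q_i-q_1+p+1$. Since $q_1-p\geq 2$ gives $q_1-p-1\geq 1$, each of these exponents strictly decreases; in particular the top $x_1$-degree $q_s$ of $G(I)$ drops to $\ell_s\leq q_s-1$. As $x_1$-degrees are nonnegative integers, the iteration must halt, and by the second alternative of Lemma~\ref{lemma-termination-procedure} it halts precisely when the first row has no gap, i.e., when that row is tight.

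Next I would observe that a shift taken with respect to $x_1$ alters only the first coordinate of the exponent vectors of $G(I)$: the supports $x^{\gamma_{i,j}}$ and $x^{\epsilon_i}$ are untouched, so every other row of the incidence matrix is left unchanged. Consequently, once a given row has been made tight, any subsequent shift performed on a different row keeps it tight. One may therefore process the $n$ rows one after another; after the $n$-th pass every row is tight, and the incidence matrix $A'$ of the resulting ideal is a tight matrix with the same number of columns $q$ as $A$, since by Lemma~\ref{lemma-termination-procedure} the generating set remains an anti-chain of cardinality $q$.

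Finally, and this is the real content, I would check that the tight matrix $A'$ is exactly ${\rm sgn}(A)$, arguing row by row since both the shift and the signature act on each row independently. Because ${\rm ht}(I)\geq 2$, each row has a zero entry, so its distinct ordered values are $0=m_0<m_1<\cdots<m_r$ with contiguous prefix $m_0=0,\ldots,m_p=p$ and first gap between $m_p$ and $m_{p+1}=q_1$. A single shift lowers every value $\geq q_1$ by the constant $q_1-p-1$; this map is order-preserving and injective on the set of distinct values, fixes the partition of the generators into classes sharing a common $x_1$-exponent, and replaces the first gap by a contiguous step while moving all later values down by one rank. Iterating closes the gaps from the bottom, and since the prefix always begins at $0$, the process terminates with distinct values $\{0,1,\ldots,r\}$ in which each original value $m_t$ has been sent to its rank $t$. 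This is exactly the relabeling defining ${\rm sgn}$ on that row, whence $A'={\rm sgn}(A)$ and the associated ideal is ${\rm sgn}(I)$. The main obstacle is precisely this last step: one must verify that the successive \emph{local} gap-removals compose to the single \emph{global} rank-relabeling that defines the signature, i.e., that closing one gap at a time neither merges two originally distinct exponents nor reorders them. The decoupling of the rows in the first two steps is comparatively routine once one notes that a shift with respect to $x_1$ acts only in the first coordinate.
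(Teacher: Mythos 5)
Your proposal is correct and follows essentially the same route as the paper's proof, which likewise invokes the procedure of Subsection~\ref{tight-matrix-subsection} together with Lemma~\ref{lemma-termination-procedure} to eliminate gaps recursively and concludes, using that each row of $A$ has a zero entry, that the process terminates exactly at ${\rm sgn}(I)$. The only difference is one of detail: you make explicit three points the paper leaves implicit, namely termination via the strict decrease of the exponents $q_i\mapsto q_i-(q_1-p-1)$, the non-interference between shifts on distinct rows, and the verification that the composite of the local gap-closings is order-preserving and injective on the distinct row entries and therefore coincides with the global rank relabeling $m_t\mapsto t$ defining the signature.
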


\begin{proof} By the procedure above and
Lemma~\ref{lemma-termination-procedure}, we can recursively eliminate all gaps from
$I$ and since each row of $A$ has at least one zero entry the procedure has
to end precisely when we reach ${\rm sgn}(I)$, i.e., when all rows are tight.
\end{proof}

\subsection{The signature invariants}

On the ring theory side, we show that the shift and signature
operations $I\mapsto
I_{\rm sft}(I)$ and $I\mapsto {\rm sgn}(I)$ preserve the depth, the
Krull dimension, and some of the algebraic properties of $R/I$.

\begin{definition}\rm An ideal obtained by recursively applying the 
operation $I\mapsto I_{\rm pol}$ is called a \textit{weighted
polarization} of $I$. A \textit{weighted polarization} is
\textit{full} if the corresponding recursive operation $I\mapsto
I_{\rm sft}$ ends
at the signature of $I$.
\end{definition}

The next lemma is not hard to prove.

\begin{lemma}\label{quotient-I-xi}
Let $I\subset R$ be a monomial ideal such that $G(I)$ has the form
$$
G(I)=\{x^{\gamma_1},\ldots,x^{\gamma_r},x_1^{d_1}x^{\delta_1},\ldots,
x_1^{d_t}x^{\delta_t}\},
$$
where $x_1\notin{\rm supp}(x^{\gamma_i})$, $x_1\notin{\rm
supp}(x^{\delta_j})$ for all $i,j$, $r\geq 1$, $t\geq 1$, and
$1=d_1\leq d_2\leq\cdots\leq d_t$. Then, the minimal generating set of
$(I\colon x_1)$ is given by 
$$
G(I\colon x_1)=\{x^{\gamma_i}\mid x^{\delta_j}\mbox{ does not divides
}x^{\gamma_i}\mbox{ when
}d_j=1\}\textstyle\bigcup\{x_1^{d_i-1}x^{\delta_i}\}_{i=1}^t.
$$
\end{lemma}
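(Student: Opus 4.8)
The plan is to use the standard formula for the colon of a monomial ideal by a monomial and then isolate the minimal generators by a divisibility analysis. Writing $G(I)=\{x^{\gamma_1},\ldots,x^{\gamma_r},x_1^{d_1}x^{\delta_1},\ldots,x_1^{d_t}x^{\delta_t}\}$, recall that for a monomial ideal $(u_1,\ldots,u_m)$ and a monomial $w$ one has $(I\colon w)=(u_1/\gcd(u_1,w),\ldots,u_m/\gcd(u_m,w))$. Applying this with $w=x_1$: since $x_1\notin{\rm supp}(x^{\gamma_i})$, each $\gcd(x^{\gamma_i},x_1)=1$ and these generators are unchanged; since $d_i\geq 1$ and $x_1\notin{\rm supp}(x^{\delta_i})$, each $\gcd(x_1^{d_i}x^{\delta_i},x_1)=x_1$, so these generators become $x_1^{d_i-1}x^{\delta_i}$. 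Thus $(I\colon x_1)$ is generated by $\{x^{\gamma_i}\}_{i=1}^r\cup\{x_1^{d_i-1}x^{\delta_i}\}_{i=1}^t$, and it remains only to extract the minimal generators.

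First I would verify that the family $\{x_1^{d_i-1}x^{\delta_i}\}_{i=1}^t$ is an anti-chain: if $x_1^{d_i-1}x^{\delta_i}$ divided $x_1^{d_j-1}x^{\delta_j}$ with $i\neq j$, then $d_i\leq d_j$ and $x^{\delta_i}$ divides $x^{\delta_j}$, whence $x_1^{d_i}x^{\delta_i}$ divides $x_1^{d_j}x^{\delta_j}$, contradicting that $G(I)$ is an anti-chain. Likewise $\{x^{\gamma_i}\}_{i=1}^r$ is an anti-chain since it is a subset of $G(I)$. Hence any redundancy in the generating set above can only arise from cross-divisibilities between the two families.

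Next I would analyze those cross relations. A generator $x^{\gamma_i}$ cannot divide any $x_1^{d_j-1}x^{\delta_j}$: since $x_1\notin{\rm supp}(x^{\gamma_i})$, such a division would force $x^{\gamma_i}$ to divide $x^{\delta_j}$, hence $x^{\gamma_i}$ to divide $x_1^{d_j}x^{\delta_j}$, again contradicting the anti-chain property of $G(I)$. Consequently every $x_1^{d_i-1}x^{\delta_i}$ is a minimal generator. Conversely, $x_1^{d_j-1}x^{\delta_j}$ divides $x^{\gamma_i}$ exactly when $d_j-1=0$ and $x^{\delta_j}$ divides $x^{\gamma_i}$ (once more because $x_1$ is absent from $x^{\gamma_i}$), that is, precisely when $d_j=1$ and $x^{\delta_j}$ divides $x^{\gamma_i}$. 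Therefore $x^{\gamma_i}$ survives as a minimal generator if and only if no $x^{\delta_j}$ with $d_j=1$ divides it. Combining the two observations yields exactly the claimed description of $G(I\colon x_1)$.

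The argument is essentially a bookkeeping exercise, and the only step requiring care is this cross-divisibility analysis, where in each putative divisibility one must reintroduce the single factor of $x_1$ that was stripped off in forming the colon, thereby reducing the relation to one inside $G(I)$ where the anti-chain hypothesis applies. Setting up that reduction correctly is the crux of the proof; once it is in place, the identification of the minimal generators is immediate.
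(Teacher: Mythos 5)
Your proof is correct and complete. The paper itself supplies no argument for this lemma (it is introduced only with ``The next lemma is not hard to prove''), and your route --- the standard formula $(I\colon w)=\bigl(u_1/\gcd(u_1,w),\ldots,u_m/\gcd(u_m,w)\bigr)$ followed by the cross-divisibility analysis that reinserts the stripped factor of $x_1$ so every putative divisibility reduces to one inside the anti-chain $G(I)$ --- is exactly the routine verification the authors evidently intend, carried out with all cases (within each family and across them) properly checked.
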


\begin{lemma}\label{I-Ipol-x0} 
Let $I\subset R$ be a monomial
ideal of height at least $2$ and let $I_{\rm pol}$ be the weighted partial polarization of
Eq.~\eqref{pol-def}. Then, there is a ring isomorphism
$$
\overline{\varphi}\colon R/I\longrightarrow R[x_0]/(I_{\rm
pol},x_0-x_1^{q_1-p}), \quad \overline{f}\longmapsto\widetilde{f},
$$
that is also a graded isomorphism of $R$-modules, where
$\deg(x_0)=q_1-p$. In particular,    
$${\rm reg}(R/I)={\rm reg}_R(R[x_0]/(I_{\rm pol},x_0-x_1^{q_1-p})).$$
\end{lemma}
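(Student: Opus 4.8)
The plan is to realize both quotient rings as quotients of the larger ring $R[x_0]$ modulo the relation $x_0-x_1^{q_1-p}$, and to invoke the standard fact that adjoining a variable and then setting it equal to a polynomial in the remaining variables recovers the original ring. Concretely, I would introduce the $K$-algebra homomorphism $\psi\colon R[x_0]\to R$ determined by $\psi(x_0)=x_1^{q_1-p}$ and $\psi(x_i)=x_i$ for $i\geq 1$. Its kernel is $(x_0-x_1^{q_1-p})$, so it induces a graded isomorphism $\bar\psi\colon R[x_0]/(x_0-x_1^{q_1-p})\to R$ once we set $\deg(x_0)=q_1-p$, which makes $x_0-x_1^{q_1-p}$ homogeneous (legitimate since $q_1-p\geq 2>0$). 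Its inverse is the map $\alpha$ obtained by composing the inclusion $R\hookrightarrow R[x_0]$ with the quotient by $(x_0-x_1^{q_1-p})$; that $\bar\psi\circ\alpha=\mathrm{id}_R$ and $\alpha\circ\bar\psi=\mathrm{id}$ is immediate on generators.

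The heart of the argument is checking that these maps carry the two ideals onto each other. Applying $\psi$ to the generators of $I_{\rm pol}$ in Eq.~\eqref{pol-def}, the monomials $x_1^jx^{\gamma_{i,j}}$ are fixed, while each $x_0x_1^{q_i-q_1+p}x^{\epsilon_i}$ is sent to $x_1^{q_1-p}x_1^{q_i-q_1+p}x^{\epsilon_i}=x_1^{q_i}x^{\epsilon_i}$; here I use that $q_i\geq q_1$, so the exponent $q_i-q_1+p\geq 0$ is a genuine nonnegative integer. Comparing with Eq.~\eqref{i-def}, the image set is exactly $G(I)$, whence $\psi(I_{\rm pol})=I$ and $\bar\psi$ descends to a graded isomorphism $R[x_0]/(I_{\rm pol},x_0-x_1^{q_1-p})\to R/I$. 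Conversely, I would verify that each generator of $I$ lies in $(I_{\rm pol},x_0-x_1^{q_1-p})$: the $x_1^jx^{\gamma_{i,j}}$ are already among the generators of $I_{\rm pol}$, while the identity $x_1^{q_i}x^{\epsilon_i}-x_0x_1^{q_i-q_1+p}x^{\epsilon_i}=-(x_0-x_1^{q_1-p})x_1^{q_i-q_1+p}x^{\epsilon_i}$ exhibits $x_1^{q_i}x^{\epsilon_i}$ as a sum of an element of $I_{\rm pol}$ and a multiple of $x_0-x_1^{q_1-p}$. Thus $\alpha(I)=0$, so $\alpha$ descends to the stated map $\overline{\varphi}\colon R/I\to R[x_0]/(I_{\rm pol},x_0-x_1^{q_1-p})$, $\overline{f}\mapsto\widetilde{f}$, and by construction it is the inverse of the descent of $\bar\psi$; hence $\overline{\varphi}$ is an isomorphism.

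Finally, regarding $R[x_0]/(I_{\rm pol},x_0-x_1^{q_1-p})$ as an $R$-module via $\varphi$, the map $\overline{\varphi}$ is $R$-linear, since $\overline{\varphi}(r\cdot\overline{f})=\widetilde{rf}=\varphi(r)\widetilde{f}=r\cdot\overline{\varphi}(\overline{f})$, and it preserves degrees for the chosen grading; so $\overline{\varphi}$ is a graded isomorphism of $R$-modules. As Castelnuovo--Mumford regularity is invariant under graded $R$-module isomorphism, the equality ${\rm reg}(R/I)={\rm reg}_R(R[x_0]/(I_{\rm pol},x_0-x_1^{q_1-p}))$ follows immediately. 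I expect no serious obstacle here; the only points demanding care are fixing $\deg(x_0)=q_1-p$ so that the relation is homogeneous, and the bookkeeping that $\psi$ sends $G(I_{\rm pol})$ onto $G(I)$ rather than merely into $I$, although for the ideal equality $\psi(I_{\rm pol})=I$ the weaker statement that the images generate $I$ already suffices.
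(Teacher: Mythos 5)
Your proposal is correct and follows essentially the same route as the paper: the paper's map $\overline{\varphi}$ is your $\alpha$, the kernel computation uses the same binomial identity $x_1^{q_i}x^{\epsilon_i}-x_0x_1^{q_i-q_1+p}x^{\epsilon_i}=x_1^{q_i-q_1+p}x^{\epsilon_i}(x_1^{q_1-p}-x_0)$, and the paper's injectivity step ``making $x_0=x_1^{q_1-p}$'' is exactly your evaluation map $\psi$. If anything, your formulation via $\ker(\psi)=(x_0-x_1^{q_1-p})$ and $\psi(I_{\rm pol})=I$ is a slightly tidier packaging of the paper's final step, which only sketches why $\ker(\varphi)\subset I$.
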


\begin{proof} There is a graded homomorphism of rings 
$\varphi\colon R\longrightarrow R[x_0]/(I_{\rm
pol},x_0-x_1^{q_1-p})$, $f\longmapsto\widetilde{f}$, 
that is also a graded homomorphism of $R$-modules. This map is onto
since $\varphi(x_1^{q_1-p})=\widetilde{x_0}$. We let $P:=(I_{\rm
pol},x_0-x_1^{q_1-p})$. Note that $I\subset{\rm ker}(\varphi)$. Indeed, 
take $f\in G(I)$. If $f=x_1^jx^{\gamma_{i,j}}$, then $f\in I_{\rm
pol}$ and $\varphi(f)=\widetilde{0}$. If 
$f=x_1^{q_i}x^{\epsilon_i}$, then from the equality 
$$
x_1^{q_i}x^{\epsilon_i}-x_0x_1^{q_i-q_1+p}x^{\epsilon_i}=
x_1^{q_i-q_1+p}x^{\epsilon_i}(x_1^{q_1-p}-x_0)\in P,
$$
one has 
$\varphi(f)=\widetilde{f}=x_0x_1^{q_i-q_1+p}+P=\widetilde{0}$. Thus, 
$I\subset{\rm ker}(\varphi)$, and consequently $\varphi$ 
induces an epimorphism $\overline{\varphi}\colon R/I\rightarrow
R[x_0]/P$ such that 
$\overline{\varphi}(\overline{f})=\widetilde{f}$. To show that
$\overline{\varphi}$ is an isomorphism, we need
only show that ${\rm ker}(\varphi)\subset I$. Take $f\in{\rm
ker}(\varphi)$. Then, we can write
$f=h_1(x_0-x_1^{q_1-p})+h_2x_1^jx^{\gamma_{i,j}}$ or 
$f=h_3(x_0-x_1^{q_1-p})+h_4(x_0x_1^{q_i-q_1+p}x^{\epsilon_i})$. Hence, making
$x_0=x_1^{q_1-p}$, gives $f\in I$. 
\end{proof}

\begin{remark}\label{I-Ipol-x0-rem} 
Letting $\deg(x_0)=q_1-p$. There is a ring isomorphism
$$
\phi\colon R\longrightarrow R[x_0]/(x_0-x_1^{q_1-p}), \quad 
f\longmapsto\overline{f},
$$
that is also a graded isomorphism of $R$-modules, 
$0={\rm reg}(R)={\rm reg}_R(R[x_0]/(x_0-x_1^{q_1-p}))$, and the
regularity of $R[x_0]/(x_0-x_1^{q_1-p})$, as an $R[x_0]$-module, is 
$q_1-p-1$.
\end{remark}

\begin{lemma}\label{depth-dim-inv} Let $I\subset R$ be a monomial
ideal of height at least $2$ and let $I_{\rm pol}$ and $I_{\rm sft}$ be the ideals of
Eqs.~\eqref{pol-def} and \eqref{sft-def}, respectively. The following hold. 
\begin{enumerate} 
\item[\rm(a)] The pure binomials $x_0-x_1^{q_1-p}$ and $x_0-x_1$ are
regular on $R[x_0]/I_{\rm pol}$, that is, they are non-zero divisors of
$R[x_0]/I_{\rm pol}$.
\item[\rm(b)] $(R[x_0]/I_{\rm pol})/(x_0-x_1^{q_1-p})=R/I$ and
$(R[x_0]/I_{\rm pol})/(x_0-x_1)=R/I_{\rm sft}$.
\item[\rm(c)] ${\rm depth}(R/I)={\rm depth}(R/I_{\rm sft})$ and
$\dim(R/I)=\dim(R/I_{\rm sft})$.
\item[\rm(d)] $R/I$ is Cohen--Macaulay if and only if $R/I_{\rm sft}$ 
is Cohen--Macaulay.
\item[\rm(e)] $R/I$ is Gorenstein if and only if $R/I_{\rm sft}$ is
Gorenstein.
\item[\rm(f)] If $p\geq 1$, then $(I_{\rm sft}\colon x_1)=(I\colon
x_1)_{\rm sft}$. If $p=0$, then $(I\colon x_1^{q_1-1})=I_{\rm sft}$.
\end{enumerate}
\end{lemma}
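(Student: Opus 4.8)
The plan is to prove (a) and (b) first and then deduce (c)--(e) formally, treating (f) as an independent monomial computation. Part (b) is immediate from Lemma~\ref{I-Ipol-x0}: $(R[x_0]/I_{\rm pol})/(x_0-x_1^{q_1-p})=R[x_0]/(I_{\rm pol},x_0-x_1^{q_1-p})\cong R/I$. For the second identity I would repeat that argument with $x_1$ in place of $x_1^{q_1-p}$; substituting $x_0=x_1$ in \eqref{pol-def} fixes each $x_1^jx^{\gamma_{i,j}}$ and sends $x_0x_1^{q_i-q_1+p}x^{\epsilon_i}$ to $x_1^{\ell_i}x^{\epsilon_i}$, so the substitution carries the generators of $I_{\rm pol}$ exactly onto $G(I_{\rm sft})$ of \eqref{sft-def}, giving $(R[x_0]/I_{\rm pol})/(x_0-x_1)\cong R/I_{\rm sft}$.

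The substance is (a). Put $M_0:=(\{x_1^jx^{\gamma_{i,j}}\})$ and $N:=(\{x_1^{q_i-q_1+p}x^{\epsilon_i}\})$ in $R$, so that a monomial $x_0^k\mu$ with $\mu\in R$ lies in $I_{\rm pol}$ iff $\mu\in M_0$ (for $k=0$) or $\mu\in M_0+N$ (for $k\ge 1$); hence $f=\sum_k x_0^kf_k$ with $f_k\in R$ lies in $I_{\rm pol}$ iff $f_0\in M_0$ and $f_k\in M_0+N$ for $k\ge 1$. Writing a binomial as $\theta=x_0-g$ with $g$ a power of $x_1$ and expanding $\theta f=\sum_k x_0^k(f_{k-1}-gf_k)$ (with $f_{-1}=0$), the hypothesis $\theta f\in I_{\rm pol}$ gives $gf_0\in M_0$ and $f_{k-1}-gf_k\in M_0+N$ for $k\ge1$; reading these from the top index down forces $f_k\in M_0+N$ for all $k$ (since $f_{k-1}=(f_{k-1}-gf_k)+gf_k$ with both summands in $M_0+N$). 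It remains to upgrade $f_0\in M_0+N$, $gf_0\in M_0$ to $f_0\in M_0$. Split $f_0=u+w$ with $u\in M_0$ and $w$ a sum of monomials lying in $N\setminus M_0$; then $wg\in M_0$. For any monomial $m$ of $w$, membership in $N$ forces its $x_1$-exponent to be at least the exponent $q_i-q_1+p\ge p$ of a generator of $N$ dividing it, while $mg\in M_0$ is divisible by some $x_1^{j_0}x^{\gamma_{i_0,j_0}}$ with $j_0\le p$ whose $x_1$-free part divides $m$, so $m\notin M_0$ forces that same exponent to be $<j_0\le p$. This contradiction gives $w=0$, whence $f_0\in M_0$, $f\in I_{\rm pol}$, and $\theta$ is a nonzerodivisor.

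Granting (a) and (b), the homological parts are formal. Each binomial is a homogeneous nonzerodivisor of positive degree in a suitable grading ($\deg x_0=q_1-p$ for $x_0-x_1^{q_1-p}$, the standard grading for $x_0-x_1$) lying in the graded maximal ideal, so cutting by it lowers both ${\rm depth}$ and $\dim$ by one; with $M:=R[x_0]/I_{\rm pol}$ and (b) this yields ${\rm depth}(R/I)={\rm depth}(M)-1={\rm depth}(R/I_{\rm sft})$ and the same for $\dim$, which is (c). Since passing to a quotient by a nonzerodivisor preserves and reflects the Cohen--Macaulay and Gorenstein properties \cite{BHer}, $R/I$ enjoys either property iff $M$ does iff $R/I_{\rm sft}$ does, giving (d) and (e).

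Part (f) is a computation with Lemma~\ref{quotient-I-xi}. For $p\ge1$, both $I$ and $I_{\rm sft}$ have a minimal generator of $x_1$-degree $1$, and that lemma shows colon by $x_1$ drops every positive $x_1$-exponent by one while shifting the gap data accordingly, so $(I\colon x_1)$ again has a gap, one level lower, whose shift matches $(I_{\rm sft}\colon x_1)$ generator by generator. For $p=0$ the low generators are $x_1$-free and the tail exponents satisfy $q_1\le\cdots\le q_s$ with $q_1\ge2$; a monomial $\mu=x_1^a\mu'$ ($x_1\nmid\mu'$) satisfies $\mu x_1^{q_1-1}\in I$ precisely when some $x^{\gamma_{i,0}}\mid\mu$, or $x^{\epsilon_i}\mid\mu'$ and $a\ge q_i-q_1+1$, i.e. when $\mu\in I_{\rm sft}$, giving $(I\colon x_1^{q_1-1})=I_{\rm sft}$. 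I expect (a) to be the main obstacle, specifically the exponent dichotomy above: the inequality $q_i-q_1+p\ge p$, which records that the tail lies beyond the gap, is exactly what makes the two bounds on the $x_1$-exponent of $m$ incompatible, and getting that bookkeeping right is the crux.
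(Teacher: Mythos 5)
Your proposal is correct. Parts (b)--(f) follow essentially the paper's own route: (b) via Lemma~\ref{I-Ipol-x0} together with the evaluation $x_0\mapsto x_1$, (c)--(e) by cutting $M=R[x_0]/I_{\rm pol}$ by the two homogeneous nonzerodivisors (the paper invokes \cite[Lemma~2.3.10]{monalg-rev} and the graded version of \cite[Proposition~3.1.19(b)]{BHer} for exactly the formal deductions you describe), and (f) by the same computation with Lemma~\ref{quotient-I-xi} --- though note that for $p\geq 1$ the paper writes out $G(I\colon x_1)$, $(I\colon x_1)_{\rm sft}$ and $G(I_{\rm sft}\colon x_1)$ explicitly and matches them via the reindexing $j-1=\ell$; your one-sentence version would need that bookkeeping spelled out, including the check that the set $\Gamma$ of surviving $x_1$-free generators agrees on both sides (it does, since the $x_1$-degree-one generators are unchanged and $\ell_i\geq p+1\geq 2$). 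Where you genuinely diverge is (a). The paper argues through associated primes: a zero divisor lies in some $\mathfrak{p}\in{\rm Ass}(I_{\rm pol})$, a face ideal of the form $(I_{\rm pol}\colon h)$ for a monomial $h\notin I_{\rm pol}$; then $x_0,x_1\in\mathfrak{p}$ force $x_0h,\,x_1h\in I_{\rm pol}$, and a short case analysis on which generators divide these yields the contradictions $j'\geq p+1$ and $x_0\in{\rm supp}(x^{c_2})$. You instead expand $f=\sum_k x_0^kf_k$, characterize membership in $I_{\rm pol}$ componentwise via the ideals $M_0$ and $N$, run a top-down induction on the $x_0$-components of $(x_0-g)f$, and close with the exponent dichotomy: a monomial of $N\setminus M_0$ has $x_1$-exponent $a\geq q_i-q_1+p\geq p$, while $mg\in M_0$ with $m\notin M_0$ forces $a<j_0\leq p$. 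Both arguments turn on the same combinatorial fact --- the tail generators of $I_{\rm pol}$ carry $x_1$-exponent at level $\geq p$ while the remaining generators have $x_1$-degree $\leq p$ --- but yours is more elementary (no ${\rm Ass}$ machinery, no witness monomial, no appeal to the face-ideal structure of associated primes of monomial ideals) and proves the slightly stronger uniform statement that $x_0-x_1^e$ is regular on $R[x_0]/I_{\rm pol}$ for \emph{every} $e\geq 1$, which handles both binomials at once; the paper's version is shorter because reducing to a single monomial $h$ turns everything into two divisibility checks.
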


\begin{proof} (a) We argue by contradiction assuming that
$x_0-x_1^{q_1-p}$ (resp. $x_0-x_1$) is a zero divisor of $R[X_0]/I_{\rm pol}$.
Then, $x_0-x_1^{q_1-p}$ (resp. $x_0-x_1$) belongs to an associated 
prime ${\mathfrak p}$ of $I_{\rm pol}$ and 
we can write ${\mathfrak p}=(I_{\rm pol}\colon h)$, for some monomial $h=x^a$
in $R[x_0]$. Note that $h\notin I_{\rm pol}$. Since 
${\mathfrak p}$ is a face ideal generated by a set of variables of
$R[x_0]$ and $x_0-x_1^{q_1-p}\in\mathfrak{p}$, 
it follows readily that $x_i\in\mathfrak{p}$ for $i=0,1$, and one has
$x_ih\in I_{\rm pol}$ for $i=0,1$. 
Therefore, from Eq.~\eqref{pol-def}, we can
write
$$
\begin{array}{ccc}
x_0h=\begin{cases}
{\rm(i)}\ x_1^jx^{\gamma_{i,j}}x^{c_1},\, 0\leq j\leq p,\mbox{ or}& \\
{\rm(ii)}\ x_0x_1^{q_i-q_1+p}x^{\epsilon_i}x^{c_2}, &
\end{cases}
 &\mbox{ and }\ & 
x_1h=\begin{cases}
{\rm(iii)}\ x_1^{j'}x^{\gamma_{i',j'}}x^{c_3},\, 0\leq j'\leq p,\mbox{ or}& \\
{\rm(iv)}\ x_0x_1^{q_k-q_1+p}x^{\epsilon_k}x^{c_4}, &
\end{cases}
\end{array}
$$
for some $x^{c_i}\in R[x_0]$, $i=1,\ldots,4$. Clearly (i) cannot occur
since $h\notin I_{\rm pol}$. Then (ii) holds and
$h=x_1^{q_i-q_1+p}x^{\epsilon_i}x^{c_2}$. Note also that
$x_0\notin{\rm supp}(x^{c_2})$, because otherwise $h\in I_{\rm pol}$, a
contradiction. There are two cases to consider.

Case (A): Assume (ii) and (iii) hold. Then
\begin{equation}\label{sep2-25}
x_1^{j'}x^{\gamma_{i',j'}}x^{c_3}\stackrel{\rm(iii)}{=}x_1h
\stackrel{\rm(ii)}{=}x_1(x_1^{q_i-q_1+p}x^{\epsilon_i}x^{c_2}),
\end{equation}
where $0\leq j'\leq p$. 
If $x_1\in {\rm supp}(x^{c_3})$, then from the first equality of
Eq.~\eqref{sep2-25} $h\in I_{\rm pol}$, a
contradiction. Thus, $x_1\notin {\rm supp}(x^{c_3})$. 
Hence, comparing powers of $x_1$ in Eq.~\eqref{sep2-25}, we get 
$j'\geq 1+q_i-q_1+p\geq p+1$, a contradiction.

Case (B): Assume (ii) and (iv) hold. Then
$$
x_0x_1^{q_k-q_1+p}x^{\epsilon_k}x^{c_4}\stackrel{\rm(iv)}{=}x_1h   
\stackrel{\rm(ii)}{=}x_1(x_1^{q_i-q_1+p}x^{\epsilon_i}x^{c_2}),
$$
and consequently $x_0$ divides $x^{c_2}$. Recalling that 
$x_0\notin{\rm supp}(x^{c_2})$, we get a contradiction.

(b) Letting $f=x_0-x_1^{q_1-p}$ and $g=x_0-x_1$, one has
\begin{align*}
&(R[x_0]/I_{\rm pol})/(f)=(R[x_0]/I_{\rm pol})/((I_{\rm pol}+(f))/I_{\rm pol})=R[x_0]/(I_{\rm pol}+(f))
=R/I,\\
&(R[x_0]/I_{\rm pol})/(g)=(R[x_0]/I_{\rm pol})/((I_{\rm pol}+(g))/I_{\rm pol})=R[x_0]/(I_{\rm pol}+(g))
=R/I_{\rm sft}.
\end{align*}

(c) Letting $M=R[x_0]/I_{\rm pol}$, $f=x_0-x_1^{q_1-p}$, and $g=x_0-x_1$, by
part (a), $f$ and $g$ are regular on $M$. Then, by 
\cite[Lemma~2.3.10]{monalg-rev}, we get 
\begin{align*}
&{\rm depth}(M/fM)={\rm depth}(M)-1\mbox{ and }\dim(M/fM)=\dim(M)-1,\\
&{\rm depth}(M/gM)={\rm depth}(M)-1\mbox{ and }\dim(M/gM)=\dim(M)-1.
\end{align*}
\quad Therefore, by part (b), we obtain
\begin{align*}
&{\rm depth}(R/I)={\rm depth}(M/fM)={\rm depth}(M/gM)={\rm
depth}(R/I_{\rm sft}),\\
&\dim(R/I)=\dim(M/fM)=\dim(M/gM)=\dim(R/I_{\rm sft}).
\end{align*}
\quad (d) It follows at once from part (c) by recalling 
that $R/I$ is Cohen--Macaulay if and only if ${\rm
depth}(R/I)=\dim(R/I)$ \cite[Definition 2.3.8]{monalg-rev}.

(e) Letting $\deg(x_0)=q_1-p$, we regard $f=x_0-x_1^{q_1-p}$ as a
homogeneous element of $R[x_0]$. Then, the result follows from (a) and
(b), and 
the fact that $M$ is Gorenstein if and only if $M/fM=R/I$ (resp.
$M/gM=R/I_{\rm sft}$) 
is Gorenstein. This fact follows from the graded version 
of \cite[Proposition~3.1.19(b)]{BHer}.

(f) Assuming that $p\geq 1$, we now show the equality $(I_{\rm sft}\colon x_1)=(I\colon
x_1)_{\rm sft}$. By Eq.~\eqref{i-def} and Lemma~\ref{quotient-I-xi}, we obtain
\begin{align}
G(I)&=\big[\textstyle\bigcup_{j=0}^p\big\{x_1^jx^{\gamma_{1,j}},\ldots,x_1^jx^{\gamma_{i,j}},
\ldots,x_1^jx^{\gamma_{k_j,j}}\big\}
\big]
\bigcup\big\{x_1^{q_i}x^{\epsilon_i}\big\}_{i=1}^s,\\
G(I\colon
x_1)&=\Gamma\textstyle\bigcup\big[\textstyle\bigcup_{\ell=0}^{p-1}
\big\{x_1^\ell x^{\gamma_{1,\ell+1}},\ldots,x_1^\ell 
x^{\gamma_{i,\ell+1}},\ldots,x_1^\ell x^{\gamma_{k_{\ell+1},\ell+1}}\big\}
\big]\bigcup\big\{x_1^{q_i-1}x^{\epsilon_i}\big\}_{i=1}^{s},\label{gcolonx1}
\end{align}
where $\Gamma=\{x^{\gamma_{i,0}}\mid x^{\gamma_{j,1}}\mbox{ does not divides
}x^{\gamma_{i,0}}\mbox{ for all }1\leq j\leq k_1
\}$. Therefore,
\begin{equation*}
(I\colon x_1)_{\rm sft}=\big(\Gamma\textstyle\bigcup\big[\textstyle\bigcup_{\ell=0}^{p-1}
\big\{x_1^\ell x^{\gamma_{1,\ell+1}},\ldots,x_1^\ell 
x^{\gamma_{i,\ell+1}},\ldots,x_1^\ell x^{\gamma_{k_{\ell+1},\ell+1}}\big\}
\big]\bigcup\big\{x_1^{q_i-q_1+p}x^{\epsilon_i}\big\}_{i=1}^{s}\big).
\end{equation*}
\quad On the other hand, 
by Lemmas~\ref{lemma-termination-procedure} and \ref{quotient-I-xi}, we obtain 
\begin{align}
G(I_{\rm sft})&=\big[\textstyle\bigcup_{j=0}^p\big\{x_1^jx^{\gamma_{1,j}},
\ldots,x_1^jx^{\gamma_{i,j}},\ldots, x_1^jx^{\gamma_{k_j,j}}\big\}\big]
\bigcup\big\{x_1^{q_i-q_1+p+1}x^{\epsilon_i}\big\}_{i=1}^s\nonumber,\\
(I_{\rm sft}\colon
x_1)&=\big(\Gamma\textstyle\bigcup\{x^{\gamma_{i,1}}\}_{i=1}^{k_1}\textstyle\bigcup\big[\textstyle\bigcup_{j=2}^{p}
\big\{x_1^{j-1} x^{\gamma_{1,j}},\ldots,
x_1^{j-1} x^{\gamma_{k_{j},j}}\big\}
\big]\bigcup\big\{x_1^{q_i-q_1+p}x^{\epsilon_i}\big\}_{i=1}^{s}\big).\nonumber
\end{align}
Hence, making $j-1=\ell$, it follows that $(I\colon x_1)_{\rm sft}$ is
equal to $(I_{\rm sft}\colon x_1)$.

Assuming that $p=0$, we now show the equality $(I\colon
x_1^{q_1-1})=I_{\rm sft}$. By Eq.~\eqref{i-def} and
Lemma~\ref{lemma-termination-procedure}, one has the equalities
\begin{align*}
G(I)&=\{x^{\gamma_{1,0}},\ldots,x^{\gamma_{i,0}},
\ldots,x^{\gamma_{k_0,0}}\}
\textstyle\bigcup\big\{x_1^{q_i}x^{\epsilon_i}\big\}_{i=1}^s,\\
G(I_{\rm sft})&=\{x^{\gamma_{1,0}},\ldots,x^{\gamma_{i,0}},
\ldots,x^{\gamma_{k_0,0}}\}
\textstyle\bigcup\big
\{x_1^{q_i-q_1+1}x^{\epsilon_i}\big\}_{i=1}^s,
\end{align*}
where $q_1-p=q_1\geq 2$. Letting $\ell_i=q_i-q_1+1$ for
$i=1,\ldots,s$, the inclusion $(I\colon
x_1^{q_1-1})\supset I_{\rm sft}$ is clear because $p=0$ and
$x_1^{q_1-1}(x^{\ell_i}x^{\epsilon_i})=x^{q_i}x^{\epsilon_i}\in I$ 
for $i=1,\ldots,s$. To show the inclusion $(I\colon
x_1^{q_1-1})\subset I_{\rm sft}$, take $x^a\in(I\colon
x_1^{q_1-1})$. If $x^ax_1^{q_1-1}=x^{\gamma_{k,0}}x^\theta$, then 
$x^a=x^{\gamma_{k,0}}x^{\theta_1}\in I_{\rm sft}$ because $x_1\notin{\rm
supp}(x^{\gamma_{k,0}})$. If
$x^ax_1^{q_1-1}=x_1^{q_i}x^{\epsilon_i}x^\theta$, then 
$x^a=x_1^{q_i-q_1+1}x^{\epsilon_i}x^\theta\in I_{\rm sft}$.
\end{proof}

The following result explains the role of a full weighted 
polarization to link a monomial ideal with its signature. 

\begin{proposition}\label{sep22-25} 
Let $I\subset R=K[x_1,\ldots,x_n]$ be a monomial ideal of height at
least two. Then, there is a polynomial ring $S=R[z_1,\ldots,z_r]$, an 
ideal $I_{\rm pol}\subset S$, a ring $M=S/I_{\rm pol}$, where $I_{\rm
pol}$ is a full weighted polarization of $I$, and two $S$-regular sequence 
$\underline{f}$ and $\underline{g}$ such that 
\begin{enumerate}
\item[\rm(a)] $\underline{f}=\{z_i-x_{j_i}^{d_i}\}_{i=1}^r$ and 
$\underline{g}=\{z_i-x_{j_i}\}_{i=1}^r$, where $d_i\geq 2$ for all
$i$,
\item[\rm(b)] $\underline{f}$ and $\underline{g}$ are $M$-regular
sequences, 
\item[\rm(c)] $M/(\underline{f})=R/I$ and $M/(\underline{g})=R/{\rm
sgn}(I)$, 
\item[\rm(d)] ${\rm ht}(I)={\rm ht}(I_{\rm pol})={\rm ht}({\rm sgn}(I))$,
and
\item[\rm(e)] $\dim(M)=\dim(R/I)+r$ and ${\rm depth}(M)={\rm
depth}(R/I)+r$.
\end{enumerate}
\end{proposition}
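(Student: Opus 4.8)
The plan is to realize the passage from $R/I$ to $R/{\rm sgn}(I)$ as two successive deformations of one ambient quotient $M=S/I_{\rm pol}$, each governed by one of the binomial families. First I would run the two recursions in parallel. By Proposition~\ref{carlitos-vila}, recursively applying the shift operation terminates at ${\rm sgn}(I)$, say after $r$ steps, giving the chain $I=I_0,I_1,\ldots,I_r={\rm sgn}(I)$ with $I_k=(I_{k-1})_{\rm sft}$. Running the weighted partial polarization of Eq.~\eqref{pol-def} alongside it, but \emph{retaining} the new variable at each step instead of collapsing it as in Eq.~\eqref{sft-def}, produces the full weighted polarization $I_{\rm pol}\subset S=R[z_1,\ldots,z_r]$, where $z_i$ is the variable introduced at step $i$ to break a gap in some variable $x_{j_i}$ of size $d_i=q_1-p\geq 2$. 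The polarized variable is always one of the original $x_1,\ldots,x_n$, since each freshly introduced $z_i$ has a tight row and is never polarized again. As the gap condition is exactly $q_1-p\geq 2$, part~(a) is immediate from the construction: the step-$i$ substitution $x_{j_i}^{d_i}\to z_i$ records the de-polarization binomial $f_i=z_i-x_{j_i}^{d_i}$ and the shift binomial $g_i=z_i-x_{j_i}$, with $d_i\geq 2$. Because the shift preserves dimension (Lemma~\ref{depth-dim-inv}(c)), every $I_k$ retains height at least $2$, so the recursion and all cited lemmas stay applicable.

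The technical heart is parts~(b) and~(c), which I would prove together by induction on $r$, peeling one variable at a time in the order $z_r,z_{r-1},\ldots,z_1$ in which they were introduced. Since $I_{\rm pol}=(J_{r-1})_{\rm pol}$ for the $(r-1)$-step polarization $J_{r-1}$, Lemma~\ref{depth-dim-inv}(a) shows that both $f_r$ and $g_r$ are regular on $M$, and Lemma~\ref{depth-dim-inv}(b) (equivalently Lemma~\ref{I-Ipol-x0}) identifies $M/(f_r)$ with the de-polarization $S_{r-1}/J_{r-1}$ and $M/(g_r)$ with the shift $S_{r-1}/(J_{r-1})_{\rm sft}$. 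Telescoping the de-polarization substitutions collapses $M$ to $R/I$, yielding $M/(\underline{f})=R/I$; telescoping the shift substitutions collapses $M$ to $R/{\rm sgn}(I)$, yielding $M/(\underline{g})=R/{\rm sgn}(I)$, the endpoint being the signature precisely because the polarization is \emph{full}. At each stage we have a regular element followed, by the inductive hypothesis, by a regular sequence on the quotient by it, so $\underline{f}$ and $\underline{g}$ are $M$-regular sequences. This gives~(b) and~(c).

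With~(b) and~(c) in hand, parts~(d) and~(e) are formal. For~(e), $\underline{f}$ is an $M$-regular sequence of length $r$ with $M/(\underline{f})=R/I$, so by \cite[Lemma~2.3.10]{monalg-rev}, applied iteratively along $\underline{f}$ (using the weighted grading $\deg(z_i)=d_i$ of Lemma~\ref{I-Ipol-x0} so that each $f_i$ is homogeneous), each regular element lowers both depth and Krull dimension by one; hence $\dim(M)=\dim(R/I)+r$ and ${\rm depth}(M)={\rm depth}(R/I)+r$. For~(d), counting codimension in $S$ gives ${\rm ht}(I_{\rm pol})=(n+r)-\dim(M)=(n+r)-(\dim(R/I)+r)=n-\dim(R/I)={\rm ht}(I)$, while ${\rm ht}({\rm sgn}(I))=n-\dim(R/{\rm sgn}(I))=n-\dim(R/I)={\rm ht}(I)$ by the dimension invariance of the shift (Lemma~\ref{depth-dim-inv}(c)); hence ${\rm ht}(I)={\rm ht}(I_{\rm pol})={\rm ht}({\rm sgn}(I))$.

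The step I expect to be the main obstacle is the bookkeeping inside the induction for~(c): verifying that after peeling $g_i$ the resulting ideal is \emph{again} a weighted partial polarization, so that Lemma~\ref{depth-dim-inv} continues to apply at the next level, and that the shift substitutions genuinely telescope to ${\rm sgn}(I)$ rather than to some partially shifted ideal. This is exactly the compatibility between the polarization recursion of Eq.~\eqref{pol-def} and the shift recursion of Eq.~\eqref{sft-def} encoded in the definition of a full weighted polarization; once it is pinned down, the regular-sequence machinery and the depth and dimension count are routine.
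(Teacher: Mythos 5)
Your proposal is correct and takes essentially the same approach as the paper, whose entire proof of Proposition~\ref{sep22-25} is the single sentence that the claim follows by recursively applying the operation $I\mapsto I_{\rm pol}$ from the proof of Lemma~\ref{depth-dim-inv} until the corresponding operation $I\mapsto I_{\rm sft}$ reaches ${\rm sgn}(I)$. Your peeling induction via Lemma~\ref{depth-dim-inv}(a)--(c), the telescoping identifications $M/(\underline{f})=R/I$ and $M/(\underline{g})=R/{\rm sgn}(I)$, and the codimension count yielding (d) and (e) are precisely the details the paper leaves implicit --- including the pol/sft compatibility bookkeeping you flag at the end, which the paper likewise does not spell out.
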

\begin{proof} This follows by recursively applying the 
operation $I\mapsto I_{\rm pol}$, that we use in the 
proof of Lemma~\ref{depth-dim-inv}, until the corresponding operation
$I\mapsto I_{\rm sft}$ reaches ${\rm sgn}(I)$.
\end{proof}

Let $I$ be a graded ideal of $R$ 
and let ${\mathbb F}_\star$ be the minimal graded
free resolution of $R/I$ as an 
$R$-module: 
\begin{equation*}
{\mathbb F}_\star:\ \ \ 0\rightarrow 
F_g\stackrel{}{\rightarrow} \cdots\rightarrow F_k\rightarrow\cdots 
\rightarrow F_1\stackrel{}{\rightarrow} F_0
\rightarrow R/I \rightarrow 0,
\end{equation*}
where 
$$F_0=R\mbox{ and }F_k=\bigoplus_{j}R(-j)^{b_{k,j}}\mbox{  for } k=1,\ldots,g. 
$$
\quad The {\it Castelnuovo--Mumford regularity\/} 
of $R/I$ ({\it regularity} of $R/I$ for short) is given by 
\begin{equation*}
{\rm reg}(R/I):=\max\{j-k\mid 
b_{k,j}\neq 0\},
\end{equation*} 
and the \textit{projective dimension} of $R/I$, denoted ${\rm
pd}_R(R/I)$, is equal to $g$. The quotient ring $R/I$ is \textit{Gorenstein}
if $R/I$ is Cohen--Macaulay and there is a unique $j$ such that
$b_{g,j}\neq 0$ and $b_{g,j}=1$.
 
\begin{lemma}\cite[Corollary~4.13]{eisenbud-syzygies}\label{dec26-25}
Suppose that $M$ is a finitely generated graded $S$-module. If $x$ is
a linear form in $S$ that is a non-zero-divisor on $M$, then 
${\rm reg}(M)={\rm reg}(M/xM)$. 
\end{lemma}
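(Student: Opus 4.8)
The plan is to prove the statement through the graded Betti numbers, since the definition of regularity used in the paper, ${\rm reg}(M)=\max\{j-i\mid \beta_{i,j}(M)\neq 0\}$ with $\beta_{i,j}(M)=\dim_K{\rm Tor}_i^S(M,K)_j$, makes this the most direct route. First I would record that, because $x$ is a linear form (degree one) and a non-zero-divisor on $M$, multiplication by $x$ yields a short exact sequence of graded $S$-modules with degree-zero maps
$$0\longrightarrow M(-1)\stackrel{x}{\longrightarrow} M\longrightarrow M/xM\longrightarrow 0.$$
Injectivity on the left is precisely the hypothesis that $x$ is a non-zero-divisor, and the twist $M(-1)$ accounts for $\deg(x)=1$.

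Next I would apply ${\rm Tor}_\bullet^S(-,K)$ to this sequence to obtain the long exact sequence
$$\cdots\to{\rm Tor}_i^S(M(-1),K)\stackrel{x}{\longrightarrow}{\rm Tor}_i^S(M,K)\to{\rm Tor}_i^S(M/xM,K)\to{\rm Tor}_{i-1}^S(M(-1),K)\stackrel{x}{\longrightarrow}\cdots$$
The key step, and the one point requiring a genuine argument, is that every map labelled $x$ is zero: each ${\rm Tor}_i^S(M,K)$ is annihilated by $\mathfrak{m}$ because its $S$-action factors through $S\to K=S/\mathfrak{m}$, and $x\in\mathfrak{m}$. Hence the long exact sequence breaks into short exact pieces, and reading them in internal degree $j$ (using ${\rm Tor}_{i-1}^S(M(-1),K)_j={\rm Tor}_{i-1}^S(M,K)_{j-1}$) gives
$$0\to{\rm Tor}_i^S(M,K)_j\to{\rm Tor}_i^S(M/xM,K)_j\to{\rm Tor}_{i-1}^S(M,K)_{j-1}\to 0,$$
so that $\beta_{i,j}(M/xM)=\beta_{i,j}(M)+\beta_{i-1,j-1}(M)$ for all $i,j$.

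From this identity the equality of regularities is immediate. For ${\rm reg}(M/xM)\le{\rm reg}(M)$: if $\beta_{i,j}(M/xM)\neq 0$, then either $\beta_{i,j}(M)\neq 0$, giving $j-i\le{\rm reg}(M)$, or $\beta_{i-1,j-1}(M)\neq 0$, giving $(j-1)-(i-1)=j-i\le{\rm reg}(M)$; taking the maximum over $(i,j)$ yields the bound. For the reverse inequality, choose $(i,j)$ with $\beta_{i,j}(M)\neq 0$ and $j-i={\rm reg}(M)$; then $\beta_{i,j}(M/xM)\ge\beta_{i,j}(M)>0$, whence ${\rm reg}(M/xM)\ge j-i={\rm reg}(M)$. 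Combining the two bounds gives ${\rm reg}(M)={\rm reg}(M/xM)$.

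I expect the only real obstacle to be the vanishing of the maps induced by $x$ on ${\rm Tor}$; the rest is bookkeeping with Betti numbers (and note the convention $\beta_{-1,\bullet}=0$ handles the edge case $i=0$ cleanly). A parallel proof is available through the local cohomology description ${\rm reg}(M)=\max_i\{i+a_i(M)\}$, where $a_i(M)=\max\{j\mid H^i_\mathfrak{m}(M)_j\neq 0\}$: the same short exact sequence gives a long exact sequence in local cohomology, the ``$\le$'' direction follows by sandwiching $H^i_\mathfrak{m}(M/xM)_j$ between a quotient of $H^i_\mathfrak{m}(M)_j$ and a submodule of $H^{i+1}_\mathfrak{m}(M)_{j-1}$, and the ``$\ge$'' direction follows since in the top degree $a_i(M)+1$ the connecting map $H^{i-1}_\mathfrak{m}(M/xM)_{a_i(M)+1}\to H^i_\mathfrak{m}(M)_{a_i(M)}$ is surjective onto a nonzero space (here one uses ${\rm depth}(M)\ge 1$, so $H^0_\mathfrak{m}(M)=0$ and the max is attained at some $i\ge 1$, avoiding index problems). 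I would present the ${\rm Tor}$ version as the main argument, as it matches the paper's definition of regularity verbatim.
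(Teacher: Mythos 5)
Your proof is correct, but note first that the paper itself offers no proof of this lemma: it is quoted with a citation to Eisenbud \cite{eisenbud-syzygies}, so there is no internal argument to compare against, and the natural comparison is with the cited source. There, the statement is obtained from the local cohomology characterization of regularity, essentially the route you sketch at the end, whereas your main argument runs through graded Betti numbers. That argument is sound: the one genuinely nontrivial step, that multiplication by $x$ induces the zero map on each ${\rm Tor}_i^S(M,K)$ because these are $K$-vector spaces annihilated by $\mathfrak{m}\ni x$, is exactly right; it splits the long exact sequence into the short exact pieces you display, the identity $\beta_{i,j}(M/xM)=\beta_{i,j}(M)+\beta_{i-1,j-1}(M)$ follows, and both inequalities are then immediate as you say (with $\beta_{-1,\bullet}=0$ handling $i=0$). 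Your route buys two things in the present context. First, it matches the paper's resolution-based definition of ${\rm reg}$ verbatim, with no need to invoke local duality or the characterization ${\rm reg}(M)=\max_i\{i+a_i(M)\}$. Second, it generalizes with no extra effort to a homogeneous nonzerodivisor $f$ of degree $d\geq 1$: the twist becomes $M(-d)$, the maps on ${\rm Tor}$ still vanish since $f\in\mathfrak{m}$, and one obtains $\beta_{i,j}(M/fM)=\beta_{i,j}(M)+\beta_{i-1,j-d}(M)$, hence ${\rm reg}(M)\leq{\rm reg}(M/fM)\leq{\rm reg}(M)+d-1$ --- in fact a slightly sharper lower bound than the one recorded in Proposition~\ref{regular-degf}, whose stated proof is an adaptation of the same Eisenbud argument. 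Finally, your local cohomology sketch correctly isolates the only delicate point of that alternative (the maximum of $i+a_i(M)$ is attained at some $i\geq 1$ because $x$ regular on $M\neq 0$ forces $H^0_{\mathfrak{m}}(M)=0$), so both of your routes are complete.
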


The following result follows adapting the proof of
\cite[Corollary~4.13]{eisenbud-syzygies}.

\begin{proposition}\label{regular-degf}
Let $M$ be a finitely generated graded $S$-module. If $f$ is
a homogeneous polynomial of $S$ of degree $\geq 1$ that is a
non-zero-divisor on $M$, 
then 
$$
{\rm reg}(M)-(\deg(f)-1)\leq{\rm reg}(M/fM)\leq{\rm reg}(M)+\deg(f)-1.
$$
\end{proposition}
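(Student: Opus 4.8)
The plan is to base the whole argument on the short exact sequence of graded $S$-modules
\[
0 \longrightarrow M(-d) \stackrel{\cdot f}{\longrightarrow} M \longrightarrow M/fM \longrightarrow 0, \qquad d:=\deg(f),
\]
which is exact precisely because $f$ is a non-zero-divisor on $M$ (so $\cdot f$ is injective) and is homogeneous of degree $d$ (so the map raises degrees by $d$, whence the twist $M(-d)$). First I would record the elementary identity ${\rm reg}(M(-d))={\rm reg}(M)+d$, which is immediate from the definition of regularity via Betti numbers, since twisting by $(-d)$ shifts by $+d$ every index $j$ occurring in the minimal free resolution.

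Next I would invoke the two standard estimates governing the behaviour of regularity along a short exact sequence $0\to A\to B\to C\to 0$, namely
\[
{\rm reg}(C)\leq\max\{{\rm reg}(B),\,{\rm reg}(A)-1\}\quad\text{and}\quad {\rm reg}(A)\leq\max\{{\rm reg}(B),\,{\rm reg}(C)+1\}.
\]
These are exactly the inequalities that underlie the linear case recorded in Lemma~\ref{dec26-25}, and they follow at once from the long exact sequence in local cohomology $\cdots\to H^{i-1}_{\mathfrak m}(C)\to H^{i}_{\mathfrak m}(A)\to H^{i}_{\mathfrak m}(B)\to H^{i}_{\mathfrak m}(C)\to\cdots$ together with the characterization ${\rm reg}(M)=\max_i\{\,i+\max\{j\mid H^{i}_{\mathfrak m}(M)_j\neq 0\}\,\}$. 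Adapting Eisenbud's proof of the case $\deg(f)=1$ then amounts to running the same argument with the twist $M(-d)$ in place of $M(-1)$, carrying the degree $d$ through the estimates. For the upper bound one may alternatively avoid local cohomology altogether: if $\mathbb{G}_\star$ is the minimal free resolution of $M$, the mapping cone of $\cdot f\colon\mathbb{G}_\star(-d)\to\mathbb{G}_\star$ is a (generally non-minimal) free resolution of $M/fM$ whose $i$-th free module is $G_i\oplus G_{i-1}(-d)$, and reading off the twists gives ${\rm reg}(M/fM)\leq\max\{{\rm reg}(M),\,{\rm reg}(M)+d-1\}$.

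Substituting $A=M(-d)$, $B=M$, $C=M/fM$ then finishes the proof. The first estimate gives ${\rm reg}(M/fM)\leq\max\{{\rm reg}(M),\,{\rm reg}(M)+d-1\}={\rm reg}(M)+d-1$ since $d\geq 1$, which is the upper bound. The second estimate gives ${\rm reg}(M)+d\leq\max\{{\rm reg}(M),\,{\rm reg}(M/fM)+1\}$; because $d\geq 1$ forces the left-hand side to exceed ${\rm reg}(M)$ strictly, the maximum must be its second argument, so ${\rm reg}(M/fM)\geq{\rm reg}(M)+d-1\geq{\rm reg}(M)-(d-1)$, which is the claimed lower bound. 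In fact this shows the sharper equality ${\rm reg}(M/fM)={\rm reg}(M)+\deg(f)-1$.

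I expect the only genuine obstacle to be the bookkeeping of the maxima and the degree shift: in both directions one must invoke $\deg(f)\geq 1$ to discard the ${\rm reg}(M)$ branch of the relevant maximum (and, for the lower bound, to pass from $d-1$ to the weaker $-(d-1)$), and one should treat the trivial case $M=0$ separately, where all three regularities are $-\infty$. Once the two short-exact-sequence estimates are available, which is the part genuinely ``adapted'' from Eisenbud's proof of Lemma~\ref{dec26-25}, everything else is a direct substitution.
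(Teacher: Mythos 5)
Your proof is correct and is essentially the paper's own argument: the paper's proof consists precisely of the pointer ``adapt Eisenbud's Corollary~4.13,'' and that adaptation is exactly what you carry out, via the short exact sequence $0\to M(-d)\xrightarrow{\,f\,} M\to M/fM\to 0$ with $d=\deg(f)$, the identity ${\rm reg}(M(-d))={\rm reg}(M)+d$, and the two standard regularity estimates along short exact sequences (your mapping-cone alternative for the upper bound is also valid). Your bonus observation that the same bookkeeping yields the equality ${\rm reg}(M/fM)={\rm reg}(M)+\deg(f)-1$ for $M\neq 0$ is correct in the standard graded setting where the local-cohomology characterization of regularity holds, and it is consistent with Remark~\ref{I-Ipol-x0-rem}, so the proposition's stated lower bound is indeed not sharp.
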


\begin{proposition} Let $I$ be a monomial ideal of $R$ and let 
$I_{\rm pol}\subset S$ be a full weighted polarization of $I$. 
Then, the following hold: 
\begin{enumerate}
\item[\rm(a)] ${\rm reg}(S/I_{\rm pol})={\rm reg}(R/{\rm sgn}(I))$.
\item[\rm(b)] If $\underline{f}=\{z_i-x_{j_i}^{d_i}\}_{i=1}^r$ and
$\deg(z_i)=d_i$ 
for all $i$, 
then ${\rm reg}(R/I)={\rm reg}_R(S/(I_{\rm pol},\underline{f}))$.  
\end{enumerate}
\end{proposition}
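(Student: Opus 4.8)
The plan is to derive both parts from Proposition~\ref{sep22-25}, which supplies the polynomial ring $S=R[z_1,\ldots,z_r]$, the module $M=S/I_{\rm pol}$, and the two $M$-regular sequences $\underline{f}=\{z_i-x_{j_i}^{d_i}\}_{i=1}^r$ (with $d_i\geq 2$) and $\underline{g}=\{z_i-x_{j_i}\}_{i=1}^r$, together with the identifications $M/(\underline{f})=R/I$ and $M/(\underline{g})=R/{\rm sgn}(I)$. The governing observation is that the two parts take place in two different gradings of the same ring $S$. Since the generators of $I_{\rm pol}$ are monomials (see Eq.~\eqref{pol-def}), $I_{\rm pol}$ is homogeneous for both gradings. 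For part (a) I would use the standard grading $\deg(z_i)=1$, in which the binomials of $\underline{g}$ are \emph{linear} forms; for part (b) I would use the weighted grading $\deg(z_i)=d_i$, in which the binomials of $\underline{f}$ are homogeneous of degree $d_i\geq 2$.

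For part (a), endowing $S$ with the standard grading makes each $z_i-x_{j_i}$ a linear form, and by Proposition~\ref{sep22-25}(b) the sequence $\underline{g}$ is $M$-regular, so each such form is a nonzerodivisor on the successive quotient. Applying Lemma~\ref{dec26-25} $r$ times then yields
\[
{\rm reg}(S/I_{\rm pol})={\rm reg}_S(M)={\rm reg}_S\big(M/(\underline{g})M\big).
\]
By Proposition~\ref{sep22-25}(c) the last module is $R/{\rm sgn}(I)$. As it is annihilated by $\underline{g}$, it is genuinely a module over $S/(\underline{g})=R$, and its regularity over $S$ coincides with its regularity over $R$: the Castelnuovo--Mumford regularity of a standard graded algebra is independent of the chosen polynomial presentation, and here $R/{\rm sgn}(I)=S/(I_{\rm pol},\underline{g})$ gives presentations over $R$ and over $S$. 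Equivalently, passing to $S=R[\underline{z}]$ with $z_i$ acting as $x_{j_i}$ amounts to tensoring the minimal $R$-resolution of $R/{\rm sgn}(I)$ with the Koszul complex on the linear forms $\underline{g}$, which leaves the invariant $\max\{j-k\}$ unchanged. This gives ${\rm reg}_S(M/(\underline{g})M)={\rm reg}(R/{\rm sgn}(I))$ and proves (a).

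For part (b), I would switch to the weighted grading $\deg(z_i)=d_i$, which makes each $z_i-x_{j_i}^{d_i}$ homogeneous. Here Lemma~\ref{dec26-25} is unavailable, since the binomials of $\underline{f}$ have degree $d_i\geq 2$, and Proposition~\ref{regular-degf} would only deliver the two-sided inequality; the required equality must instead come from a graded isomorphism. By Proposition~\ref{sep22-25}(c) one has $S/(I_{\rm pol},\underline{f})=M/(\underline{f})=R/I$, and the underlying ring isomorphism sends $z_i\mapsto x_{j_i}^{d_i}$. Since $\deg(z_i)=d_i=\deg(x_{j_i}^{d_i})$, this map is degree-preserving and $R$-linear, hence a graded isomorphism of $R$-modules --- precisely the iterated form of the isomorphism $\overline{\varphi}$ of Lemma~\ref{I-Ipol-x0}. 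A graded $R$-module isomorphism carries the minimal graded free $R$-resolution to a minimal one, so it preserves the graded Betti numbers and thus the regularity, giving ${\rm reg}_R(S/(I_{\rm pol},\underline{f}))={\rm reg}_R(R/I)={\rm reg}(R/I)$.

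The main obstacle, and the step requiring care, is the bookkeeping of the two gradings together with the fact that in part (a) the regularity is compared across two different ambient rings, $S$ versus $R$. Justifying that adjoining the variables $z_i$ and then killing the linear regular sequence $\underline{g}$ does not alter the regularity of $R/{\rm sgn}(I)$ is the only genuinely nontrivial ingredient; once that presentation-independence is in place, everything else is a direct application of Proposition~\ref{sep22-25} together with Lemmas~\ref{dec26-25} and \ref{I-Ipol-x0}.
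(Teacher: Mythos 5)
Your proof is correct and takes essentially the same route as the paper's: part (a) is Proposition~\ref{sep22-25}(c) combined with iterated application of Lemma~\ref{dec26-25} to the linear regular sequence $\underline{g}$ in the standard grading, and part (b) is the iterated graded isomorphism of Lemma~\ref{I-Ipol-x0} in the weighted grading $\deg(z_i)=d_i$. The paper states these two steps without elaboration, and your bookkeeping of the two gradings and of the presentation-independence of regularity between $S$ and $R=S/(\underline{g})$ simply fills in the details the paper leaves implicit.
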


\begin{proof} (a) Letting $M=S/I_{\rm pol}$, the equality follows from 
Proposition~\ref{sep22-25}(c) and Lemma~\ref{dec26-25}. 

(b) This part follows from Lemma~\ref{I-Ipol-x0}. 
\end{proof}

\paragraph{\it Comparing the free resolutions of $I$, ${\rm sgn}(I)$,
and $I_{\rm pol}$}

Given an $S$-module $M$ and an $S$-regular sequence $\underline{f}$
which is also an $M$-regular sequence, it is well-known 
\cite[Corollary~1.6.14]{BHer} that we have
the following two properties:

\begin{enumerate}
\item[\rm(a)] ${\rm Tor}_i(M,S/(\underline{f}))=0$ for all $i\geq 1$. 
\item[\rm(b)] If ${\mathbb F}_\star$ is a minimal free resolution of 
$M$ by free $S$-modules, then ${\mathbb
F}_\star\otimes_S(S/(\underline{f}))$ is a free resolution of 
$M\otimes_S(S/(\underline{f}))$ by free $S/(\underline{f})$-modules.
\end{enumerate}

For convenience, we explain how this works in the situation of
Lemma~\ref{depth-dim-inv} by 
comparing the minimal free resolution of $S/I_{\rm pol}$
with that of $R/I$ and $R/I_{\rm sft}$. Letting $S=R[x_0]$, $M=S/I_{\rm pol}$, $f=x_0-x_1^{q_1-p}$ and
$g=x_0-x_1$, there are short exact sequences 
\begin{align*}
&0\rightarrow S\stackrel{f}{\longrightarrow} S
\longrightarrow S/(f)\rightarrow 0,\\
&0\rightarrow S\stackrel{g}{\longrightarrow} S
\longrightarrow S/(g)\rightarrow 0.
\end{align*}
Since $f$ and $g$ are regular on $M$ tensoring the two sequences with
$M$, it is seen that we obtain short exact sequences  
\begin{align*}
&0\rightarrow M\otimes_{S}
S\stackrel{1\otimes f}{\longrightarrow}M\otimes_{S}
S\longrightarrow
M\otimes_{S}(S/(f))=M/fM\longrightarrow 0,\\
&0\rightarrow M\otimes_{S}
S\stackrel{1\otimes g}{\longrightarrow}M\otimes_{S}
S\longrightarrow
M\otimes_{S}(S/(g))=M/gM\longrightarrow 0,\\
\end{align*}
and consequently ${\rm Tor}_i(M,S/(f))=0$ and ${\rm
Tor}_i(M,S/(g))=0$ for all $i\geq 1$. Therefore, if 
\begin{equation}\label{dec25-25}
{\mathbb F}_\star:\ \ \ 0\longrightarrow F_g\stackrel{\varphi_g}{\longrightarrow}
F_{g-1}\stackrel{\varphi_{g-1}}{\ \longrightarrow}\cdots
\longrightarrow F_1\stackrel{\varphi_1}{\longrightarrow}
F_0\longrightarrow M\longrightarrow 0, 
\end{equation}
is the minimal free resolution of $M$ by free $S$-modules, then 
\begin{align}
&\ \ \ 0\longrightarrow
F_g\otimes_{S}(S/(f))\stackrel{\varphi_g\otimes{1}}{\ \ \longrightarrow}
\cdots
\stackrel{\varphi_1\otimes{1}}{\longrightarrow}
F_0\otimes_{S}(S/(f))\longrightarrow
M\otimes_{S}(S/(f))\longrightarrow 0\mbox{ and }\label{jan1-26}\\  
&\ \ \ 0\longrightarrow
F_g\otimes_{S}(S/(g))\stackrel{\varphi_g\otimes{1}}{\ \ \longrightarrow}
\cdots
\stackrel{\varphi_1\otimes{1}}{\longrightarrow}
F_0\otimes_{S}(S/(g))\longrightarrow
M\otimes_{S}(S/(g))\longrightarrow 0,\label{jan1-26-1} 
\end{align}
are the free resolution of $M/fM=R/I$ by free $S/(f)$-modules and 
 $M/gM=R/I_{\rm sft}$ by free $S/(g)$-modules, respectively.

\begin{remark}\label{jan1-26-remark} This means that the minimal free resolution of $R/I$ is
obtained from that of $M=R[x_0]/I_{\rm pol}$ by making
the substitution $x_0\rightarrow x_1^{q_1-p}$ in all matrices $\varphi_k$, and    
similarly the minimal free resolution of $R/I_{\rm sft}$ is
obtained from the resolution of $M=R[x_0]/I_{\rm pol}$ by making
the substitution $x_0\rightarrow x_1$ in all matrices $\varphi_k$. 
The general case follows using Proposition~\ref{sep22-25}.
\end{remark}

We come to one of our main results.

\begin{theorem}\label{sgn-invariants} Let $I$ be a monomial ideal of
$R$ of height at least
$2$ and let ${\rm sgn}(I)$ be its signature. The following hold. 
\begin{enumerate} 
\item[\rm(a)] ${\rm depth}(R/I)={\rm depth}(R/{\rm sgn}(I))$.
\item[\rm(b)] ${\rm dim}(R/I)={\rm dim}(R/{\rm sgn}(I))$.
\item[\rm(c)] $R/I$ is a Cohen--Macaulay ring $($resp. Gorenstein ring$)$ if and only 
if $R/{\rm sgn}(I)$ is a Cohen--Macaulay ring $($resp. Gorenstein ring$)$.
\item[\rm(d)] ${\rm reg}(R/I)\geq{\rm reg}(R/{\rm sgn}(I))$.
\end{enumerate}
\end{theorem}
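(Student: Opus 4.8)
The plan is to route all four parts through the single ring $M=S/I_{\rm pol}$ produced by Proposition~\ref{sep22-25}, where $I_{\rm pol}\subset S=R[z_1,\ldots,z_r]$ is a full weighted polarization of $I$, together with the two $M$-regular sequences $\underline{f}=\{z_i-x_{j_i}^{d_i}\}_{i=1}^r$ and $\underline{g}=\{z_i-x_{j_i}\}_{i=1}^r$ (with $d_i\geq 2$) satisfying $M/(\underline{f})=R/I$ and $M/(\underline{g})=R/{\rm sgn}(I)$. In this way $R/I$ and $R/{\rm sgn}(I)$ become the two quotients of a \emph{common} module by regular sequences of the \emph{same} length $r$, and parts (a)--(c) reduce to bookkeeping about regular sequences.

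For (a) and (b): each of $\underline{f}$ and $\underline{g}$ is an $M$-regular sequence of length $r$ lying in the graded maximal ideal, so passing to either quotient lowers both depth and Krull dimension by exactly $r$ (\cite[Lemma~2.3.10]{monalg-rev}). Hence ${\rm depth}(R/I)={\rm depth}(M)-r={\rm depth}(R/{\rm sgn}(I))$ and $\dim(R/I)=\dim(M)-r=\dim(R/{\rm sgn}(I))$. Alternatively, one may prove (a)--(c) incrementally: by Proposition~\ref{carlitos-vila} the signature is reached after finitely many shifts $I\mapsto I_{\rm sft}$, and Lemma~\ref{depth-dim-inv} shows each shift preserves depth, dimension, the Cohen--Macaulay property, and the Gorenstein property.

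For (c): $R/I$ is Cohen--Macaulay iff ${\rm depth}=\dim$, which by (a) and (b) holds for $R/I$ iff it holds for $M$ iff it holds for $R/{\rm sgn}(I)$. Being Gorenstein ascends and descends along a quotient by a homogeneous regular sequence (the graded form of \cite[Proposition~3.1.19(b)]{BHer}, used already in Lemma~\ref{depth-dim-inv}(e)), so $R/I$ is Gorenstein iff $M$ is iff $R/{\rm sgn}(I)$ is.

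For (d) the grading cannot be ignored, and this is the part I expect to be the real obstacle. The difficulty is that $z_i-x_{j_i}^{d_i}$ is homogeneous only for $\deg z_i=d_i$, whereas $z_i-x_{j_i}$ is homogeneous only for $\deg z_i=1$, so no single grading on $S$ presents both $R/I$ and $R/{\rm sgn}(I)$ simultaneously as regular-sequence quotients of $M$; moreover the two-sided estimate of Proposition~\ref{regular-degf} is too lossy at its lower end to force the inequality. The way I would get around this is the explicit comparison of resolutions in Remark~\ref{jan1-26-remark}: the minimal free resolutions of $R/I$ and of $R/{\rm sgn}(I)$ are both obtained from the multigraded minimal free resolution of $M$ by the degree-preserving substitutions $z_i\mapsto x_{j_i}^{d_i}$ and $z_i\mapsto x_{j_i}$, respectively, and since neither substitution introduces a unit entry in any differential, both stay minimal. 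Consequently $R/I$ and $R/{\rm sgn}(I)$ carry the same homological Betti positions as $M$, and a syzygy sitting in multidegree $\alpha$ acquires total degree $\sum_i d_i\alpha_{z_i}+\sum_k\alpha_{x_k}$ in $R/I$ but $\sum_i\alpha_{z_i}+\sum_k\alpha_{x_k}$ in $R/{\rm sgn}(I)$. As $d_i\geq 2$, every graded shift for $R/I$ dominates the corresponding shift for $R/{\rm sgn}(I)$; taking the maximum of $j-k$ over the common set of homological degrees then yields ${\rm reg}(R/I)\geq{\rm reg}(R/{\rm sgn}(I))$.
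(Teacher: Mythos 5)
Your proof is correct and takes essentially the same approach as the paper: parts (a)--(c) are the paper's regular-sequence argument (the paper iterates Lemma~\ref{depth-dim-inv} one shift at a time via Proposition~\ref{carlitos-vila}, while you invoke the one-pass full polarization of Proposition~\ref{sep22-25}, which the paper also records as equivalent), and part (d) uses exactly the paper's mechanism of comparing total degrees of the multigraded shifts under the substitutions $z_i\mapsto x_{j_i}^{d_i}$ versus $z_i\mapsto x_{j_i}$ (Remark~\ref{jan1-26-remark} and the degree computation in the paper's proof, done there one shift at a time). Your explicit observation that both substituted resolutions remain minimal because the monomial entries never become units is a detail the paper leaves implicit, but it is the same argument.
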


\begin{proof} (a)--(c) follow from Proposition~\ref{carlitos-vila} and recursively
applying Lemma~\ref{depth-dim-inv}.

(d) Let $I_{\rm pol}$ and $I_{\rm sft}$ be the ideals of
Eqs.~\eqref{pol-def} and \eqref{sft-def}, respectively. By 
Proposition~\ref{carlitos-vila}, it suffices to show that 
${\rm reg}(R/I)\geq{\rm reg}(R/I_{\rm sft})$. Let $\varphi_k$ be any
of the matrices that appear in the minimal free resolution 
of $M=R[x_0]/I_{\rm pol}$ given in Eq.~\eqref{dec25-25}, let $u$ be
any of the column vectors of $\varphi_k$ such that $x_0$ occurs in
$u$, and let $v$ be the vector obtained from $u$ by making
$x_0=x_1^{q_1-p}$. Since $u$ is homogeneous and the entries of
$\varphi_k$ are either $0$ or a monomial, we may assume that 
the $j$-th entry of $u$ is $x_0^\ell x^a$, where $x_0\notin{\rm
supp}(x^a)$ and $\ell\geq 1$. Then, 
\begin{align*}
\deg(u)&=\deg(x_0^\ell x^a)+\deg(e_j),\\
\deg(v)&=\deg(x_1^{\ell(q_1-p)}x^a)+\deg(e_j)=
\deg((x^ax_1^{\ell})x_1^{\ell(q_1-p-1)})+\deg(e_j)\\
&=\deg(u)+\ell(q_1-p-1)\geq \deg(u)+1, 
\end{align*}
where $e_j$ is the $j$-th unit vector. The degrees of the columns of $\varphi_k$ that
do not contain $x_0$ do not change under the substitution
$x_0\rightarrow x_1^{q_1-p}$, Hence, as the minimal free resolutions 
of $R/I$  and $R/I_{\rm sft}$ are 
obtained from $M=R[x_0]/I_{\rm pol}$ by making
the substitution $x_0\rightarrow x_1^{q_1-p}$ and $x_0\rightarrow x_1$
in all matrices
$\varphi_k$, respectively, we obtain that ${\rm reg}(R/I)\geq{\rm
reg}(R/I_{\rm sft})$. 
\end{proof}

\begin{corollary}\cite[Proposition~5]{cm-oriented-trees}\label{coro1}
Let $I$ be a squarefree monomial ideal of $R$ and
let $\{d_i\}_{i=1}^n$ be a sequence of positive integers. If $J$ is the
ideal of $R$ generated by all monomials $x_1^{d_1}\cdots x_n^{d_n}$
such that ${\rm supp}(x_1^{d_1}\cdots x_n^{d_n})\in G(I)$, then 
${\rm sgn}(J)=I$ and $R/J$ is Cohen--Macaulay $($resp. Gorenstein$)$ if
and only if $R/I$ is Cohen--Macaulay $($resp. Gorenstein$)$. 
\end{corollary}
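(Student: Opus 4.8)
The plan is to reduce the Cohen--Macaulay and Gorenstein equivalences to Theorem~\ref{sgn-invariants}(c) by first establishing the identity ${\rm sgn}(J)=I$ through a direct computation with incidence matrices. The conceptual content is light once the incidence-matrix picture is in place; the operative hypothesis is ${\rm ht}(I)\geq 2$, which is the relevant setting since the identity ${\rm sgn}(J)=I$ forces $J$, and hence $I$, to be non-principal.

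First I would record the shape of $G(J)$. For each generator $x^\sigma=\prod_{i\in\sigma}x_i$ of $I$ (identifying a squarefree generator with its support $\sigma$), set $m_\sigma:=\prod_{i\in\sigma}x_i^{d_i}$; by definition $J$ is generated by the $m_\sigma$. Since every $d_i\geq 1$, the exponent of $x_i$ in $m_\sigma$ is positive exactly when $i\in\sigma$, so $m_\sigma\mid m_\tau$ if and only if $\sigma\subseteq\tau$, i.e. $x^\sigma\mid x^\tau$. As the $x^\sigma$ form an anti-chain, so do the $m_\sigma$, and hence $G(J)=\{m_\sigma\mid \sigma\in G(I)\}$ with the evident bijection $G(I)\leftrightarrow G(J)$.

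Next I would compute ${\rm sgn}(A)$ row by row, where $A$ is the incidence matrix of $J$ with columns indexed by $\sigma\in G(I)$. The $(i,\sigma)$-entry of $A$ equals $d_i$ when $i\in\sigma$ and $0$ otherwise, so the $i$-th row of $A$ takes values in $\{0,d_i\}$. Because $I$, equivalently $J$ since $\sqrt{J}=I$, has height at least $2$, no variable divides all the generators, so each row that is not identically zero contains both the value $0$ and the value $d_i$. For such a row the ascending list of distinct entries is $0<d_i$, whence the signature sends $0\mapsto 0$ and $d_i\mapsto 1$, while a row that is identically zero stays zero. In either case the $i$-th row of ${\rm sgn}(A)$ is exactly the $0/1$ indicator of $\{\sigma\mid i\in\sigma\}$, which is the $i$-th row of the incidence matrix of $I$. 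Therefore ${\rm sgn}(A)$ is the incidence matrix of $I$, and ${\rm sgn}(J)=I$.

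Finally, since $\sqrt{J}=I$ gives ${\rm ht}(J)={\rm ht}(I)\geq 2$, Theorem~\ref{sgn-invariants}(c) applies to $J$ and yields that $R/J$ is Cohen--Macaulay (resp. Gorenstein) if and only if $R/{\rm sgn}(J)=R/I$ is Cohen--Macaulay (resp. Gorenstein), which is the assertion. I expect the only delicate point to be the row-by-row signature bookkeeping, specifically the need to know that the value $0$ actually occurs in every nonzero row; this is precisely where the height hypothesis (no variable dividing all generators) enters, and everything else is a formal consequence of Theorem~\ref{sgn-invariants}.
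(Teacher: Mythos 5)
Your treatment of the main case is essentially the paper's own proof: both arguments observe that the rows of the incidence matrix of $J$ are $d_1F_1,\ldots,d_nF_n$, where $F_i$ is the $i$-th row of the incidence matrix of $I$, that the nonzero entries of $d_iF_i$ all equal $d_i$, hence ${\rm sgn}(d_iF_i)=F_i$ and ${\rm sgn}(J)=I$, after which the Cohen--Macaulay/Gorenstein equivalence is exactly Theorem~\ref{sgn-invariants}(c) applied to $J$. You are in fact more careful than the paper on two points: you check that the monomials $m_\sigma$ form an anti-chain, so that they really constitute $G(J)$ and the incidence matrix is the correct one, and you make explicit why the value $0$ occurs in every nonzero row (no variable divides all generators once the height is at least $2$), which is where the height hypothesis genuinely enters.

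The one real discrepancy is the height-one case. The corollary as stated carries no height hypothesis; the paper opens by disposing of ${\rm ht}(I)=1$ with the claim that then $I=(x_i)$, whereas you declare ${\rm ht}(I)\geq 2$ to be the operative hypothesis and never return to height one, so strictly speaking a case of the stated result is left unproved. Your instinct is nevertheless sound, because at height one the first assertion of the statement fails, and the paper's one-line reduction is itself flawed: a squarefree ideal of height $1$ need not be principal. For instance, with $I=(x_1x_2,x_1x_3)$ one gets $J=(x_1^{d_1}x_2^{d_2},x_1^{d_1}x_3^{d_3})$, whose first incidence row is the constant row $(d_1,d_1)$, so ${\rm sgn}(J)=(x_2,x_3)\neq I$; and even for $I=(x_i)$ one has ${\rm sgn}(J)={\rm sgn}\big((x_i^{d_i})\big)=R\neq I$, since the signature of a principal ideal is $R$. (The Cohen--Macaulay equivalence does survive in these cases, but ${\rm sgn}(J)=I$ does not.) Your justification for the restriction should, however, be routed through Proposition~\ref{sgn-ht} rather than through ``non-principal'': non-principality alone does not give ${\rm ht}(I)\geq 2$, as the example above shows; rather, if ${\rm sgn}(J)=I$ is to hold with $I\neq R$, then $J$ is non-principal and Proposition~\ref{sgn-ht} forces ${\rm ht}(I)={\rm ht}({\rm sgn}(J))\geq 2$. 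With that emendation your argument is a correct proof of the corollary in the only regime where its first assertion can hold, and it coincides with the paper's argument there.
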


\begin{proof} If the height of $I$ is $1$, then $I=(x_i)$ for some $I$
and the result is clear. Thus, we may assume that the height of $I$ is
at least $2$. By Theorem~\ref{sgn-invariants} it suffices to show 
that ${\rm sgn}(J)=I$. Let $F_1,\ldots,F_n$ be the rows of the
incidence matrix of $I$, then $d_1F_1,\ldots,d_nF_n$ are the rows of
the incidence matrix of $J$. As $I$ is squarefree, for each $F_i$, the non-zero
entries of $d_iF_i$ are equal to $d_i$. Hence, ${\rm sgn}(d_iF_i)=F_i$
and consequently ${\rm sgn}(J)=I$.
\end{proof}

\begin{definition}\rm
A \textit{weighted oriented graph} $D$ is a simple
graph $G$ in which each edge $\{u,v\}$ of $G$ has a direction $(u,v)$ or
$(v,u)$ and each vertex $v$ of $G$ has a positive weight
$w(v)\in\mathbb{N}_+$. The set $E(D)$ of
directions is the \textit{edge set} of $D$ and 
the \textit{vertex set} of $D$ is the vertex set of $G$.
\end{definition} 

\begin{definition}\rm\cite{cm-oriented-trees,WOG}  
Let $D$ be a weighted oriented graph with vertex set
$V(D)=\{x_1,\ldots,x_n\}$ and let $w_i:=w(x_i)$.   
The
 \textit{edge ideal} of $D$ is the ideal of $R$ given by 
$$I(D):=(\{x_{i}x_{j}^{w_j}\mid (x_{i},x_{j})\in E(D)\}).$$
\end{definition}

If $w_i=1$ for each $x_i$, then $I(D)$ is the usual edge
ideal $I(G)$ of the graph $G$. The interest in studying $I(D)$ comes 
from the study of Reed-Muller typed codes
\cite{min-dis-generalized,oriented-graphs}, because
one can use $I(D)$ to 
compute and estimate the basic parameters of some of these codes 
\cite{carvalho-lopez-lopez,hilbert-min-dis}. 

\begin{corollary}\cite[Corollary~6]{cm-oriented-trees}\label{oct20-17} 
Let $I=I(D)$ be the edge ideal of a
weighted oriented graph with vertices $x_1,\ldots,x_n$ 
and let $w_i$ be the weight of $x_i$. If $\mathcal{U}$ is the
weighted oriented graph  obtained from $D$ by assigning weight $2$ to every $x_i$
with $w_i\geq 2$, then ${\rm sgn}(I)=I(\mathcal{U})$ and 
$I$ is Cohen--Macaulay $($resp. Gorenstein$)$ if and only 
if $I(\mathcal{U})$ is Cohen--Macaulay $($resp. Gorenstein$)$. 
\end{corollary}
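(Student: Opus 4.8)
The plan is to reduce the statement to the single identity ${\rm sgn}(I)=I(\mathcal{U})$: once this is established, the equivalence of the Cohen--Macaulay and Gorenstein properties follows at once from Theorem~\ref{sgn-invariants}(c), applied to $I$ and its signature. As in the proof of Corollary~\ref{coro1}, I would first dispose of the case ${\rm ht}(I)=1$ (where the signature machinery does not apply and a direct check is needed) and then assume ${\rm ht}(I)\geq 2$, so that every row of the incidence matrix $A$ of $I(D)$ has at least one zero entry.

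The core of the argument is a row-by-row computation of ${\rm sgn}(A)$. I would fix a vertex $x_k$ and read off the $k$-th row of $A$ directly from the generators $x_ix_j^{w_j}$ of $I(D)$: in the column corresponding to the arrow $(x_i,x_j)$, the entry equals $w_k$ when $x_k=x_j$ is the head of that arrow, equals $1$ when $x_k=x_i$ is its tail, and equals $0$ otherwise. Since ${\rm ht}(I)\geq 2$, the value $0$ occurs in this row, so its set of distinct entries is a subset of $\{0,1,w_k\}$ containing $0$. In parallel I would write down the $k$-th row of the incidence matrix of $I(\mathcal{U})$, which has the same shape but with $w_k$ replaced by the weight $w_k'$ of $x_k$ in $\mathcal{U}$, namely $w_k'=1$ if $w_k=1$ and $w_k'=2$ if $w_k\geq 2$.

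It then remains to compare the signature of the $k$-th row of $A$ with the $k$-th row of the incidence matrix of $I(\mathcal{U})$, through the following cases. If $w_k=1$, every nonzero entry of the row equals $1$, the row is already tight, and it is left unchanged by the signature, matching $I(\mathcal{U})$. If $w_k\geq 2$ and $x_k$ occurs both as the head of some arrow and as the tail of some other arrow, then the distinct entries of the row are exactly $0<1<w_k$, so the signature fixes the tail-columns ($1\mapsto 1$) and sends the head-columns via $w_k\mapsto 2$; this is precisely the $k$-th row of $I(\mathcal{U})$, where head-columns carry exponent $w_k'=2$ and tail-columns carry exponent $1$. For a vertex occurring only as a tail, the weight is irrelevant to both ideals and both rows carry only the entries $0$ and $1$.

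The step I expect to be the main obstacle is exactly this compression when $w_k\geq 2$: to obtain the correct exponent $2$ rather than $1$ on the head-columns, the value $1$ must genuinely appear in the $k$-th row, that is, $x_k$ must also be the tail of some arrow. The delicate point is therefore to verify, using the conventions governing the weighted oriented graphs under consideration (in particular that a weight-$\geq 2$ vertex appearing as the head of an arrow also appears as a tail), that every weight-$\geq 2$ vertex realizes all three values $0,1,w_k$ in its row, so that the signature collapses $\{0,1,w_k\}$ to $\{0,1,2\}$ as required. Once this is in place, the row-by-row comparison yields that ${\rm sgn}(A)$ equals the incidence matrix of $I(\mathcal{U})$, hence ${\rm sgn}(I)=I(\mathcal{U})$, and Theorem~\ref{sgn-invariants}(c) completes the proof.
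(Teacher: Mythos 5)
You take the same route as the paper: reduce everything to the identity ${\rm sgn}(I)=I(\mathcal{U})$ via Theorem~\ref{sgn-invariants}(c), and compute ${\rm sgn}(A)$ row by row using that the row of $x_k$ has entries in $\{0,1,w_k\}$. The paper's proof is exactly your computation compressed into one line: ``each entry of $F_i$ is $0$, $1$ or $w_i$, so ${\rm sgn}(F_i)$ is obtained from $F_i$ by replacing $w_i$ by $2$ if $w_i\geq 2$''. The step you single out as the main obstacle is the only real content beyond this, and you are right to be suspicious: the paper's definitions contain no convention guaranteeing that a vertex of weight $\geq 2$ occurring as a head also occurs as a tail, and without one the identity fails. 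If $x_k$ is a sink with $w_k\geq 2$, the distinct entries of its row are $\{0,w_k\}$, so the signature sends $w_k$ to $1$, not $2$. Concretely, for $D$ with arrows $(x_1,x_2)$, $(x_3,x_2)$, $(x_1,x_3)$ and $w_2=2$, $w_3=1$, one has $\mathcal{U}=D$ and $I=I(\mathcal{U})=(x_1x_2^2,\,x_3x_2^2,\,x_1x_3)$, while ${\rm sgn}(I)=(x_1x_2,\,x_2x_3,\,x_1x_3)\neq I(\mathcal{U})$. So the hypothesis you isolate (every weight-$\geq 2$ head is also a tail, equivalently the incidence matrix of $I(\mathcal{U})$ is tight) is genuinely needed for ${\rm sgn}(I)=I(\mathcal{U})$; your argument is complete under it, but it cannot be discharged from the paper's conventions, and the paper's own proof passes over this case silently.

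The Cohen--Macaulay/Gorenstein equivalence, however, can be rescued without that hypothesis, and you are one step away from the repair. The $k$-th rows of the incidence matrices of $I$ and $I(\mathcal{U})$ carry $0$, $1$, and the weight ($w_k$, resp.\ $\min(w_k,2)$) in the same columns, and the substitution $w_k\mapsto\min(w_k,2)$ is strictly order-preserving on $\{0,1,w_k\}$; since the signature of a row depends only on the relative order of its entries, one always has ${\rm sgn}(I)={\rm sgn}(I(\mathcal{U}))$. As the generators of $I$ and $I(\mathcal{U})$ have the same supports, the two ideals have the same height, and Theorem~\ref{sgn-invariants}(c) applied to each of them gives the equivalence unconditionally. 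Your preliminary disposal of the height-one case, modeled on the proof of Corollary~\ref{coro1}, is also sensible: the paper's proof of this corollary omits it even though Theorem~\ref{sgn-invariants} assumes height at least $2$.
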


\begin{proof} By Theorem~\ref{sgn-invariants} it suffices to show 
that ${\rm sgn}(I)=I(\mathcal{U})$. Let $A$ be the incidence matrix 
of $I$ and let $F_i$ be the $i$-th row of $A$. Note that each entry of
$F_i$ is equal to $0,1$ or $w_i$. Thus, ${\rm sgn}(F_i)$ is obtained
from $F_i$ by replacing $w_i$ by $2$ if $w_i\geq 2$, and consequently
${\rm sgn}(I)=I(\mathcal{U})$.
\end{proof}

\begin{remark} Let $x_j$ be a sink of $D$ (i.e., a vertex with only
incoming edges). If $x_ix_j^{w_j}\in I(D)$, then the exponent $w_j$
reduces to $1$ in ${\rm sgn}(I(D))$ if ${\rm ht}(I(D))\geq 2$ and
reduces to $0$ if all edges of $D$ contain $x_j$, see
Example~\ref{reviewer}.
\end{remark}

\begin{proposition}\label{reviewer1}
If $I$ is a squarefree monomial ideal of $R$ of height at
least $2$, then ${\rm sgn}(I)=I$. 
\end{proposition}

\begin{proof} Let $A$ be the incidence matrix of $I$ and let 
$c=\{c_1,\ldots,c_q\}$ be the multiset of entries of any non-zero 
row of $A$. Since $I$ has height at least $2$, $c$ has at least one
element equal to $0$ and at least one element equal to $1$. Then, 
the list of entries of $c$ listed in ascending order is
$m=\{m_0,m_1\}$, where $m_0=0$ and $m_1=1$. Let $j_i$ be the position
of $c_i$ relative to $m$, that is, $c_i=m_{j_i}$. If $c_i=1=m_1=m_{j_i}$,
one has $j_i=1$, and if $c_i=0=m_0=m_{j_i}$, one has $j_i=0$. Thus,
$j_i=1$ if $c_i=1$ and $j_i=0$ if $c_i=0$. Thus, 
${\rm sgn}(c)=\{j_1,\ldots,j_q\}=c$ and consequently we obtain the
equality ${\rm sgn}(I)=I$.
\end{proof}

\begin{lemma}\label{nov10-25}
If $I\subset R$ is a monomial ideal and $x_i$ is a variable in $R$, 
then ${\rm sgn}(I)={\rm sgn}(x_iI)$.
\end{lemma}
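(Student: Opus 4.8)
The plan is to reduce the claim to a statement about incidence matrices and then exploit the fact that the signature of a row depends only on the relative order of its entries. First I would identify the minimal generators of $x_iI$. Writing $G(I)=\{x^{v_1},\ldots,x^{v_q}\}$, multiplication by $x_i$ sends this set to $\{x_ix^{v_1},\ldots,x_ix^{v_q}\}$. Since $x_ix^{v_j}\mid x_ix^{v_k}$ would force $x^{v_j}\mid x^{v_k}$, contradicting that $G(I)$ is an anti-chain, these monomials remain pairwise non-comparable and hence form the minimal generating set $G(x_iI)$. Consequently, if $A$ is the incidence matrix of $I$, the incidence matrix $A'$ of $x_iI$ is obtained from $A$ by adding $1$ to every entry of its $i$-th row, while leaving all other rows unchanged.

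Next I would invoke the defining recipe for ${\rm sgn}$. By Eq.~\eqref{nov27-25}, the signature of a row $c=(c_1,\ldots,c_q)$ is computed by listing its distinct entries in ascending order as $m_0<\cdots<m_r$ and recording, for each $c_\ell$, its rank $j_\ell$ in this list. The key observation is that this output depends only on the relative order of the entries of $c$: if a strictly increasing map is applied to each entry of $c$, the distinct values are carried in the same order and each entry keeps its rank, so the signature of the row is unchanged.

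Finally I would apply this observation with the map $t\mapsto t+1$ to the $i$-th row. The $i$-th row of $A'$ is the $i$-th row of $A$ with $1$ added to each entry, so it has the same signature; and every other row of $A'$ coincides with that of $A$. Therefore ${\rm sgn}(A')={\rm sgn}(A)$, and since these matrices determine the respective signature ideals through their columns, ${\rm sgn}(x_iI)={\rm sgn}(I)$.

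The argument is essentially immediate once these two observations are in place; the only point requiring a moment of care is confirming that multiplying through by $x_i$ does not destroy minimality of the generating set, which is exactly what the anti-chain argument settles, so that $A'$ really is just $A$ with a single row shifted by $1$.
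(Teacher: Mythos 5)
Your proposal is correct and follows essentially the same route as the paper: both proofs observe that the incidence matrix of $x_iI$ is that of $I$ with $1$ added to every entry of the $i$-th row, and that adding a constant (or any strictly increasing map) to a row's entries preserves each entry's rank among the distinct values, hence the row's signature. Your explicit anti-chain check that $\{x_ix^{v_1},\ldots,x_ix^{v_q}\}$ is still the minimal generating set is a point the paper's proof takes for granted, so if anything your write-up is slightly more careful.
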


\begin{proof} We may assume $i=1$. Let $a=(a_{1,1},\ldots,a_{1,q})$ be
the first row of the incidence matrix $A$ of $I$. 
Then $\overline{a}=(a_{1,1}+1,\ldots,a_{1,q}+1)$ is the first row of
the incidence matrix of $x_1I$. If we list the distinct entries of $a$ in ascending order 
$$m=\{m_0,m_1,\ldots,m_r\},\ m_0<m_1<\cdots<m_r,$$ 
then, the list of entries of $\overline{a}$ in ascending order
is $\overline{m}=\{m_0+1,m_1+1,\ldots,m_r+1\}$. Hence, 
${\rm sgn}(a)={\rm sgn}(\overline{a})$ because if $j_i$ is the
position 
of $a_{1,i}$ relative to $m$, that is, $a_{1,i}=m_{j_i}$, $0\leq
j_i\leq r$, then $j_i$ is also the position of $a_{1,i}+1$ relative to 
$\overline{m}$. Thus, ${\rm sgn}(a)={\rm sgn}(\overline{a})$, and 
consequently ${\rm sgn}(I)={\rm sgn}(x_1I)$ since the other rows of the
incidence matrices of $I$ and $x_1I$ are equal.
\end{proof}

\begin{lemma}\label{nov30-25} Given integers $n\geq 1$ and $q\geq 1$,
the set ${\rm Sgn}({n,q})$ of 
all ${\rm sgn}(I)$ such that $I$ is minimally generated by $q$
monomials in $n$ variables is finite.
\end{lemma}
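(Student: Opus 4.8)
The plan is to exploit the bound already recorded just after the definition of the signature, namely that every entry of ${\rm sgn}(A)$ is at most $q-1$. This turns the statement into a finite counting argument. First I would observe that for any monomial ideal $I$ minimally generated by $q$ monomials in $n$ variables, the incidence matrix $A$ is $n\times q$, so its signature ${\rm sgn}(A)$ is again an $n\times q$ matrix. By construction, each entry of ${\rm sgn}(A)$ is the position $j_i\in\{0,1,\ldots,r\}$ of the corresponding entry of $A$ relative to the ascending list $\{m_0,\ldots,m_r\}$ of distinct entries of its row, and since $r\leq q-1$ by Eq.~\eqref{nov27-25}, every entry of ${\rm sgn}(A)$ lies in $\{0,1,\ldots,q-1\}$.

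Next I would count. The set of $n\times q$ matrices with entries in $\{0,1,\ldots,q-1\}$ is finite, of cardinality at most $q^{nq}$. Since the assignment $A\mapsto{\rm sgn}(A)$ lands in this finite set, and since ${\rm sgn}(I)$ is by definition the ideal generated by the monomials corresponding to the columns of ${\rm sgn}(A)$, the map $I\mapsto{\rm sgn}(I)$ factors through this finite set of admissible signature matrices. Concretely, the rule sending an $n\times q$ integer matrix $M$ with columns $u_1,\ldots,u_q$ to the monomial ideal $(x^{u_1},\ldots,x^{u_q})$ is a well-defined map whose domain, once restricted to matrices with entries in $\{0,1,\ldots,q-1\}$, is finite; hence its image is finite. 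As ${\rm Sgn}(n,q)$ is contained in that image, it is finite.

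The argument is essentially complete at this point, so there is no serious obstacle: the only thing to verify carefully is the entrywise bound, which is immediate from the construction of ${\rm sgn}(A)$ and is already noted in the text. One could optionally sharpen the count using the tightness of the rows of ${\rm sgn}(A)$ (the distinct entries of each row form an initial segment $\{0,1,\ldots,\ell_i\}$), but this refinement is not needed for finiteness.
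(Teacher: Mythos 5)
Your proof is correct and follows essentially the same route as the paper: both bound every entry of the signature matrix by $q-1$ (since an $n\times q$ matrix's rows each have at most $q$ distinct entries) and conclude $|{\rm Sgn}(n,q)|\leq q^{nq}$. The extra detail you supply about the map $I\mapsto{\rm sgn}(I)$ factoring through the finite set of admissible matrices is a harmless elaboration of the same counting argument.
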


\begin{proof} Let ${\rm sgn}(I)\in{\rm Sgn}(n,q)$ and let
$B=(w_{i,j})$ be the incidence matrix 
of ${\rm sgn}(I)$. Then, $w_{i,j}\leq q-1$ for all $i,j$, and consequently 
$|{\rm Sgn}({n,q})|\leq q^{nq}$.
\end{proof}

\begin{proposition}\label{sgn-ht}
If $I$ is a non-principal monomial ideal of $R$, then ${\rm ht}({\rm
sgn}(I))\geq 2$.
\end{proposition}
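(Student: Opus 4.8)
The plan is to translate the height bound into a combinatorial statement about the incidence matrix ${\rm sgn}(A)$ of ${\rm sgn}(I)$. Recall that the minimal primes of any monomial ideal are face ideals $(x_{i_1},\ldots,x_{i_k})$, so a \emph{proper} monomial ideal $J$ satisfies ${\rm ht}(J)\geq 2$ precisely when no height-one face ideal $(x_i)$ contains $J$; and $(x_i)\supseteq J$ holds if and only if $x_i$ divides every generator of $J$, i.e. the $i$-th row of the incidence matrix of $J$ has no zero entry. Thus the proposition reduces to the two assertions: (1) every row of ${\rm sgn}(A)$ has a zero entry, and (2) ${\rm sgn}(I)\neq R$. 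Note that one genuinely needs both, since the criterion ``no $(x_i)$ contains $J$'' is vacuously true for $J=R$ as well.

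First I would dispatch (1), which is immediate from the definition of the signature. In each row $c=(c_1,\ldots,c_q)$ of $A$ the distinct entries are listed as $m_0<m_1<\cdots<m_r$ and each $c_i=m_{j_i}$ is replaced by its position $j_i$; in particular the smallest entry $m_0$ is recorded as position $0$. Hence every row of ${\rm sgn}(A)$ contains at least one $0$, so no variable $x_i$ divides all the monomials attached to the columns of ${\rm sgn}(A)$. (These columns form an anti-chain by \cite[Lemma~3.7]{signature}, so they are exactly $G({\rm sgn}(I))$; but the containment criterion above only uses the generating set, so minimality is not strictly needed here.)

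The substantive step is (2), and this is where the hypothesis that $I$ is non-principal enters. I would show that no column of ${\rm sgn}(A)$ is the zero vector: if the $j$-th column were zero, then $a_{ij}=\min_k a_{ik}$ in every row $i$, which means $v_j\leq v_k$ componentwise for all $k$, i.e. $x^{v_j}$ divides every element of $G(I)$. Since $I$ is not principal we have $q\geq 2$ and $G(I)$ is an anti-chain of at least two incomparable monomials, a contradiction. Therefore ${\rm sgn}(I)$ has no unit among its generators and is a proper ideal.

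Combining (1) and (2), the ideal ${\rm sgn}(I)$ is a proper monomial ideal that is contained in no height-one face ideal, whence ${\rm ht}({\rm sgn}(I))\geq 2$. I expect the only delicate point to be the bookkeeping in step (2) — keeping straight that a zero column of ${\rm sgn}(A)$ corresponds to a generator of $I$ that divides all the others — rather than any real difficulty; once the reduction is in place, the anti-chain property of $G(I)$ together with the non-principal hypothesis does all the work.
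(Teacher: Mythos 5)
Your proof is correct and follows essentially the same route as the paper: both arguments rest on the observation that every row of ${\rm sgn}(A)$ records its minimum as $0$, so no variable divides all generators of ${\rm sgn}(I)$, together with the anti-chain property of $G(I)$ and non-principality to rule out degeneracy. Your step (2) is in fact slightly more careful than the paper's treatment, since you explicitly exclude a zero \emph{column} of ${\rm sgn}(A)$ (i.e., ${\rm sgn}(I)=R$), whereas the paper only argues that not all \emph{rows} of ${\rm sgn}(A)$ vanish and leaves the properness of ${\rm sgn}(I)$ implicit.
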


\begin{proof} Let $A$ be the incidence matrix of $I$ and let $F_i$ be
the $i$-th row of $A$. If ${\rm sgn}(F_i)=0$ for $i=1,\ldots,n$, then
for each $i$ there is $k_i\in\mathbb{N}$ such that $F_i=k_ie_i$.
Hence, all columns of $A$ are equal to $(k_1,\ldots,k_n)^\top$, where
$n$ is the number of variables of $R$. Consequently, $A$ has only one
column and $I$ is a principal ideal, a contradiction. Thus,  
${\rm sgn}(A)$ has at least one non-zero row. We may assume
that ${\rm sgn}(F_1),\ldots,{\rm sgn}(F_k)$ are the non-zero rows of
${\rm sgn}(A)$. Then, for each $1\leq i\leq k$, ${\rm sgn}(F_i)$ has
at least one entry equal to $0$. Thus, the minimal generators of 
${\rm sgn}(I)$ cannot have a common variable, or equivalently, all 
associated primes of ${\rm sgn}(I)$ have height at least $2$.
Therefore, the height of ${\rm sgn}(I)$ is at least $2$.    
\end{proof}

We now deal with the depth for non-principal monomial ideals of height
$1$.

\begin{proposition}\label{depth-height1} Let $I\subset R$ be a monomial ideal of height $1$. If
$I$ is not a principal ideal of $R$ and $f=\gcd(G(I))$ is the greatest
common divisor of $G(I)$, then there is
a monomial ideal $L$ of height $\geq 2$ such that 
$I=fL$, ${\rm
depth}(R/I)={\rm depth}(R/L)$, and ${\rm sgn}(I)={\rm sgn}(L)$.
\end{proposition}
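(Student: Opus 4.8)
The plan is to construct $L$ explicitly from the greatest common divisor and then verify the three assertions in turn, the depth equality being the essential point. First I would set $f=x^c$ with $c_i=\min_{1\le j\le q}v_{j,i}$, where $G(I)=\{x^{v_1},\dots,x^{v_q}\}$, and define $L$ by $G(L)=\{x^{v_1-c},\dots,x^{v_q-c}\}$, so that $I=fL$ is immediate. Since $\min_j(v_{j,i}-c_i)=0$ for every $i$, each row of the incidence matrix of $L$ has a zero entry; hence no variable $x_i$ divides all generators of $L$, no face ideal $(x_i)$ contains $L$, and therefore ${\rm ht}(L)\ge 2$. Here $L\neq R$, for otherwise some $x^{v_j}=f$ would divide every generator, contradicting that $G(I)$ is an anti-chain with $q\ge 2$ (as $I$ is non-principal).

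Next, for the signature I would factor $f=x_{i_1}\cdots x_{i_m}$ as a product of variables with multiplicity and apply Lemma~\ref{nov10-25} once for each factor: since ${\rm sgn}(x_kJ)={\rm sgn}(J)$ for every monomial ideal $J$ and every variable $x_k$, telescoping these equalities along the chain $L,\ x_{i_1}L,\ x_{i_1}x_{i_2}L,\ \dots,\ fL=I$ yields ${\rm sgn}(I)={\rm sgn}(L)$.

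The main obstacle is the depth equality ${\rm depth}(R/I)={\rm depth}(R/L)$, which I would again reduce to a single multiplication by a variable by proving the general claim: for any proper nonzero monomial ideal $J\subset R$ and any variable $x_i$, one has ${\rm depth}(R/x_iJ)={\rm depth}(R/J)$. Iterating this along the chain above (each intermediate ideal $x_{i_1}\cdots x_{i_t}L$ being a proper nonzero monomial ideal, hence of height $\ge 1$) then gives the result. To prove the claim I would exhibit the short exact sequence of $R$-modules
\begin{equation*}
0\longrightarrow R/J\stackrel{x_i}{\longrightarrow} R/x_iJ\longrightarrow R/(x_i)\longrightarrow 0,
\end{equation*}
where the first map is multiplication by $x_i$: it is injective because $R$ is a domain and $x_ir\in x_iJ$ forces $r\in J$, its image is $(x_i)/x_iJ$, and its cokernel is $R/(x_i)$.

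Since $R/(x_i)\cong K[x_1,\dots,\widehat{x_i},\dots,x_n]$ has depth $n-1$, and since both $R/J$ and $R/x_iJ$ have depth at most $n-1$ (their defining ideals being nonzero, hence of height $\ge 1$), the depth lemma for short exact sequences \cite{BHer,monalg-rev} gives ${\rm depth}(R/x_iJ)\ge\min\{{\rm depth}(R/J),n-1\}={\rm depth}(R/J)$ and ${\rm depth}(R/J)\ge\min\{{\rm depth}(R/x_iJ),n\}={\rm depth}(R/x_iJ)$, whence equality. The delicate points are checking that the degree-shifted sequence is genuinely exact and that the two depth inequalities combine as claimed; both hinge on the elementary observation that $x_iJ=\{x_ih:h\in J\}$ in the domain $R$.
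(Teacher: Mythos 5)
Your proposal is correct, and on two of the three assertions it coincides with the paper: the construction of $L$ by factoring out $f=\gcd(G(I))$ and the observation that every row of the incidence matrix of $L$ has a zero entry (so ${\rm ht}(L)\geq 2$) are the same, as is the signature equality obtained by iterating Lemma~\ref{nov10-25} over the variable factors of $f$. Where you genuinely diverge is the depth equality. Interestingly, both arguments start from what is essentially the same short exact sequence: the paper uses $0\rightarrow R/(I\colon f)\stackrel{f}{\rightarrow} R/I\rightarrow R/(I,f)\rightarrow 0$ and notes $(I\colon f)=L$ and $(I,f)=(f)$, which is exactly the one-step version of your sequence $0\rightarrow R/J\stackrel{x_i}{\rightarrow} R/x_iJ\rightarrow R/(x_i)\rightarrow 0$; but the conclusions are extracted differently. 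The paper invokes the theorem of Caviglia et al.\ \cite{Caviglia-et-al} that ${\rm depth}(R/I)$ equals ${\rm depth}(R/(I\colon f))$ or ${\rm depth}(R/(I,f))$, and then eliminates the second alternative by contradiction, using the auxiliary inequality ${\rm depth}(R/I)\leq{\rm depth}(R/(I\colon f))$ from \cite{depth-monomial} and the fact that $L\neq(0)$. You instead apply the standard depth lemma directly to the exact sequence, using only the elementary bounds ${\rm depth}(R/J)\leq n-1={\rm depth}(R/(x_i))$, and obtain the equality in one stroke, with no dichotomy and no contradiction argument; your verification of exactness (injectivity from cancellation in the domain, image $(x_i)/x_iJ$, cokernel $R/(x_i)$) is sound, and the degree shift you flag is harmless for depth. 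Your route is more self-contained --- it removes the dependence on \cite{Caviglia-et-al} and \cite{depth-monomial} entirely --- and it isolates a reusable general fact, namely that ${\rm depth}(R/x_iJ)={\rm depth}(R/J)$ for every proper nonzero monomial ideal $J$ (indeed your argument works verbatim with any monomial $f$ in place of $x_i$, so the variable-by-variable iteration, while correct, is not even needed). What the paper's version buys in exchange is brevity modulo the cited literature, and the colon-ideal formulation $(I\colon f)=L$, which is the form reused elsewhere in the paper.
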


\begin{proof} The existence of $L$ is clear by the choice of $f$. Consider the short exact sequence 
$$
0\longrightarrow R/(I\colon f)\stackrel{f}{\longrightarrow} R/I
\longrightarrow R/(I,f)\longrightarrow 0,
$$
By \cite[Theorem~3.1]{Caviglia-et-al} (cf. \cite[Corollary
2.12 (vi)]{depth-monomial}), 
one has that either
$${\rm depth}(R/I)={\rm
depth}(R/(I\colon f))\ \mbox{ or }\ {\rm depth}(R/I)={\rm
depth}(R/(I,f)).
$$ 
\quad Note that $(I\colon f)=L$, $(I,f)=(f)$, and ${\rm ht}(L)\geq 2$
since $f=\gcd(G(I))$. Then, one has
$$
{\rm depth}(R/I)={\rm
depth}(R/L)\ \mbox{ or }\ {\rm depth}(R/I)={\rm
depth}(R/(f))=n-1, 
$$
where $n=\dim(R)$. We argue by contradiction assuming that ${\rm
depth}(R/I)\neq {\rm depth}(R/L)$. Then, ${\rm depth}(R/I)=n-1$ and,
by \cite[Corollary 2.12(ii)]{depth-monomial}, we get 
$$n-1={\rm depth}(R/I)\leq {\rm
depth}(R/(I\colon f))={\rm depth}(R/L).
$$ 
Thus, $n-1={\rm depth}(R/I)<{\rm depth}(R/L)$, and consequently ${\rm
depth}(R/L)=n$. Hence, $\dim(R/L)$ is equal to $n$ because ${\rm
depth}(R/L)\leq\dim(R/L)\leq n$. Therefore, ${\rm ht}(L)=0$ and
$L=(0)$, a contradiction. This proves that ${\rm
depth}(R/I)={\rm depth}(R/L)$. From Lemma~\ref{nov10-25}, we obtain the
equality ${\rm
sgn}(I)={\rm sgn}(L)$. 
\end{proof}

\begin{remark}\label{reviewer2}
With the notation of Proposition~\ref{depth-height1}, 
the equality $fL=(f)\cap L$ is not true. For instance letting 
$f=x_1x_2$ and $L=(x_1,x_2)$, one has 
$$
fL=(x_1^2x_2,\, x_1x_2^2)\subsetneq(x_1x_2)=(f)\cap L.
$$
\quad Below we classify when the
equality occurs.
\end{remark}
\begin{lemma}\label{fL}
Let $L$ be an ideal of $R$ and $f\in R$. Then, $fL\subset(f)\cap L$
with equality if and only if $(L\colon f)=L$. 
\end{lemma}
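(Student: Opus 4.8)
The plan is to prove the two inclusions separately and then characterize equality by a cancellation argument. First I would record the two containments that hold with no hypothesis on $R$: the inclusion $fL\subseteq(f)\cap L$ is immediate, since any element $fg$ with $g\in L$ lies in $(f)$ and, as $L$ is an ideal, also lies in $L$; and dually $L\subseteq(L\colon f)$ always holds, because $g\in L$ forces $gf\in L$. Thus the claimed equivalence reduces to showing that the reverse inclusion $(f)\cap L\subseteq fL$ is equivalent to the reverse inclusion $(L\colon f)\subseteq L$.

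Next I would prove the implication $(L\colon f)=L\Rightarrow(f)\cap L=fL$, which needs no special hypothesis. Taking $x\in(f)\cap L$, write $x=fg$ for some $g\in R$; since $x\in L$ this says $fg\in L$, i.e. $g\in(L\colon f)=L$, whence $x=fg\in fL$. Combined with the trivial inclusion this yields $(f)\cap L=fL$.

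For the converse, assume $(f)\cap L=fL$ and take $g\in(L\colon f)$, so that $fg\in L$; since $fg\in(f)$ as well, we get $fg\in(f)\cap L=fL$, and hence $fg=fh$ for some $h\in L$. This is the one place where the ambient ring matters: because $R=K[x_1,\ldots,x_n]$ is an integral domain, $f$ is a non-zero-divisor (for $f\neq 0$; the case $f=0$ is degenerate and does not arise in our application, where $f=\gcd(G(I))$ is a nonzero monomial), so I may cancel $f$ and conclude $g=h\in L$. Therefore $(L\colon f)\subseteq L$, which together with the trivial inclusion gives $(L\colon f)=L$.

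The only genuine obstacle is this cancellation step: over an arbitrary commutative ring the forward direction may fail, but the domain property of the polynomial ring makes $f$ regular and the argument goes through. Everything else is a direct unwinding of the definitions of $(f)$, of the product $fL$, and of the colon ideal $(L\colon f)$, so I expect the write-up to be short.
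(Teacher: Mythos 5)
Your proof is correct and follows essentially the same route as the paper's: both record the trivial inclusions $fL\subseteq(f)\cap L$ and $L\subseteq(L\colon f)$ and then prove the two remaining implications by the same direct element chase. The only difference is that you explicitly justify the cancellation $fg=fh\Rightarrow g=h$ via $R$ being a domain and $f\neq 0$ (also flagging the degenerate case $f=0$), a step the paper's proof uses silently when it concludes $g\in L$ from $gf\in fL$ in the forward direction.
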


\begin{proof} The inclusions $fL\subset(f)\cap L$ and $L\subset(L\colon
f)$ are clear. Thus, it suffices to show that $fL\supset(f)\cap L$ if and only if 
$(L\colon f)\subset L$.

$\Rightarrow$) Take $g\in(L\colon f)$. Then, $gf\in L\cap(f)$, and
consequently $gf\in fL$ and $g\in L$.

$\Leftarrow$) Take $g\in(f)\cap L$. Then, $g=hf$ and $g\in L$, and
consequently $h\in(L\colon f)\subset L$. Thus, one has $g\in fL$ and 
the proof is complete.
\end{proof}

The next result relates the regularity of the ideals $I$ and $L$ of 
Proposition~\ref{depth-height1} for non-principal monomial ideals of height
$1$.

\begin{proposition}\label{reg-height1} Let $I\subset R$ be a monomial ideal of height $1$. If
$I$ is not a principal ideal of $R$ and $f=\gcd(G(I))$ is the greatest
common divisor of $G(I)$, then there is
a monomial ideal $L$ of height $\geq 2$ such that 
$I=fL$, ${\rm reg}(R/I)={\rm reg}(R/L)+\deg(f)$, and ${\rm sgn}(I)={\rm sgn}(L)$.
\end{proposition}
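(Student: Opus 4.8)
The existence of the monomial ideal $L$ with ${\rm ht}(L)\geq 2$ and $I=fL$, together with the equality ${\rm sgn}(I)={\rm sgn}(L)$, is exactly as in the proof of Proposition~\ref{depth-height1}: writing $f=\gcd(G(I))$ and $G(L)=\{x^{v}/f\mid x^{v}\in G(I)\}$, one has $\gcd(G(L))=1$, so no single variable divides all the generators of $L$ and hence ${\rm ht}(L)\geq 2$; since $f$ is a product of variables counted with multiplicity, the equality ${\rm sgn}(I)={\rm sgn}(L)$ follows by repeated application of Lemma~\ref{nov10-25}. It remains to establish the regularity formula ${\rm reg}(R/I)={\rm reg}(R/L)+\deg(f)$, and this is where I would concentrate the argument.

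The plan is to transfer everything to the regularity of the ideals $I$ and $L$ viewed as graded modules, via the standard relation ${\rm reg}(R/J)={\rm reg}(J)-1$, valid for any proper graded ideal $J$ generated in positive degrees. Both $I$ and $L$ are of this type: $L$ is non-principal (otherwise $I=fL$ would be principal) with $\gcd(G(L))=1$, so all its generators have positive degree, and the same holds for $I$. Applying the relation to both ideals reduces the claim to the single identity ${\rm reg}(I)={\rm reg}(L)+\deg(f)$.

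To prove this identity I would use that multiplication by the nonzero monomial $f$ is an injective, degree-raising $R$-module homomorphism $L\to fL=I$, $g\mapsto fg$, which is an isomorphism onto $I$. As graded modules this exhibits $I\cong L(-\deg(f))$, and since twisting shifts regularity, ${\rm reg}(I)={\rm reg}(L(-\deg(f)))={\rm reg}(L)+\deg(f)$. Equivalently, a minimal graded free resolution of $I$ is obtained from one of $L$ by raising every internal degree by $\deg(f)$, so that $\beta_{k,j}(I)=\beta_{k,\,j-\deg(f)}(L)$ for all $k,j$ and the regularity shifts accordingly.

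Combining these facts yields ${\rm reg}(R/I)={\rm reg}(I)-1=\bigl({\rm reg}(L)+\deg(f)\bigr)-1=\bigl({\rm reg}(R/L)+1\bigr)+\deg(f)-1={\rm reg}(R/L)+\deg(f)$, as desired. The only point requiring care is the bookkeeping behind ${\rm reg}(R/J)={\rm reg}(J)-1$: the minimal resolution of $R/J$ is obtained from that of $J$ by prepending $F_0=R$ (which contributes $0$ to the regularity) and reindexing by one homological degree, so that the surviving terms contribute ${\rm reg}(J)-1$; one must observe that ${\rm reg}(J)-1\geq 0$, which is precisely where the hypothesis that $J$ is generated in positive degree enters. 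This is the main technical step, though it is standard and presents no real obstacle.
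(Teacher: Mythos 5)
Your proof is correct, but it reaches the regularity formula by a genuinely different route than the paper. The paper writes $f=x_{i_1}\cdots x_{i_k}$ and peels off one variable at a time: by the dichotomy of \cite{Caviglia-et-al,Dao-Huneke-Schweig}, either ${\rm reg}(R/I)={\rm reg}(R/(I\colon x_{i_1}))+1$ or ${\rm reg}(R/I)={\rm reg}(R/(x_{i_1},I))$; since $(I,x_{i_1})=(x_{i_1})$ has regularity $0$ while ${\rm reg}(R/I)\geq 1$ (a height-one non-principal monomial ideal has a minimal generator of degree at least $2$), the second alternative is excluded, and induction on $k$ finishes. You bypass this entirely by observing that multiplication by $f$ gives a graded isomorphism $I\cong L(-\deg(f))$, so all graded Betti numbers shift, $\beta_{k,j}(I)=\beta_{k,\,j-\deg(f)}(L)$, and the identity ${\rm reg}(R/I)={\rm reg}(R/L)+\deg(f)$ drops out via ${\rm reg}(R/J)={\rm reg}(J)-1$, which you correctly justify using that $I$ and $L$ are proper ideals generated in positive degrees ($L\neq R$ since $I$ is not principal). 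Your argument is shorter, self-contained (no appeal to the colon-ideal dichotomy), and strictly more general: it proves ${\rm reg}(R/fL)={\rm reg}(R/L)+\deg(f)$ for any nonzero homogeneous $f$ and any proper graded ideal $L$ generated in positive degrees, since $R$ is a domain; it also yields the stronger conclusion that the whole minimal free resolution of $I$ is a twist of that of $L$, which recovers ${\rm pd}(R/I)={\rm pd}(R/L)$ and hence the depth equality of Proposition~\ref{depth-height1} as a by-product. What the paper's route buys instead is structural parallelism: it reuses the same external tool already invoked for the depth statement in Proposition~\ref{depth-height1}, keeping the two height-one reductions uniform. The remaining parts of your proposal (existence of $L$ with ${\rm ht}(L)\geq 2$ from $\gcd(G(L))=1$, and ${\rm sgn}(I)={\rm sgn}(L)$ by iterating Lemma~\ref{nov10-25}) coincide with the paper's treatment.
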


\begin{proof} The existence of $L$ is clear by the choice of $f$. We
may assume that $f=x_{i_1}x_{i_2}\cdots x_{i_k}$, $i_1\leq\cdots\leq
i_k$, and write $I=x_{i_1}L_1$, where $L_1=x_{i_2}\cdots x_
{i_k}L$ if $i_k>1$ and $L_1=L$ if $i_k=1$. According to 
\cite{Caviglia-et-al,Dao-Huneke-Schweig}, one has that either 
$$
{\rm reg}(R/I)={\rm reg}(R/(I\colon x_{i_1}))+1\ \mbox{ or }\ 
{\rm reg}(R/I)={\rm reg}(R/(x_{i_1},I)).
$$ 
\quad Note that $(I\colon x_{i_1})=L_1$ and $(I,x_{i_1})=(x_{i_1})$.
Thus,  either ${\rm reg}(R/I)={\rm reg}(R/L_1)+1$ or ${\rm
reg}(R/I)={\rm reg}(R/(x_{i_1}))$. The regularity  of $R/(x_{i_1})$ is
equal to $0$ because $R/(x_{i_1})$ is a polynomial ring over the field
$K$. On the other hand, since $I$ has height $1$ and is not a
principal ideal, it has a minimal generator of degree at least $2$.
Thus, ${\rm reg}(R/I)\geq 1$. Therefore, one has the equality ${\rm reg}(R/I)={\rm
reg}(R/L_1)+1$ and the statement about the regularity follows by induction on $k$.  
The equality ${\rm sgn}(I)={\rm sgn}(L)$ follows from
Proposition~\ref{depth-height1}.
\end{proof}

\begin{lemma}\label{sgn2=sgn}
If $I$ is a monomial ideal of $R$, then ${\rm sgn}({\rm sgn}(I))={\rm sgn}(I)$,
that is, $I$ is in the signature class $[{\rm sgn}(I)]$ of ${\rm
sgn}(I)$ and ${\rm sgn}(L)={\rm sgn}(I)$ for all $L\in[{\rm sgn}(I)]$.   
\end{lemma}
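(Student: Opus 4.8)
The plan is to reduce the statement to an idempotency property of the signature operation on matrices, and then to read off the two assertions about the signature class directly from the definition of the equivalence relation. The key observation will be that the signature ${\rm sgn}(A)$ of the incidence matrix $A$ of $I$ is itself the incidence matrix of ${\rm sgn}(I)$: by \cite[Lemma~3.7]{signature} the columns of ${\rm sgn}(A)$ form an anti-chain in $\mathbb{N}^n$, so they are pairwise distinct and non-comparable, and hence they constitute the minimal generating set $G({\rm sgn}(I))$. Thus computing ${\rm sgn}({\rm sgn}(I))$ amounts to computing ${\rm sgn}({\rm sgn}(A))$, and it suffices to prove the matrix identity ${\rm sgn}({\rm sgn}(A))={\rm sgn}(A)$.

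First I would establish that the row-level signature operation is idempotent, which is where the tightness property enters. Since the signature is applied independently to each row, fix a row $c=(c_1,\ldots,c_q)$ of $A$ with distinct entries listed in ascending order as in Eq.~\eqref{nov27-25}, $m_0<m_1<\cdots<m_r$, and set ${\rm sgn}(c)=(j_1,\ldots,j_q)$, where $c_i=m_{j_i}$. Because every value $m_k$ occurs among the $c_i$, the ranks $j_i$ range over all of $\{0,1,\ldots,r\}$; this is exactly the tightness of ${\rm sgn}(c)$. Listing the distinct entries of ${\rm sgn}(c)$ in ascending order therefore gives $m'_k=k$ for $0\leq k\leq r$, so the rank of the entry $j_i$ relative to this new list equals $j_i$ itself. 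Hence ${\rm sgn}({\rm sgn}(c))={\rm sgn}(c)$. Applying this to every row of $A$ yields ${\rm sgn}({\rm sgn}(A))={\rm sgn}(A)$, and combining with the reduction above gives ${\rm sgn}({\rm sgn}(I))={\rm sgn}(I)$.

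Finally I would derive the two claims about the signature class. The relation $I\sim J$ defined by ${\rm sgn}(I)={\rm sgn}(J)$ is an equivalence relation, and $[{\rm sgn}(I)]=\{J\mid{\rm sgn}(J)={\rm sgn}({\rm sgn}(I))\}$. Using the idempotency just proved, ${\rm sgn}({\rm sgn}(I))={\rm sgn}(I)$, so $I\sim{\rm sgn}(I)$, that is, $I\in[{\rm sgn}(I)]$; and for any $L\in[{\rm sgn}(I)]$ the defining condition reads ${\rm sgn}(L)={\rm sgn}({\rm sgn}(I))={\rm sgn}(I)$, which is the second assertion.

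The argument is short, and the only point requiring care is the reduction in the first paragraph: one must invoke the anti-chain property of the columns of ${\rm sgn}(A)$ to be sure that no two generators of $I$ produce the same (or a comparable) generator of ${\rm sgn}(I)$, so that ${\rm sgn}(A)$ really is the incidence matrix of ${\rm sgn}(I)$ and a second application of the signature is literally ${\rm sgn}({\rm sgn}(A))$. I expect this bookkeeping, rather than the idempotency of the row operation, to be the main (though minor) obstacle.
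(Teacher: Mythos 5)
Your proof is correct, and its core mechanism---the tightness of each row of ${\rm sgn}(A)$ forcing the rank map to act as the identity on a second application---is exactly the argument the paper uses in its main case. The difference is structural: the paper runs this row-level argument only under the hypothesis ${\rm ht}(I)\geq 2$ (its Case (I)), then handles non-principal ideals of height $1$ by factoring $I=fL$ with $f=\gcd(G(I))$ and ${\rm ht}(L)\geq 2$ via Proposition~\ref{depth-height1}, which yields ${\rm sgn}(I)={\rm sgn}(L)$, and finally treats principal ideals as a third case with ${\rm sgn}(I)=R$. You instead observe that the row-level idempotency needs no height hypothesis at all---the minimum entry of a row always receives rank $0$, whether or not that minimum is $0$, so ${\rm sgn}(c)$ is tight for every row $c$---which makes the three-way case split unnecessary once one knows ${\rm sgn}(A)$ is literally the incidence matrix of ${\rm sgn}(I)$. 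You correctly flag that reduction as the one delicate point and discharge it with the anti-chain property of the columns of ${\rm sgn}(A)$ from \cite[Lemma~3.7]{signature}; one can also see it directly, since within each row the rank map is strictly increasing in the entry, so distinct, incomparable columns of $A$ map to distinct, incomparable columns of ${\rm sgn}(A)$. Even the principal case fits your scheme, since there ${\rm sgn}(A)$ is the zero column, corresponding to $G(R)=\{1\}$. What your route buys is uniformity and independence from Proposition~\ref{depth-height1} (and hence from the depth machinery behind it); what the paper's route buys is that height one and the principal case are dispatched by results it needs elsewhere anyway, at the cost of a longer case analysis and an unnecessary appeal to the zero entries of the rows in Case (I).
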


\begin{proof} Case (I) Assume that ${\rm ht}(I)\geq 2$. Let $F_i$ be
the $i$-th row of the incidence matrix $A$ of $I$. Since $F_i$ has at
least one entry equal to $0$, the list of distinct entries of ${\rm
sgn}(F_i)$ is $\{0,1,2,\ldots,\ell_i\}$ for some $\ell_i$, and
consequently ${\rm sgn}({\rm sgn}(F_i))={\rm sgn}(F_i)$. Thus, 
${\rm sgn}({\rm sgn}(I))={\rm sgn}(I)$. 

Case (II) Assume that $I$ is not principal ideal and ${\rm ht}(I)=1$.
Letting $f=\gcd(G(I))$, by Proposition~\ref{depth-height1}, there is a
monomial ideal $L$ of height at least $2$ such that 
$I=fL$, ${\rm
depth}(R/I)$ is equal to ${\rm depth}(R/L)$, and ${\rm sgn}(I)={\rm
sgn}(L)$. Hence, applying Case (I) to $L$, one has 
$$
{\rm sgn}({\rm sgn}(I))={\rm sgn}({\rm sgn}(L))={\rm sgn}(L)=L.
$$

Case (III) Assume that $I$ is a principal ideal. Then, ${\rm
sgn}(I)=(x^{0})=R$ and ${\rm sgn}(R)=R$. Thus one has,
$$
{\rm sgn}({\rm sgn}(I))={\rm sgn}(R)=R={\rm sgn}(I),
$$
and the proof is complete.
\end{proof}

We come to our main result on the depth of quotient rings of monomial
ideals.

\begin{theorem}\label{depth=sgn}
Let $I$ be a non-principal monomial ideal of $R$. Then, 
$$
{\rm depth}(R/I)={\rm depth}(R/{\rm sgn}(I)).
$$
\end{theorem}

\begin{proof} By Theorem~\ref{sgn-invariants}(a), we may assume that the height of
$I$ is $1$. Then, by Proposition~\ref{depth-height1} and letting
$f=\gcd(G(I))$, 
there is
a monomial ideal $L$ of height $\geq 2$ such that 
$I=fL$, ${\rm
depth}(R/I)={\rm depth}(R/L)$, and ${\rm sgn}(I)={\rm sgn}(L)$.
Hence, applying Theorem~\ref{sgn-invariants}(a) to the ideal $L$, we
get the equalities:
$$
{\rm depth}(R/I)={\rm depth}(R/L)={\rm depth}(R/{\rm sgn}(L))={\rm depth}(R/{\rm sgn}(I)).
$$
\quad Thus, ${\rm depth}(R/I)={\rm depth}(R/{\rm sgn}(I))$ and the
proof is complete. 
\end{proof}

The following result shows that the signature lowers the regularity
of $R/I$.  

\begin{theorem}\label{reg-sgn}
Let $I$ be a non-principal monomial ideal of $R$. Then, 
$$
{\rm reg}(R/I)\geq{\rm reg}(R/{\rm sgn}(I)).
$$
\end{theorem}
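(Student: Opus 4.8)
The plan is to follow exactly the template used in the proof of Theorem~\ref{depth=sgn}, reducing the assertion to the height $\geq 2$ case which has already been settled. First I would dispose of the case ${\rm ht}(I)\geq 2$: here the desired inequality ${\rm reg}(R/I)\geq{\rm reg}(R/{\rm sgn}(I))$ is literally the content of Theorem~\ref{sgn-invariants}(d), so nothing further is required. This leaves only the height-one case to handle separately.

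Suppose now that $I$ is a non-principal monomial ideal with ${\rm ht}(I)=1$. Setting $f=\gcd(G(I))$, Proposition~\ref{reg-height1} produces a monomial ideal $L$ of height $\geq 2$ such that
$$
I=fL,\quad {\rm reg}(R/I)={\rm reg}(R/L)+\deg(f),\quad \mbox{and}\quad {\rm sgn}(I)={\rm sgn}(L).
$$
Since ${\rm ht}(L)\geq 2$, Theorem~\ref{sgn-invariants}(d) applies to $L$ and yields ${\rm reg}(R/L)\geq{\rm reg}(R/{\rm sgn}(L))$. Combining this with the identity ${\rm sgn}(I)={\rm sgn}(L)$ and the trivial bound $\deg(f)\geq 0$, I would chain
$$
{\rm reg}(R/I)={\rm reg}(R/L)+\deg(f)\geq{\rm reg}(R/{\rm sgn}(L))={\rm reg}(R/{\rm sgn}(I)),
$$
which is precisely the claim. (In fact the reduction gives the stronger bound ${\rm reg}(R/I)\geq{\rm reg}(R/{\rm sgn}(I))+\deg(f)$, but the weaker form suffices.)

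I do not anticipate a genuine obstacle in this particular argument, since the substantive work has already been carried out elsewhere: the free-resolution degree-shift comparison for ideals of height $\geq 2$ in Theorem~\ref{sgn-invariants}(d), and the height-one splitting $I=fL$ in Proposition~\ref{reg-height1}. The only thing one must verify is that passing from $I$ to $L$ preserves the direction of the inequality, and it does so with room to spare: the extra summand $\deg(f)$ is non-negative and only strengthens the bound, so the reduction to the height $\geq 2$ case is clean and loses nothing.
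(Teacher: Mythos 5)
Your proposal is correct and coincides with the paper's own proof: both reduce to the height-one case via Theorem~\ref{sgn-invariants}(d), then use Proposition~\ref{reg-height1} to write $I=fL$ with ${\rm reg}(R/I)={\rm reg}(R/L)+\deg(f)$ and ${\rm sgn}(I)={\rm sgn}(L)$, and conclude by applying Theorem~\ref{sgn-invariants}(d) to $L$. Your parenthetical observation that the argument actually yields the stronger bound ${\rm reg}(R/I)\geq{\rm reg}(R/{\rm sgn}(I))+\deg(f)$ is accurate and implicit in the paper's displayed chain of inequalities.
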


\begin{proof} By Theorem~\ref{sgn-invariants}(d), we may assume that the height of
the ideal $I$ is equal to $1$. Then, by Proposition~\ref{reg-height1} and letting
$f=\gcd(G(I))$, there is
a monomial ideal $L$ of height $\geq 2$ such that 
$I=fL$, ${\rm reg}(R/I)={\rm reg}(R/L)+\deg(f)$, and ${\rm sgn}(I)={\rm sgn}(L)$.
Hence, applying Theorem~\ref{sgn-invariants}(d) to the ideal $L$, we
get the inequalities:
$$
{\rm reg}(R/I)={\rm reg}(R/L)+\deg(f)\geq {\rm reg}(R/{\rm
sgn}(L))+\deg(f)\geq {\rm reg}(R/{\rm sgn}(I)).
$$
\quad Thus, ${\rm reg}(R/I)\geq{\rm reg}(R/{\rm sgn}(I))$ and the
proof is complete. 
\end{proof}

\subsection{The irreducible decomposition of the signature}
Let $I$ be a monomial ideal of height at least $2$. In this part, 
we show how to obtain the irreducible decomposition of ${\rm sgn}(I)$
from the irreducible decomposition of $I$ by recursively 
applying the shift operation $I\mapsto I_{\rm sft}$.

Let $I$ and $I_{\rm sft}$ be the ideals of
Eqs.~\eqref{i-def} and \eqref{sft-def}, respectively, and let 
$I=\bigcap_{j=1}^r\mathfrak{q}_j$ be the irreducible 
decomposition of $I$ \cite[Theorem~6.1.17]{monalg-rev}, where each
$\mathfrak{q}_j$ is generated by powers of some of the $x_i$'s variables. For
each $\mathfrak{q}_j$, we let 
$$
\mathfrak{q}_j':=
\begin{cases}
((G(\mathfrak{q}_j)\setminus\{x_1^{q_i}\})
\textstyle\bigcup\{x_1^{q_i-q_1+p+1}\})&\mbox{if }x_1^{q_i}\in
G(\mathfrak{q}_j)\mbox{ for some }1\leq i\leq s,\\
\mathfrak{q}_j&\mbox{if }x_1^{q_i}\notin
G(\mathfrak{q}_j)\mbox{ for all }1\leq i\leq s,
\end{cases}
$$
that is, each ideal $\mathfrak{q}_j'$ is obtained from
$G(\mathfrak{q}_j)$ by 
making the substitution $x_1^{q_i}\mapsto x_1^{q_i-q_1+p+1}$. By
construction, $x_1^{q_i}\in G(\mathfrak{q}_j)$ if and only
if $x_1^{q_i-q+p+1}\in G(\mathfrak{q}_j')$. 
Note that $\mathfrak{q}_j\subset\mathfrak{q}_j'$ for all $j$
because $q_i\geq q_i-q_1+p+1\geq 1$ for all $i$ and 
${\rm rad}(\mathfrak{q}_j)={\rm rad}(\mathfrak{q}_j')$ for all $j$, and consequently one
has the inclusion $I\subset \bigcap_{j=1}^r\mathfrak{q}_j'$. 

The following lemma is a sort of duality theorem showing that 
the powers of variables that occur in $G(I)$ is equal to 
$G(\mathfrak{q}_1)\cup\cdots\cup G(\mathfrak{q}_r)$.

\begin{lemma}\cite[Lemma~1]{cm-oriented-trees}\label{duality-of-exponents} 
Let $I$ be a monomial ideal of $R$, with 
$G(I)=\{x^{v_1},\ldots,x^{v_q}\}$, let 
$v_i=(v_{i,1},\ldots,v_{i,n})$ for $i=1,\ldots,q$, and let
$I=\textstyle\bigcap_{j=1}^r\mathfrak{q}_j$ 
be its irreducible decomposition. Then
$$
\{x_j^{v_{i,j}}\vert\, v_{i,j}\geq
1\}=G(\mathfrak{q}_1)\textstyle\bigcup\cdots\bigcup
G(\mathfrak{q}_r).
$$ 
\end{lemma}

We say the irreducible decomposition
$I=\bigcap_{j=1}^r\mathfrak{q}_j$ of a monomial ideal $I$ is
\textit{tight} if for each variable $x_k$, the set of powers of $x_k$
that occur in $G(\mathfrak{q}_1)\textstyle\bigcup\cdots\bigcup
G(\mathfrak{q}_r)$ listed in ascending degree order is
$\{x_k,x_k^2,\ldots,x_k^{\ell_i}\}$ for some $\ell_i$. Note that by
Lemma~\ref{duality-of-exponents}, the incidence matrix $A$ of $I$ is
tight if and only if the irreducible decomposition of $I$ is tight. 

\begin{proposition}\label{irred-i-sgni}
If $I=\bigcap_{j=1}^r\mathfrak{q}_j$ is the irreducible 
decomposition of $I$, then $I_{\rm sft}=\bigcap_{j=1}^r\mathfrak{q}_j'$
is the irreducible decomposition
of $I_{\rm sft}$.
\end{proposition}

\begin{proof} To show the inclusion $I_{\rm
sft}\subset\bigcap_{j=1}^r\mathfrak{q}_j'$, take $x^a\in I_{\rm sft}$.
Let $\mathfrak{q}_j$ be any irreducible component of $I$ and let 
$\mathfrak{q}_j'$ be its corresponding ideal. According to
Eq.~\eqref{sft-def}, we have two
cases to consider. 

Case (A) $x^a=x_1^{j_1}x^{\gamma_{i_1,j_1}}$. Then, 
$x^a\in I$. Hence, $x^a\in\mathfrak{q}_j'$ because $I\subset
\bigcap_{j=1}^r\mathfrak{q}_j'$. 

Case (B) $x^a=x_1^{q_i-q_1+p+1}x^{\epsilon_i}$. Then,  
$x_1^{q_1-p-1}x^a=x_1^{q_i}x^{\epsilon_i}\in I\subset\mathfrak{q}_j$,
and there is $x_k^\ell\in G(\mathfrak{q}_j)$ such that 
$x_1^{q_i}x^{\epsilon_i}=x_k^\ell x^\theta$. If $k\neq 1$, then
$x^{\epsilon_i}=x_k^\ell
x^{\theta_1}\in\mathfrak{q}_j\subset\mathfrak{q}_j'$ because
$x_1\notin{\rm supp}(x^{\epsilon_i})$, and consequently
$x^a\in\mathfrak{q}_j'$. Thus, we may assume that $k=1$. Then, 
$x_1^{q_i}x^{\epsilon_i}=x_1^\ell x^\theta$, and consequently 
$q_i\geq \ell$. If $q_i=\ell$, since $x_1^{q_i}=x_1^\ell\in
G(\mathfrak{q}_j)$, then, by construction of $\mathfrak{q}_j'$, $x_1^{q_i-q_1+p+1}\in G(\mathfrak{q}_j')$ 
and $x^a\in\mathfrak{q}_j'$. If $q_i>\ell$ and $\ell=q_k$ for some
$k$, then, by construction of $\mathfrak{q}_j'$, one has $x_1^{q_k-q_1+p+1}\in\mathfrak{q}_j'$, and since
$q_i>q_k$, we get $x_1^{q_i-q_1+p+1}\in\mathfrak{q}_j'$ and
$x^a\in\mathfrak{q}_j'$. If $q_i>\ell$
and $\ell\neq q_k$ for all $k$, then, by
Lemma~\ref{duality-of-exponents}, 
$x_1^\ell$ has to appear in a minimal generator of $I$ and, 
by Eq.~\eqref{i-def}, we obtain $1\leq \ell\leq p$. Then, 
$q_i-q_1+p+1\geq \ell$, and consequently
$x^a\in\mathfrak{q}_j\subset\mathfrak{q}_j'$. 

To show the inclusion $I_{\rm
sft}\supset\bigcap_{j=1}^r\mathfrak{q}_j'$, take
$x^a\in\bigcap_{j=1}^r\mathfrak{q}_j'$. We claim that
$x_1^{q_1-p-1}x^a\in I$. Take any $\mathfrak{q}_j$, it suffices to
show that $x_1^{q_1-p-1}x^a\in\mathfrak{q}_j$. We can write
$x^a=x_k^\ell x^\theta$ for some $x_k^\ell\in G(\mathfrak{q}_j')$. If
$k\neq 1$, by construction of $\mathfrak{q}_j'$, $x_k^\ell\in
G(\mathfrak{q}_j)$ and $x_1^{q_1-p-1}x^a\in\mathfrak{q}_j$. If $k=1$
and $\ell\neq q_i-q_1+p+1$ for all $i$, then
$\mathfrak{q}_j=\mathfrak{q}_j'$ and
$x_1^{q_1-p-1}x^a\in\mathfrak{q}_j$. If $k=1$
and $\ell=q_i-q_1+p+1$ for some $i$, then
$x_1^\ell\in\mathfrak{q}_j'$ and
$x_1^{q_1-p-1}x_1^\ell=x_1^{q_i}\in\mathfrak{q}_j$. Thus,
$x_1^{q_1-p-1}x^a\in\mathfrak{q}_j$. Therefore, $x_1^{q_1-p-1}x^a\in
I$. By Eq.~\eqref{i-def}, we have two cases to consider.

Case (A) $x_1^{q_1-p-1}x^a=x_1^{j_1}x^{\gamma_{i_1,j_1}}x^\theta$,
$0\leq j_1\leq p$, $x_1\notin{\rm supp}(x^{\gamma_{i_1,j_1}})$. Cancelling out $x_1$ from both sides of the equality
and letting $a=(a_1,\ldots,a_n)$, 
we get $x_2^{a_2}\cdots x_n^{a_n}=x^{\gamma_{i_1,j_1}}x^{\theta_2}$. 
If $a_1\geq p$, then $x^a\in I\subset I_{\rm sft}$ because
$x_1^{a_1}x^{\gamma_{i_1,j_1}}\in I$. If $a_1<p$, then
$q_i-q_1+p+1>p>a_1$. Take any $\mathfrak{q}_j'$. Since $x^a\in
\mathfrak{q}_j'$, we can write $x^a=x_k^\ell x^{\theta_3}$ for some
$x_k^\ell\in G(\mathfrak{q}_j')$. If $k\neq 1$, then
$x_k^\ell\in\mathfrak{q}_j$
 and $x^a\in\mathfrak{q}_j$. If $k=1$, then $a_1\geq \ell$,
 $x_1^\ell\in G(\mathfrak{q}_j')$ and since $q_i-q_1+p+1>a_i\geq \ell$
 for all $i$, $x_1^\ell\in\mathfrak{q}_j$ and $x^a\in\mathfrak{q}_j$.
 Therefore, we have shown that
 $x^a\in\bigcap_{j=1}^r\mathfrak{q}_j=I$ and since $I\subset I_{\rm
 sft}$, we get $x^a\in{\rm I}_{\rm sft}$.

Case (B) $x_1^{q_1-p-1}x^a=x_1^{q_i}x^{\epsilon_i}x^{\theta_1}$,
$1\leq i\leq s$. Then,
$x^a=x_1^{q_i-q_1+p+1}x^{\epsilon_i}x^{\theta_1}\in I_{\rm sft}$.
 \end{proof}

The following theorem complements the main result of
\cite[Theorem~3.11]{signature} showing that two monomial ideals with 
the same signature have essentially the same irreducible
decomposition.  

\begin{theorem}\label{irred-i-sgni-theo} Let $I$ be a monomial ideal of $R$ of height $\geq 2$. 
If $I=\textstyle\bigcap_{j=1}^r\mathfrak{q}_j$ is the 
irreducible decomposition of $I$, then for each $\mathfrak{q}_j$ there
is an irreducible component $\mathfrak{q}_j'$ of ${\rm
sgn}(I)$ such that $\mathfrak{q}_j$ and $\mathfrak{q}_j'$ have the same
radical and $\mathfrak{q}_j\subset\mathfrak{q}_j'$. Furthermore, 
${\rm sgn}(I)=\textstyle\bigcap_{j=1}^r\mathfrak{q}_j'$ is the
irreducible decomposition of ${\rm sgn}(I)$ and can be obtained by 
recursively applying the operation $I\mapsto I_{\rm sft}$ and making 
substitutions of the form $x_k^{q_i}\mapsto x_k^{q_i-q_1+p+1}$ 
in the irreducible decomposition of $I$.
\end{theorem}
 
\begin{proof} This follows from Proposition~\ref{irred-i-sgni} and its proof.
\end{proof}

\section{Associated primes and v-number of a monomial ideal and its
signature}

Let $I$ be a monomial ideal of $R$ of height at least $2$, let $A$
be its incidence matrix, and let $G(I)$ be the minimal generating set
of $I$. In this section we show that the associated primes of $I$ and its signature
${\rm sgn}(I)$ are the same, and show that the v-number of
${\rm sgn}(I)$ is at most the v-number of $I$. 
To avoid repetitions we continue to use the notation and assumptions introduced in
Section~\ref{intro-section} and
Subsection~\ref{tight-matrix-subsection}. 

For convenience recall that we are assuming that $A$ is
not tight, and we may assume that the
first row of $A$ is not tight. Then, after listing the monomials of
$G(I)$ in ascending order with respect to
the powers of $x_1$, there are integers $p\geq 0$, $q_1\leq\cdots\leq q_s$, and
$k_0,\ldots,k_p$, such that 
$q_1-p\geq 2$, $k_i\geq 1$ for $i=0,\ldots,p$, and we can write the
minimal generating set of $I$ as 
\begin{align}\label{dec2-25-1}
G(I)=&\big[\textstyle\bigcup_{j=0}^p\big\{x_1^jx^{\gamma_{1,j}},\ldots,x_1^jx^{\gamma_{i,j}},
\ldots,x_1^jx^{\gamma_{k_j,j}}\big\}
\big]
\bigcup\big\{x_1^{q_i}x^{\epsilon_i}\big\}_{i=1}^s,
\end{align}
where $x_1\notin{\rm supp}(x^{\gamma_{i,j}})$ for all $i,j$ and
$x_1\notin{\rm supp}(x^{\epsilon_i})$ for all $i$. The minimal
generating set of the shift ideal $I_{\rm sft}$ is given by 
\begin{align}\label{dec2-25-2}
G(I_{\rm sft})=\big[\textstyle\bigcup_{j=0}^p\big\{x_1^jx^{\gamma_{1,j}},
\ldots,x_1^jx^{\gamma_{i,j}},\ldots, x_1^jx^{\gamma_{k_j,j}}\big\}\big]
\bigcup\big\{x_1^{\ell_i}x^{\epsilon_i}\big\}_{i=1}^s,
\end{align}
where $\ell_i=q_i-q_1+p+1$ for $i=1,\ldots,s$. We can successively
apply the shift operator to each variable $x_i$ 
to obtain the signature ${\rm sgn}(I)$ of $I$.

We come to another of our main results.
\begin{theorem}\label{ass-v-number}
Le $I$ be a monomial ideal of height $\geq 2$ and let ${\rm v}(I)$ be
its ${\rm v}$-number. Then
\begin{enumerate}
\item[\rm(i)] ${\rm Ass}(I)={\rm Ass}({\rm sgn}(I))$.
\item[\rm(ii)] ${\rm v}({\rm sgn}(I))\leq {\rm v}(I)$.
\item[\rm(iii)] $I$ is unmixed if and only if ${\rm sgn}(I)$ is
unmixed. 
\end{enumerate}
\end{theorem}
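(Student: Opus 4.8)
The plan is to reduce the general statement to a single shift operation $I\mapsto I_{\rm sft}$ on the first variable $x_1$, exactly as in the proofs of Theorems~\ref{sgn-invariants} and \ref{depth=sgn}. By Proposition~\ref{carlitos-vila}, the signature ${\rm sgn}(I)$ is obtained from $I$ by a finite sequence of shift operations, each lowering a single gap. Since the composite of a chain of equalities (resp. of the bijections in part (i) and the inequalities in part (ii)) is again an equality (resp. bijection/inequality), it suffices to prove each of (i), (ii), (iii) for one shift $I\mapsto I_{\rm sft}$, using the explicit descriptions of $G(I)$ and $G(I_{\rm sft})$ in Eqs.~\eqref{dec2-25-1} and \eqref{dec2-25-2}. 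Throughout I would keep $x_1$ as the distinguished variable, so that the monomials $x^{\gamma_{i,j}}$ and $x^{\epsilon_i}$ (all with $x_1\notin{\rm supp}$) are unchanged by the shift, and only the $x_1$-exponents of the ``upper block'' generators $x_1^{q_i}x^{\epsilon_i}$ drop from $q_i$ to $\ell_i=q_i-q_1+p+1$.

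For part (i) the key is to compute colon ideals $(I\colon x^a)$ and match associated primes. Since $I$ and $I_{\rm sft}$ are monomial ideals, ${\rm Ass}$ consists of face (monomial prime) ideals $\mathfrak{p}_F=(x_i\mid i\in F)$, and $\mathfrak{p}_F\in{\rm Ass}(I)$ iff $\mathfrak{p}_F=(I\colon x^a)$ for some monomial $x^a$. The strategy suggested by the introduction is: given $(I\colon x^a)=\mathfrak p$, produce a related monomial $x^b$ with $({\rm sgn}(I)\colon x^b)=\mathfrak p$, and conversely. At the level of a single shift, I would split into the cases $p\geq 1$ and $p=0$ and exploit the colon identities already established in Lemma~\ref{depth-dim-inv}(f), namely $(I_{\rm sft}\colon x_1)=(I\colon x_1)_{\rm sft}$ when $p\geq 1$ and $(I\colon x_1^{q_1-1})=I_{\rm sft}$ when $p=0$. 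The associated-prime bijection would be built by tracking how a witness $x^a$ for $\mathfrak p$ in $I$ transforms under the substitution on $x_1$-exponents: since the ``non-$x_1$'' parts $x^{\epsilon_i},x^{\gamma_{i,j}}$ are untouched, a face ideal $\mathfrak p$ not involving $x_1$ has literally the same witnesses (up to adjusting the $x_1$-power in $x^a$), while the delicate case is $\mathfrak p$ containing $x_1$, where I must verify that $x_1$ belongs to $(I\colon x^a)$ iff it belongs to $({\rm sgn}(I)\colon x^b)$ using that the shift does not change which generators are ``active'' modulo the colon.

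For part (ii), once (i) gives ${\rm Ass}(I)={\rm Ass}({\rm sgn}(I))$, I would show that for each $\mathfrak p\in{\rm Ass}(I)$ the minimal degree of a witness for $\mathfrak p$ can only drop when passing to ${\rm sgn}(I)$: concretely, from a witness $x^a$ with $(I\colon x^a)=\mathfrak p$ I produce $x^b$ with $({\rm sgn}(I)\colon x^b)=\mathfrak p$ and $\deg(x^b)\le\deg(x^a)$, since the transformation replaces powers $x_1^{q_i}$ by the smaller $x_1^{\ell_i}$ and hence never raises the $x_1$-exponent needed in the witness. Taking the minimum over $\mathfrak p$ and all witnesses then yields ${\rm v}({\rm sgn}(I))\le{\rm v}(I)$. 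Part (iii) is then immediate: unmixedness means all $\mathfrak p\in{\rm Ass}$ have the same height as $I$; since ${\rm ht}(I)={\rm ht}({\rm sgn}(I))$ (from Proposition~\ref{sep22-25}(d), or from part (b) of Theorem~\ref{sgn-invariants} together with the fact that ${\rm sgn}$ preserves dimension) and ${\rm Ass}(I)={\rm Ass}({\rm sgn}(I))$ by (i), the two ideals are unmixed simultaneously.

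The main obstacle I anticipate is the bookkeeping for part (i) in the case where the associated prime $\mathfrak p$ contains $x_1$: here the witness monomial $x^a$ genuinely involves the $x_1$-exponents that the shift alters, so I cannot simply copy $x^a$ over. I would handle this by carefully using the structure in Eqs.~\eqref{dec2-25-1}--\eqref{dec2-25-2} to rewrite the condition ``$x_1^t x^a\in I$ for the relevant $t$'' in terms of divisibility by the generators, observe that the shift uniformly translates the upper-block $x_1$-exponents by the constant $q_1-p-1$, and thus match witnesses by translating the $x_1$-exponent of $x^a$ by the same constant; the colon formulas of Lemma~\ref{depth-dim-inv}(f) should make this translation rigorous and simultaneously control the degree drop needed for (ii).
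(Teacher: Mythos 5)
Your global architecture coincides with the paper's: reduce to a single shift $I\mapsto I_{\rm sft}$ via Proposition~\ref{carlitos-vila}, match witnesses $(I\colon x^a)=\mathfrak{p}$ with $(I_{\rm sft}\colon x^b)=\mathfrak{p}$ by adjusting only the $x_1$-exponent, obtain (ii) from $\deg(x^b)\leq\deg(x^a)$ in the $I$-to-$I_{\rm sft}$ direction, and obtain (iii) from (i). Even your translation constant $q_1-p-1$ is the right one: in the paper's hardest case (the old witness $x^a$ already lies in $I_{\rm sft}$) the new exponent is defined as the largest $h$ with $x_1^hx_2^{a_2}\cdots x_n^{a_n}\notin I_{\rm sft}$, and one can check that this $h$ equals $\ell_{i_1}-1=a_1-(q_1-p-1)$, so the maximality device and your uniform translation agree.

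The genuine gap is that you delegate the actual verification to Lemma~\ref{depth-dim-inv}(f), and that lemma cannot carry the weight. It only controls colons by the single variable $x_1$ (or by $x_1^{q_1-1}$ when $p=0$), whereas proving $\mathfrak{p}\in{\rm Ass}(I_{\rm sft})$ requires computing $(I_{\rm sft}\colon x^b)$ for an arbitrary monomial $x^b$; for $p=0$ the identity $I_{\rm sft}=(I\colon x_1^{q_1-1})$ formally yields only ${\rm Ass}(I_{\rm sft})\subseteq{\rm Ass}(I)$, via the embedding $R/(I\colon f)\hookrightarrow R/I$, and gives nothing toward the reverse inclusion. The paper's proof does not invoke (f) at all: it is a direct divisibility analysis from Eqs.~\eqref{dec2-25-1}--\eqref{dec2-25-2}, and nearly all of its length is spent on exactly the step your sketch leaves open, namely the containment $(I_{\rm sft}\colon x^b)\subseteq\mathfrak{p}$ for the transformed witness, which requires auxiliary claims (e.g., that $x_1^{\ell_t}x_2^{a_2}\cdots x_n^{a_n}\notin I_{\rm sft}$), the maximality of $h$, and a separate treatment of $\mathfrak{p}=\mathfrak{m}$; your framing locates the difficulty in whether $x_1$ lies in the colon, but the hard part is excluding extraneous monomials from it. Two further corrections to your recipe: the translation must be applied only when the old witness fails (if $x^a\notin I_{\rm sft}$, the paper shows the same witness works, in both subcases $x_1\in\mathfrak{p}$ and $x_1\notin\mathfrak{p}$), and in the converse direction, when $x_1\notin\mathfrak{p}$, the correct adjustment is multiplication by $x_1^{q_s}$ (which raises the degree), not the constant translation --- which is also why the witness correspondence is degree-monotone only in the one direction needed for (ii). Your reductions for (ii) and (iii) are correct and coincide with the paper's.
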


\begin{proof} (i) By Proposition~\ref{carlitos-vila} it suffices to
show that ${\rm Ass}(I)={\rm Ass}(I_{\rm sft})$. We let
$J:=I_{\rm sft}$ and recall that $I\subset J$. 

(A) To show the inclusion ${\rm Ass}(I)\subset{\rm Ass}(J)$, take
$\mathfrak{p}\in {\rm Ass}(I)$, that is, $(I\colon x^a)=\mathfrak{p}$
for some $x^a$, where $a=(a_1,\ldots,a_n)$. 

$(\mathrm{A}_1)$ $x^a\notin J$. It suffices to show that $(J\colon
x^a)=\mathfrak{p}$. We have two subcases to consider.

$(\mathrm{A}_{1.1})$ $x_1\notin\mathfrak{p}$. To show the inclusion
$\mathfrak{p}\subset(J\colon x^a)$, take $x_k\in\mathfrak{p}$. Then, 
$x_kx^a\subset I\subset J$ and $x_k\in(J\colon x^a)$. To show the
inclusion $\mathfrak{p}\supset(J\colon x^a)$, take $x^c\in(J\colon
x^a)$. Then, $x^cx^a\in J$. If $x^cx^a\in I$, then $x^c\in(I\colon
x^a)=\mathfrak{p}$ and $x^c\in\mathfrak{p}$. If $x^cx^a\notin I$, then
from Eq.~\eqref{dec2-25-2}, 
we can write $x^cx^a=x_1^{\ell_j}x^{\epsilon_j}x^\theta$, where
$\ell_j=q_j-q_1+p+1$ for some $1\leq j\leq s$. Hence,
$$
x^cx_1^{q_1-p-1}x^a=x_1^{\ell_j+q_1-p-1}x^{\epsilon_j}x^\theta=
x_1^{q_j}x^{\epsilon_j}x^\theta,
$$
and $x^cx_1^{q_1-p-1}x^a\in I$. Thus, $x^cx_1^{q_1-p-1}\in(I\colon
x^a)=\mathfrak{p}$ and $x^c\in\mathfrak{p}$ because
$x_1\notin\mathfrak{p}$. Therefore, one has $\mathfrak{p}=(J\colon
x^a)$ and $\mathfrak{p}\in{\rm Ass}(J)$.

$(\mathrm{A}_{1.2})$ $x_1\in\mathfrak{p}$. Then, $x_1\in(J\colon x^a)$ because
$x_1x^a\in I\subset J$ and, from Eq.~\eqref{dec2-25-1}, we can write 
$$
x_1x^a=
\begin{cases}
x_1^jx^{\gamma_{i,j}}x^\theta& \mbox{
for some }0\leq j\leq p\ \mbox{ or}\\
x_1^{q_j}x^{\epsilon_j}x^\theta&\mbox{
for some }1\leq j\leq s.
\end{cases}
$$
\quad Note that $x_1\notin{\rm supp}(x^\theta)$
because $x^a\notin I$ and recall that $q_j=\ell_j+q_1-p-1$. 

$(\mathrm{A}_{1.2.1})$ Let $x_1x^a=x_1^jx^{\gamma_{i,j}}x^\theta$.
Then, $0\leq a_1=j-1\leq p-1$. To show the inclusion $(J\colon
x^a)\subset\mathfrak{p}$, take $x^c\in(J\colon x^a)$, i.e., $x^cx^a\in J$. We may assume
$x_1\notin{\rm supp}(x^c)$, because otherwise $x^c\in\mathfrak{p}$. If
$x^cx^a\in I$, then $x^c\in(I\colon x^a)=\mathfrak{p}$ and
$x^c\in\mathfrak{p}$. If $x^cx^a\notin I$, then, from
Eqs.\eqref{dec2-25-1} and \eqref{dec2-25-2}, we can write 
$x^cx^a=x_1^{\ell_i}x^{\epsilon_i}x^\delta$, and $a_1\geq\ell_i\geq
p+1$, a contradiction since $a_1\leq p-1$. This proves that $(J\colon
x^a)\subset\mathfrak{p}$. To show the inclusion $(J\colon
x^a)\supset\mathfrak{p}$, take $x_k\in\mathfrak{p}$. Then,  
$x_kx^a\in I\subset J$ and $x_k\in(J\colon x^a)$. Thus, 
$\mathfrak{p}\in{\rm Ass}(J)$.

$(\mathrm{A}_{1.2.2})$ Let $x_1x^a=x_1^{q_j}x^{\epsilon_j}x^\theta$. 
Then, $a_1=q_j-1$ and noticing that $q_j-1\geq q_j-q_1+p+1=\ell_j$,
we get 
the equalities
$$
x^a=x_1^{q_j-1}x^{\epsilon_j}x^\theta=x_1^{\ell_j}x^{\epsilon_j}x^{\theta_1}.
$$
\quad Thus, $x^a\in J$, a contradiction. This means that this case
cannot occur.

Therefore from $(\mathrm{A}_{1.2.1})$ and $(\mathrm{A}_{1.2.2})$ one has 
$\mathfrak{p}=(J\colon x^a)$ and $\mathfrak{p}\in{\rm Ass}(J)$.

$(\mathrm{A}_2)$ $x^a\in J$. As $x^{a}\notin I$ and 
$\mathfrak{p}=(I\colon x^a)$, from Eqs.\eqref{dec2-25-1} and
\eqref{dec2-25-2}, there is $i$ such that 
$$x^a=x_1^{\ell_i}x^{\epsilon_i}x^\theta\ \mbox{ and }\
a_1\geq\ell_i=q_i-q_1+p+1\geq p+1. 
$$
\quad Note that
$x_1\in\mathfrak{p}$. Indeed,
$x_1^{q_i-\ell_i}x^a=x_1^{q_i}x^{\epsilon_i}x^\theta$ and consequently
$x_1^{q_i-\ell_i}x^a\in I$, that is, $x_1^{q_i-\ell_i}\in(I\colon
x^a)=\mathfrak{p}$ and $x_1\in\mathfrak{p}$. Then, $x_1x^a\in I$. If 
$x_1x^a=x_1^{j_1}x^{\gamma_{i_1,j_1}}x^{\beta}$ for some $1\leq
j_1\leq p$, then $x_1\notin{\rm
supp}(x^\beta)$ because $x^a\notin I$, 
$x^a=x_1^{j_1-1}x^{\gamma_{i_1,j_1}}x^{\beta}$, and $p<
a_1=j_1-1\leq p-1$, a contradiction. Thus, we can write
$x_1x^a=x_1^{q_{i_1}}x^{\epsilon_{i_1}}x^{\theta_1}$ with $x_1\notin{\rm
supp}(x^{\theta_1})$. Then
\begin{equation}\label{nov7-25}
x^a=x_1^{q_{i_1}-1}x^{\epsilon_{i_1}}x^{\theta_1}\ \mbox{ and }\ 
a_1=q_{i_1}-1.
\end{equation}

Let $h$ be the largest
integer such that $x_1^hx_2^{a_2}\cdots x_n^{a_n}\notin J$.
This integer is well defined because $x^a\in J$ and $x_2^{a_2}\cdots
x_n^{a_n}\notin J$; this follows readily recalling that $x^a\notin I$ and 
from Eqs.~\eqref{dec2-25-1} and \eqref{dec2-25-2}. Letting $x^b:=x_1^hx_2^{a_2}\cdots x_n^{a_n}$, one
has $x_1x^b\in J$ and $a_1>h$. If
$x_1x^b=x_1^{j_2}x^{\gamma_{i_2,j_2}}x^{\beta_2}$
for some $1\leq j_2\leq p$, 
then $x_2^{a_2}\cdots x_n^{a_n}=x^{\gamma_{i_2,j_2}}x^{\beta_3}$ and 
$x^a=x_1^{a_1}x^{\gamma_{i_2,j_2}}x^{\beta_3}\in I$ because $a_1\geq
p$, a contradiction. Hence we can write
$x_1x^b=x_1^{\ell_j}x^{\epsilon_j}x^\delta$ for some $j$. Noticing
that $x_1\notin{\rm supp}(x^\delta)$ because $x^b\notin J$, one has 
$$
x^b=x_1^{\ell_j-1}x^{\epsilon_j}x^\delta\ \mbox{ and }\ a_1>h=\ell_j-1\geq
p.
$$

In what follows we show that $\mathfrak{p}\in{\rm Ass}(J)$ by proving
that $\mathfrak{p}=(J\colon x^b)$.  

To show the inclusion $\mathfrak{p}\subset(J\colon x^b)$, take
$x_k\in\mathfrak{p}$. Then, $x_kx^a\in I$. If $k=1$, we know that
$x_1x^b\in J$ by the choice of $h$ and $x_1\in(J\colon x^b)$. Thus, we
may assume that $k\neq 1$. We consider two cases. If
$x_kx^a=x_1^jx^{\gamma_{i,j}}x^\theta$ for some $0\leq j\leq p$, then
cancelling out $x_1$ from both sides of the equality one has 
$$x_kx_2^{a_2}\cdots x_n^{a_n}=x^{\gamma_{i,j}}x^{\theta_1}
\ \mbox{and }\ x_kx^b=x_kx_1^hx_2^{a_2}\cdots
x_n^{a_n}=x_1^hx^{\gamma_{i,j}}x^{\theta_1}.
$$
\quad Thus, $x_kx^b\in I\subset J$ because $h\geq p$, and
$x_k\in(J\colon x^b)$. If $x_kx^b=x_1^{q_t}x^{\epsilon_t}x^\delta$ for
some $t$, then $a_1>h\geq q_t\geq \ell_t\geq p+1$ and cancelling out
$x_1$ from both sides of the equality one has 
\begin{equation}\label{nov8-25}
x_kx_2^{a_2}\cdots x_n^{a_n}=x^{\epsilon_t}x^{\delta_1}
\ \mbox{ and }\ x_kx^b=x_kx_1^hx_2^{a_2}\cdots
x_n^{a_n}=x_1^hx^{\epsilon_t}x^{\delta_1}.
\end{equation}
\quad We claim that $x_1^{\ell_t}x_2^{a_2}\cdots x_n^{a_n}\notin J$.
We argue by contradiction assuming that $x_1^{\ell_t}x_2^{a_2}\cdots
x_n^{a_n}\in J$. If 
$x_1^{\ell_t}x_2^{a_2}\cdots
x_n^{a_n}=x_1^{j_3}x^{\gamma_{i_3,j_3}}x^{\beta_4}$ for some 
$0\leq j_3\leq p$, then cancelling out $x_1$ one has 
$$x_2^{a_2}\cdots
x_n^{a_n}=x^{\gamma_{i_3,j_3}}x^{\beta_5} \mbox{ and }\  x^a=x_1^{a_1}x_2^{a_2}\cdots
x_n^{a_n}=x_1^{a_1}x^{\gamma_{i_3,j_3}}x^{\beta_5}.
$$
\quad Hence, $x^a\in I$ because $a_1\geq p+1$, a contradiction. If 
$x_1^{\ell_t}x_2^{a_2}\cdots
x_n^{a_n}=x_1^{\ell_\lambda}x^{\epsilon_\lambda}x^u$ for some
$\lambda$, then $\ell_t\geq\ell_\lambda$,
$q_t\geq q_\lambda$, and cancelling out $x_1$ one has 
$$x_2^{a_2}\cdots
x_n^{a_n}=x^{\epsilon_\lambda}x^{u_1} \mbox{ and }\  x^a=x_1^{a_1}x_2^{a_2}\cdots
x_n^{a_n}=x_1^{a_1}x^{\epsilon_\lambda}x^{u_1}.
$$
\quad Therefore, $x^a=x_1^{a_1}x^{\epsilon_\lambda}x^{u_1}\in I$ because
$a_1\geq q_t\geq q_\lambda$, a contradiction. This proves the claim that
$x_1^{\ell_t}x_2^{a_2}\cdots x_n^{a_n}\notin J$. Then, $h\geq\ell_t$ by
the choice of $h$. From Eq.~\eqref{nov8-25}, we get $x_kx^b=
x_1^hx^{\epsilon_t}x^{\delta_1}$ and since $h\geq\ell_t$, one has
$x_kx^b\in J$. Thus, $x_k\in(J\colon x^b)$. This proves the inclusion
$\mathfrak{p}\subset(J\colon x^b)$. 

To show the inclusion $\mathfrak{p}\supset(J\colon x^b)$, take
$x^c\in(J\colon x^b)$ and recall that $x_1\in\mathfrak{p}$. 
If $\mathfrak{p}$ is the maximal ideal $\mathfrak{m}=(x_1,\ldots,x_n)$, then
$\mathfrak{p}=\mathfrak{m}\subset(J\colon x^b)\subset\mathfrak{m}$ and
$\mathfrak{m}\in{\rm Ass}(J)$. Thus, we may assume
that $\mathfrak{p}\subsetneq\mathfrak{m}$. We argue
by contradiction assuming that $x_k$ does not divide $x^c$ for all
$x_k\in\mathfrak{p}$, that is,
$x^c=\prod_{x_i\notin\mathfrak{p}}x_i^{c_i}$. The product is taken
over a nonempty set because $\mathfrak{p}\subsetneq\mathfrak{m}$.
Since $x^cx^b\in J$, there are two cases to
consider. First we assume that
$x^cx^b=x_1^jx^{\gamma_{i,j}}x^{\theta_2}$. Then,
$$
x_1^{a_1-h}x^cx^b=x^cx^a=x_1^{a_1-h}x_1^jx^{\gamma_{i,j}}x^{\theta_2}
$$
and $x^cx^a\in I$. Thus, $x^c\in(I\colon x^a)=\mathfrak{p}$, a
contradiction. This means that this case cannot occur. Hence, we can
write $x^cx^b=x_1^{\ell_t}x^{\epsilon_t}x^{\delta_1}$ for some $t$.
Multiplying by $x_1^{a_1-h}$, one has 
$$
x^ax^c=x_1^{a_1-h}x^cx^b=x_1^{a_1-h}x_1^{\ell_t}x^{\epsilon_t}x^{\delta_1}=
x_1^{a_1-h+\ell_t}x^{\epsilon_t}x^{\delta_1}.
$$
\quad Hence, as $x^cx^a\notin I$ because $x^c\notin(I\colon
x^a)=\mathfrak{p}$, we get that $a_1-h+\ell_t<q_t$, otherwise
$x^cx^a\in I$. Therefore, recalling that from Eq.~\eqref{nov7-25} we
have $a_1=q_{i_1}-1$, one obtains
\begin{align*}
&q_1-p-1=q_t-\ell_t>a_1-h=(q_{i_1}-1)-h=q_{i_1}-1-h=(\ell_{i_1}+q_1-p-1)-1-h,
\end{align*}
and cancelling out $q_1-p-1$ from the ends, we obtain that
$0>\ell_{i_1}-1-h$, that is, $h\geq \ell_{i_1}$. Then, by the choice 
of $h$, one has $x_1^{\ell_{i_1}}x_2^{a_2}\cdots x_n^{a_n}\notin J$.
From the equality $x^a=x_1^{q_{i_1}-1}x^{\epsilon_{i_1}}x^{\theta_1}$
of  Eq.~\eqref{nov7-25} it follows that $x_2^{a_2}\cdots
x_n^{a_n}=x^{\epsilon_{i_1}}x^w$ for some $x^w$ and consequently 
$$
x_1^{\ell_{i_1}}x_2^{a_2}\cdots
x_n^{a_n}=x_1^{\ell_{i_1}}x^{\epsilon_{i_1}}x^w\in J,
$$
a contradiction. Then, $x^c\in\mathfrak{p}$, and we have the 
inclusion $\mathfrak{p}\supset(J\colon x^b)$.

(B) To show the inclusion ${\rm Ass}(I)\supset{\rm Ass}(J)$, take
$\mathfrak{p}\in {\rm Ass}(J)$, that is, $(J\colon x^b)=\mathfrak{p}$
for some $x^b$, where $b=(b_1,\ldots,b_n)$. Then, $x^b\notin J$ and
$x^b\notin I$ because $I\subset J$. 

$(\mathrm{B}_1)$ $x_1\in\mathfrak{p}$. One has, $x_1x^b\in J$ and,  by
Eq.~\eqref{dec2-25-2}, we can write $x_1x^b$ as:
$$
x_1x^b=
\begin{cases}
x_1^jx^{\gamma_{i,j}}x^\theta& \mbox{
for some }0\leq j\leq p\ \mbox{ or}\\
x_1^{\ell_j}x^{\epsilon_j}x^\theta&\mbox{
for some }1\leq j\leq s,
\end{cases}
$$
where $\ell_j=q_j-q_1+p+1$. Note that $x_1\notin{\rm supp}(x^\theta)$ because $x^b\notin J$.

$(\mathrm{B}_{1.1})$ Let $x_1x^b=x_1^jx^{\gamma_{i,j}}x^\theta$. We will
prove the equality $(I\colon
x^b)=\mathfrak{p}$. One has $x_1\in(I\colon x^b)$ and $b_1=j-1\leq p-1$. Take
$x_k\in\mathfrak{p}$, $k\neq 1$. Then, $x_kx^b\in J$. 
If $x_kx^b=x_1^{j_1}x^{\gamma_{i_1,j_1}}x^{\theta_1}$, then 
$x_k\in(I\colon x^b)$. If
$x_kx^b=x_1^{\ell_{j_1}}x^{\epsilon_{j_1}}x^{\theta_1}$, then $b_1\geq
\ell_{j_1}\geq p+1$, a contradiction because $b_1\leq p-1$. Hence, one
has $\mathfrak{p}\subset(I\colon x^b)$. To show the inclusion
$\mathfrak{p}\supset(I\colon x^b)$, take $x^c\in(I\colon x^b)$. Then, 
$x^cx^b\in I\subset J$ and $x^c\in(J\colon x^b)=\mathfrak{p}$. Thus, 
$x^c\in\mathfrak{p}$. 
this proves that $(I\colon
x^b)=\mathfrak{p}$ and $\mathfrak{p}\in{\rm Ass}(I)$. 

$(\mathrm{B}_{1.2})$ Let $x_1x^b=x_1^{\ell_j}x^{\epsilon_j}x^\theta$ for some
$1\leq j\leq s$. Then, $x_1\notin{\rm supp}(x^\theta)$ because
$x^b\notin J$, and consequently, $b_1=\ell_j-1\geq p$. We let 
$$   
x^a:=x_1^{b_1+q_1-p-1}x_2^{b_2}\cdots x_n^{b_n}=x_1^{q_1-p-1}x^b.
$$
\quad We claim that $x^a\notin I$. We argue by contradiction assuming
that $x^a\in I$. There are two cases to consider. If
$x^a=x_1^{j_1}x^{\gamma_{i_1,j_1}}x^\delta$ for some $0\leq j_1\leq p$, we
can eliminate $x_1$ from both sides of this equality to obtain 
$x_2^{b_2}\cdots x_n^{b_n}=x^{\gamma_{i_1,j_1}}x^{\delta_1}$ and 
$$
x^b=x_1^{b_1}(x^{\gamma_{i_1,j_1}}x^{\delta_1}).
$$
\quad Hence, recalling that $b_1=\ell_j-1\geq p$ and $0\leq j_1\leq
p$, we get 
$$
x^b=x_1^{p}(x^{\gamma_{i_1,j_1}}x^{\delta_2})=(x_1^{j_1}x^{\gamma_{i_1,j_1}})x^{\delta_3}\in
I,
$$
a contradiction. The other case to consider is 
$x^a=x_1^{q_i}x^{\epsilon_i}x^\delta$ for some $1\leq i\leq s$. Then, 
$$
b_1+(q_1-p-1)=b_1+(q_i-\ell_i)\geq q_i,\mbox{ and }b_1\geq \ell_i\geq
p+1.
$$
\quad Eliminating $x_1$ from both sides of the equality
$x^a=x_1^{q_i}x^{\epsilon_i}x^\delta$, we obtain 
$x_2^{b_2}\cdots x_n^{b_n}=x^{\epsilon_i}x^{\delta_1}$. 
Thus,
$x^b=x_1^{b_1}(x^{\epsilon_i}x^{\delta_1})=x_1^{\ell_i}x^{\epsilon_i}x^{\delta_2}\in
J$ because $b_1\geq\ell_i$, a contradiction. This shows that
$x^a\notin I$.

We now prove that $(I\colon x^a)=\mathfrak{p}$. From the equalities:
$$
x_1x^a=(x_1^{b_1+1}x_2^{b_2}\cdots
x_n^{b_n})(x_1^{q_1-p-1})=(x_1x^b)(x_1^{q_j-\ell_j})=
(x_1^{\ell_j}x^{\epsilon_j}x^\theta)(x_1^{q_j-\ell_j})=
x_1^{q_j}x^{\epsilon_j}x^\theta,
$$
we get $x_1x^a\in I$ and $x_1\in(I\colon x^a)$. Now take
$x_k\in\mathfrak{p}$, $k\neq 1$. Then, $x_kx^b\in J$ because
$\mathfrak{p}=(J\colon x^b)$. There are two cases to consider. If
$x_kx^b=x_1^{j_1}x^{\gamma_{i_1,j_1}}x^{\delta}$, $0\leq j_1\leq p$, 
then $x_kx^a\in I$ because $x_kx^b\in I$ and $a\geq b$. Thus, $x_k\in
(I\colon x^a)$. If $x_kx^b=x_1^{\ell_i}x^{\epsilon_i}x^{\delta}$, then
$$
x_kx^a=x_1^{q_1-p-1}(x_kx^b)=x_1^{q_i-\ell_i}(x_1^{\ell_i}x^{\epsilon_i}x^{\delta})=
x_1^{q_i}x^{\epsilon_i}x^{\delta}
$$
and $x_kx^a\in I$. Thus, $x_k\in(I\colon x^a)$. This proves that 
$\mathfrak{p}\subset(I\colon x^a)$. 

To show the inclusion $(I\colon x^a)\subset\mathfrak{p}$, take
$x^c\in(I\colon x^a)$, i.e., $x^cx^a\in I$. If $x_1\in{\rm
supp}(x^c)$, then $x^c\in\mathfrak{p}$ because $x_1\in\mathfrak{p}$.
Thus, we may assume that $x_1\notin{\rm supp}(x^c)$. If
$x^cx^a=x_1^{j_1}x^{\gamma_{i_1,j_1}}x^{\theta}$, $0\leq j_1\leq p$, we
can eliminate $x_1$ from both sides of this equality and obtain
$$
x^cx_2^{b_2}\cdots x_n^{b_n}=x^{\gamma_{i_1,j_1}}x^{\theta_1}\ 
\mbox{ and }\ x^cx^b=x_1^{b_1}x^{\gamma_{i_1,j_1}}x^{\theta_1}.
$$
\quad Then, $x^cx^b\in I\subset J$ because $b_1\geq p\geq j_1\geq 0$.
Consequently, $x^c\in(J\colon x^b)=\mathfrak{p}$ and
$x^c\in\mathfrak{p}$. If $x^cx^a=x_1^{q_i}x^{\epsilon_i}x^{\theta}$,
then $b_1+q_1-p-1\geq q_i$ and since $q_1-p-1=q_i-\ell_i$, one has
$b_1\geq\ell_i$. Eliminating $x_1$ from both sides of the
equality above, one has 
$x^cx_2^{b_2}\cdots x_n^{b_n}=x^{\epsilon_i}x^{\theta_1}$. Hence,  
$x^cx^b=x_1^{\ell_i}x^{\epsilon_i}x^{\theta_2}$ because
$b_1\geq\ell_i$. Consequently, $x^c\in(J\colon x^b)=\mathfrak{p}$ and
$x^c\in\mathfrak{p}$. This proves that $(I\colon
x^a)\subset\mathfrak{p}$. Therefore, $(I\colon x^a)=\mathfrak{p}$ 
and $\mathfrak{p}\in{\rm Ass}(I)$. 

$(\mathrm{B}_2)$ $x_1\notin\mathfrak{p}$.  Take
$x_k\in\mathfrak{p}=(J\colon x^b)$.
Then, $x_kx^b\in J$ and, from Eq.~\eqref{dec2-25-2}, we can write 
$$
x_kx^b=
\begin{cases}
x_1^jx^{\gamma_{i,j}}x^\theta& \mbox{
for some }0\leq j\leq p\ \mbox{ or}\\
x_1^{\ell_j}x^{\epsilon_j}x^\theta&\mbox{
for some }1\leq j\leq s.
\end{cases}
$$
\quad Note that $x_k\notin{\rm supp}(x^\theta)$ because $x^b\notin J$.
Recalling that $q_1\leq q_2\leq\cdots\leq q_s$, we let 
$$
x^a:=x_1^{q_s+b_1}x_2^{b_2}\cdots x_n^{b_n}=x_1^{q_s}x^b. 
$$
\quad We claim that $x^a\notin I$. Indeed, if $x^a=x_1^{q_s}x^b\in
I\subset J$, then $x_1^{q_s}\in(J\colon x^b)=\mathfrak{p}$ and
$x_1\in\mathfrak{p}$, a contradiction.

$(\mathrm{B}_{2.1})$ Let $x_kx^b=x_1^jx^{\gamma_{i,j}}x^\theta$.
Cancelling out $x_1$ from both sides of this equality, one obtains 
the equality $x_kx_2^{b_2}\cdots
x_n^{b_n}=x^{\gamma_{i,j}}x^{\theta_1}$. Hence, 
$$
x_kx^a=x_k(x_1^{q_s+b_1}x_2^{b_2}\cdots
x_n^{b_n})=x_1^{q_s+b_1}x^{\gamma_{i,j}}x^{\theta_1},
$$
and since $q_s\geq\ell_s=q_s-q_1+p+1\geq p+1$, we get that $x_kx^a\in
I$ and $x_k\in(I\colon x^a)$.

$(\mathrm{B}_{2.2})$ Let $x_kx^b=x_1^{\ell_j}x^{\epsilon_j}x^\theta$. 
Cancelling out $x_1$ from both sides of this equality, one obtains 
the equality $x_kx_2^{b_2}\cdots
x_n^{b_n}=x^{\epsilon_j}x^{\theta_1}$. Hence, 
$$
x_kx^a=x_k(x_1^{q_s+b_1}x_2^{b_2}\cdots
x_n^{b_n})=x_1^{q_s+b_1}x^{\epsilon_j}x^{\theta_1},
$$
and since 
$q_s\geq q_j$, 
we get that $x_kx^a\in
I$ and $x_k\in(I\colon x^a)$.

Therefore, from $(\mathrm{B}_{2.1})$ and $(\mathrm{B}_{2.2})$, one has 
$\mathfrak{p}\subset(I\colon x^a)$. To show the inclusion
$\mathfrak{p}\supset(I\colon x^a)$, take $x^c\in(I\colon x^a)$. Then,  
$x^cx^a=x^c(x_1^{q_s}x^b)\in I\subset J$. Thus,  
$x^cx_1^{q_s}\in(J\colon x^b)=\mathfrak{p}$ and since
$x_1\notin\mathfrak{p}$, one has $x^c\in\mathfrak{p}$. This proves that
$(I\colon x^a)=\mathfrak{p}$ 
and $\mathfrak{p}\in{\rm Ass}(I)$.

(ii) This part follows from the proof of part (i) and
Proposition~\ref{carlitos-vila}. Indeed, pick
$\mathfrak{p}\in{\rm Ass}(I)$ and $x^a$ such that $(I\colon
x^a)=\mathfrak{p}$ and $\deg(x^a)={\rm v}(I)$. By the proof of part
(i) there is $x^b$ such that $(J\colon x^b)=\mathfrak{p}$ and
$\deg(x^b)\leq\deg(x^a)$. Hence, by Proposition~\ref{carlitos-vila},
 ${\rm v}(J)\leq {\rm v}(I)$. 

(iii) This follows at once from part (i).
\end{proof}

\begin{corollary}\label{ass-v-number-coro} 
{\rm(A)} If $\mathfrak{p}=(I\colon x^a)\in{\rm Ass}(I)$, $x^a=x_1^{a_1}\cdots
x_n^{a_n}$, and
$J=I_{sft}$, then  
$$
\mathfrak{p}=
\begin{cases}
(J\colon x^a)& \mbox{
if }x^a\notin J,\\
(J\colon x_1^hx_2^{a_2}\cdots x_n^{a_n})&\mbox{
if }x^a\in J \mbox{ and }h=\max\{i\mid x_1^ix_2^{a_2}\cdots
x_n^{a_n}\notin J\}.
\end{cases}
$$
\quad {\rm(B)} If $\mathfrak{p}=(J\colon x^b)\in{\rm Ass}(J)$, $x^b=x_1^{b_1}\cdots
x_n^{b_n}$, and $J=I_{sft}$, then  
$$
\mathfrak{p}=
\begin{cases}
(I\colon x^b)& \mbox{
if }x_1\in\mathfrak{p}\mbox{ and }x_1x^b=x_1^jx^{\gamma_{i,j}}x^\theta,\\
(I\colon x_1^{q_1-p-1}x^b)&\mbox{
if }x_1\in\mathfrak{p}\mbox{ and
}x_1x^b=x_1^{\ell_j}x^{\epsilon_j}x^\theta,
\\
(I\colon x_1^{q_s}x^b)&\mbox{
if }x_1\notin\mathfrak{p}.
\end{cases}
$$
\end{corollary}
\begin{proof} This follows from the proof of
Theorem~\ref{ass-v-number}(i).
\end{proof}

\section{Examples: Illustrating the shift and polarization
operations}\label{section-examples} 

In this section, in Example~\ref{signature-example}, we illustrate 
how the procedure given in Section~\ref{tight-matrix-section} and
coded in Procedure~\ref{procedure-signature} works to obtain 
signatures through recursive steps called shift operations. 
The algebraic invariants in the following examples are computed with 
\textit{Macaulay}$2$ \cite{mac2}.

\begin{example}\label{signature-principal-ideal}\rm Let $R=K[x_1,x_2,x_3]$ be
a polynomial ring and let $I=(x_1x_2x_3)$. The incidence
matrices of $I$ and 
its signature ${\rm sgn}(I)$ are given by 
\begin{align*}
A=\left[\begin{matrix}
1\\
1\\
1
\end{matrix}\right]&\ \mbox{ and }\ {\rm sgn}(A)=\left[\begin{matrix}
0\\
0\\
0
\end{matrix}\right],
\end{align*}
respectively. Then ${\rm sgn}(I)=(1)=R$. 
\end{example}

\begin{example}\label{signature-height-one}\rm Let $R=K[x_1,x_2,x_3]$ be
a polynomial ring 
and let $I=(x_1x_2x_3,x_1x_2^2)$. The incidence
matrices of $I$ and 
its signature ${\rm sgn}(I)$ are given by 
\begin{align*}
A=\left[\begin{matrix}
1&1\\
1&2\\
1&0
\end{matrix}\right]&\ \mbox{ and }\ {\rm sgn}(A)=\left[\begin{matrix}
0&0\\
0&1\\
1&0
\end{matrix}\right],
\end{align*}
respectively. Then ${\rm sgn}(I)=(x_3,x_2)$, ${\rm ht}(I)=1$ and 
${\rm Ass}(I)\neq{\rm Ass}({\rm sgn}(I))$. Thus, 
for monomial ideals of height $1$, the associated primes of
the ideal and its signature may be different. 
\end{example}

\begin{example}\label{signature-not-anti-chain}\rm Let $R=K[x_1,x_2,x_3]$ be
a polynomial ring 
and let $I=(x_2x_3^2,x_1x_3,x_1^2)$ and $J=(x_2x_3,x_1x_3,x_1^2)$.
Then, $I$ and $J$ are signature ideals and $I\subsetneq J$. Thus, 
the class of signature ideals in $n$ variables and $q$ minimal
generators do not form an anti-chain under inclusion. A result of
Maclagan \cite{maclagan} shows that all antichains are finite in the
poset of monomial ideals in the polynomial ring $R$, 
ordered by inclusion.  
\end{example}

\begin{example}\label{signature-one-gap}\rm Let $R=K[x_1,x_2,x_3]$ be
a polynomial ring 
and let $I=(x_2^3,\, x_1x_2^2,\,x_1^3x_3^2,\,x_1^4x_2x_3)$. Following
the notation introduced in Section~\ref{tight-matrix-section}, $I$ has
a gap with respect to $x_1$ occurring at 
$x_1x_2^2,\,x_1^3x_3^2$, or simply $I$ has
a gap in $x_1^3x_3^2$ at $x_1$, and one has $p=1$, $q_1=3$, $q_2=4$, $q_1-p=2$, 
$$ 
I_{\rm pol}=(x_2^3,\,x_1x_2^2,\,x_0x_1x_3^2,\,x_0x_1^2x_2x_3),\ \mbox{
and }\ 
I_{\rm sft}=(x_2^3,\,x_1x_2^2,\,x_1^2x_3^2,\,x_1^3x_2x_3).
$$
\quad In this case $I_{\rm sft}={\rm sgn}(I)$ and the incidence
matrices of $I$ and 
its signature are given by 
\begin{align*}
A=\left[\begin{matrix}
0&1&3&4\\
3&2&0&1\\
0&0&2&1
\end{matrix}\right]&\ \mbox{ and }\ {\rm sgn}(A)=\left[\begin{matrix}
0&1&2&3\\
3&2&0&1\\
0&0&2&1
\end{matrix}\right],
\end{align*}
respectively.  
\end{example}

\begin{example}\label{signature-example}\rm Let $R=K[x_1,x_2,x_3,x_4]$ and let  
$I=(x_1x_2x_3^2,\,x_1^6x_3^7,\,x_2^3x_4,\,x_2x_3^3x_4,\,x_2x_4^3)$. 
The depth of $R/I$ is $0$ and the incidence matrix of $I$ and its signature are given by 
\begin{align*}
A=\left[\begin{matrix}
1&6&0&0&0\\
1&0&3&1&1\\
2&7&0&3&0\\
0&0&1&1&3  
\end{matrix}\right]&\ \mbox{ and }\ {\rm sgn}(A)=\left[\begin{matrix}
1&2&0&0&0\\
1&0&2&1&1\\
1&3&0&2&0\\
0&0&1&1&2  
\end{matrix}\right],
\end{align*}
respectively, and ${\rm sgn}(I)=(x_1x_2x_3
,\,x_1^2x_3^3,\,x_2^2x_4,\,x_2x_3^2x_4,\,x_2x_4^2)$. For convenience
we illustrate the recursive procedure to obtain the signature of $I$ by 
successively applying the operation $I\mapsto I_{\rm
sft}$ (see Proposition~\ref{carlitos-vila} and
Lemma~\ref{depth-dim-inv}). The procedure gives the signature ${\rm
sgn}(I)$ of $I$ whose quotient ring $R/{\rm sgn}(I)$ has the same
depth than $R/I$ (Theorem~\ref{depth=sgn}). 

\begin{enumerate} 

\item Generators of $I$ in non-decreasing order with respect to  $x_1$:

\smallskip

$G(I)=\{x_2^3x_4, x_2x_3^3x_4, x_2x_4^3, x_1x_2x_3^2, x_1^6x_3^7\}$.

$p=1$, $q_1=6$, $q_1-p=5$, $x_0\rightarrow x_1^5$,

$G(I_{\rm pol})=\{x_2^3x_4, x_2x_3^3x_4, x_2x_4^3, x_1x_2x_3^2,
x_0x_1x_3^7\}$,

$G(I_{\rm sft})=\{x_2^3x_4, x_2x_3^3x_4, x_2x_4^3, x_1x_2x_3^2,
x_1^2x_3^7\}$.

${\rm reg}(R/I)=12$, ${\rm reg}(R/I_{\rm
sft})={\rm reg}(R[x_0]/I_{\rm pol})=8$, v$(I)=4$, v($I_{\rm sft})=4$.

Since the first row of the incidence matrix of $I_{\rm sft}$ is tight
we now move to the variable $x_2$ and rename $I_{\rm sft}$ by $I$.

\item Generators of $I$ in non-decreasing order with respect to  $x_2$:

\smallskip

$G(I)=\{x_1^2x_3^7, x_2x_3^3x_4,  x_2x_4^3, x_1x_2x_3^2, x_2^3x_4\}$.

$p=1$, $q_1=3$, $q_1-p=2$, $x_0\rightarrow x_2^2$,

$G(I_{\rm pol})=\{x_1^2x_3^7, x_2x_3^3x_4,  x_2x_4^3, x_1x_2x_3^2,
x_0x_2x_4\}$,

$G(I_{\rm sft})=\{x_1^2x_3^7, x_2x_3^3x_4,  x_2x_4^3, x_1x_2x_3^2,
x_2^2x_4\}$.

${\rm reg}(R/I)=8$, ${\rm reg}(R/I_{\rm
sft})={\rm reg}(R[x_0]/I_{\rm pol})=8$, v$(I)=4$, v($I_{\rm sft})=4$.

Since the 1st and 2nd row of the incidence matrix of $I_{\rm sft}$ are tight
we now move to the variable $x_3$ and rename $I_{\rm sft}$ by $I$.

\item Generators of $I$ in non-decreasing order with respect to  $x_3$:

\smallskip

$G(I)=\{x_2x_4^3, x_2^2x_4,   x_1x_2x_3^2, x_2x_3^3x_4,
x_1^2x_3^7\}$.

$p=0$, $q_1=2$, $q_2=3$, $q_3=7$, $q_1-p=2$, $x_0\rightarrow x_3^2$,

$G(I_{\rm pol})=\{x_2x_4^3, x_2^2x_4,   x_1x_2x_0, x_2x_0x_3x_4,
x_1^2x_0x_3^5\}$,

$G(I_{\rm sft})=\{x_2x_4^3, x_2^2x_4,   x_1x_2x_3, x_2x_3^2x_4,
x_1^2x_3^6\}$.

${\rm reg}(R/I)=8$, ${\rm reg}(R/I_{\rm
sft})={\rm reg}(R[x_0]/I_{\rm pol})=7$, v$(I)=4$, v($I_{\rm sft})=3$.

Since the 3rd row of the incidence matrix of $I_{\rm sft}$ not yet tight
we need to continue to do one more step with variable $x_3$ and
rename $I_{\rm sft}$ by $I$.

\item Generators of $I$ in non-decreasing order with respect to  $x_3$:

\smallskip

$G(I)=\{x_2x_4^3, x_2^2x_4,   x_1x_2x_3, x_2x_3^2x_4,
x_1^2x_3^6\}$.

$p=2$, $q_1=6$, $q_1-p=4$, $x_0\rightarrow x_3^4$,

$G(I_{\rm pol})=\{x_2x_4^3, x_2^2x_4,   x_1x_2x_3, x_2x_3^2x_4,
x_1^2x_0x_3^2\}$,

$G(I_{\rm sft})=\{x_2x_4^3, x_2^2x_4,   x_1x_2x_3, x_2x_3^2x_4,
x_1^2x_3^3\}$.

${\rm reg}(R/I)=7$, ${\rm reg}(R/I_{\rm
sft})={\rm reg}(R[x_0]/I_{\rm pol})=4$, v$(I)=3$, v($I_{\rm sft})=3$.

Since the 1st, 2nd and 3rd row of the incidence matrix of $I_{\rm sft}$ are tight
we now move to the variable $x_4$ and rename $I_{\rm sft}$ by $I$.

\item Generators of $I$ in non-decreasing order with respect to  $x_4$:

\smallskip

$G(I)=\{x_1x_2x_3, x_1^2x_3^3, x_2^2x_4,x_2x_3^2x_4, x_2x_4^3
\}$.

$p=1$, $q_1=3$, $q_1-p=2$, $x_0\rightarrow x_4^2$,

$G(I_{\rm pol})=\{x_1x_2x_3, x_1^2x_3^3, x_2^2x_4,x_2x_3^2x_4,
x_2x_0x_4
\}$,

$G(I_{\rm sft})=\{x_1x_2x_3, x_1^2x_3^3, x_2^2x_4,x_2x_3^2x_4,
x_2x_4^2
\}$.

${\rm reg}(R/I)=4$, ${\rm reg}(R/I_{\rm
sft})={\rm reg}(R[x_0]/I_{\rm pol})=4$, v$(I)=3$, v($I_{\rm sft})=3$.
\end{enumerate}

Since all rows of the incidence matrix of $I_{\rm sft}$ are tight
the process stops here and we obtain that $I_{\rm sft}={\rm sgn}(I)$.

Now we show how to recursively apply the weighted partial polarization
operation $I\mapsto I_{\rm
pol}$ to obtain the \textit{full weighted polarization} $I_{\rm pol}$ of $I$, 
the quotient ring $M=S/I_{\rm pol}$ and the
$M$-regular sequences 
$\underline{f}$ and
$\underline{g}$ of Proposition~\ref{sep22-25} such that 
$M/(\underline{f})=R/I$ and $M/(\underline{g})=R/{\rm
sgn}(I)$. 

From (1)--(5), let $z_1,\ldots,z_r$, $r=5$, be new variables, one variable for
each step of the process above, let $S=R[z_1,\ldots,z_5]$, and let 
\begin{align*}
f_1=z_1-x_1^5,\ f_2=z_2-x_2^2,\ f_3=z_3-x_3^2,\ f_4=z_4-x_3^4,\
f_5=z_5-x_4^2,\\
g_1=z_1-x_1,\ g_2=z_2-x_2,\ g_3=z_3-x_3,\ g_4=z_4-x_3,\
g_5=z_5-x_4.
\end{align*}   
\quad Following (1)--(5), we obtain that a full weighted
polarization of $I$ is given by:
$$
I_{\rm pol}=(z_2x_2x_4,\  x_1x_2z_3,\  x_2z_3x_3x_4,\
z_1x_1z_3z_4x_3,\  x_2x_4z_5).
$$
The algebraic invariants if $I$, $I_{\rm pol}$, and ${\rm sgn}(I)$ are given by: 
\begin{align*}
&{\rm ht}(I)={\rm ht}(I_{\rm pol})={\rm ht}({\rm sgn}(I))=2,\ 
\dim(M)=\dim(S/I_{\rm pol})=\dim(R/I)+r=7,\\
&{\rm depth}(R/{\rm sgn}(I))={\rm depth}(R/I)=0,\ {\rm pd}(R/I)={\rm
pd}(R/{\rm sgn}(I))=4,\ \\
&{\rm depth}(M)={\rm depth}(S/I_{\rm pol})={\rm depth}(R/I)+r=5,\
\mbox{ and }\ {\rm pd}(S/I_{\rm pol})=4,\\
&
{\rm reg}(R/I)=12,\ {\rm reg}(R/{\rm sgn}(I))={\rm reg}(S/I_{\rm pol})=4,\ {\rm v}(I)=4,\ {\rm
v}({\rm sgn}(I))=3.\\
&\mbox{Letting }\deg(z_1)=5,\, \deg(z_2)=2,\, \deg(z_3)=2,\, \deg(z_4)=4,\,
\deg(z_5)=2, \mbox{ one has }\\
&{\rm reg}_R(S/(I_{\rm pol},z_1-x_1^5,\,
z_2-x_2^2,\, z_3-x_3^2,\, z_4-x_3^4,\,
z_5-x_4^2))=12,\mbox{ and if the regularity}\\ 
&\mbox{is taken over }S,\mbox{ then }
{\rm reg}_S(S/(I_{\rm pol},z_1-x_1^5,\,
z_2-x_2^2,\, z_3-x_3^2,\, z_4-x_3^4,\,
z_5-x_4^2))=22.
\end{align*}
\end{example}

\begin{example}\label{example-ass-ij}
Let $I$ and $J=I_{\rm sft}$ be the ideals in step (1) 
of Example~\ref{signature-example} given by:
$$
I=(x_2^3x_4, x_2x_3^3x_4, x_2x_4^3, x_1x_2x_3^2, x_1^6x_3^7),\,\  
J=(x_2^3x_4, x_2x_3^3x_4, x_2x_4^3, x_1x_2x_3^2,
x_1^2x_3^7),
$$
and let $p=1$, $q_1=6$, $q_1-p=5$, and $s=1$. 
Given an associated prime $\mathfrak{p}=(I\colon x^a)$ of $I$, 
by Corollary~\ref{ass-v-number-coro}, one can find $x^b$ such that 
$\mathfrak{p}=(J\colon x^b)$, and one has:

$(x_1,x_2)=(I\colon x_1^5x_3^7)=(J\colon  x_1x_3^7)$,

$(x_1,x_4)=(I\colon x_2x_3^3)=(J\colon  x_2x_3^3)$,

$(x_2,x_3)=(I\colon x_1^6x_3^6)=(J\colon  x_1^6x_3^6)$,

$(x_3,x_4)=(I\colon x_1x_2^3x_3)=(J\colon  x_1x_2^3x_3)$,

$(x_2,x_3,x_4)=(I\colon x_1x_2^2x_3x_4^2)=(J\colon  x_1x_2^2x_3x_4^2)$,

$(x_1,x_2,x_3,x_4)=(I\colon x_2^2x_3^2x_4^2)=(J\colon x_2^2x_3^2x_4^2)$.

Conversely, given an associated prime $\mathfrak{p}=(J\colon x^b)$ of
$J$, by Corollary~\ref{ass-v-number-coro}, one can find $x^a$ such that 
$\mathfrak{p}=(I\colon x^a)$, and one has:

$(x_1,x_2)=(J\colon  x_1x_3^7)=(I\colon x_1^5x_3^7)$,

$(x_1,x_4)=(J\colon  x_2x_3^3)=(I\colon x_2x_3^3)$,

$(x_2,x_3)=(J\colon  x_1^2x_3^6)=(I\colon x_1^8x_3^6)$,

$(x_3,x_4)=(J\colon  x_1x_2^3x_3)=(I\colon x_1^7x_2^3x_3)$,

$(x_2,x_3,x_4)=(J\colon  x_1x_2^2x_3x_4^2)=(I\colon x_1^7x_2^2x_3x_4^2)$,

$(x_1,x_2,x_3,x_4)=(J\colon  x_2^2x_3^2x_4^2)=(I\colon x_2^2x_3^2x_4^2).$ 
\end{example}

\begin{example}\label{alldepth}
Let $R=K[x_1,x_2,x_3,x_4]$ be a polynomial ring over a field $K$ and
let $I$ be the monomial ideal $(x_1^2x_3,\, x_2^2x_3,\,x_1^2x_4)$.
Then,
\begin{align*}
&{\rm reg}(R/I)=
{\rm reg}(R/(I\colon x_4))={\rm reg}(R/(I,x_4))=3,\\
&{\rm sgn}(I)=(x_1x_3,\, x_2x_3,\, x_1x_4),\ {\rm reg}(R/{\rm
sgn}(I))=1.
\end{align*}  
Thus, the regularity is not necessarily preserved under the signature.
\end{example}

\begin{example}\label{q-n-infinity}
Given integers $n\geq 2$ and $q\geq 1$, there are distinct monomials 
$x_1^q,x^{a_2},\ldots,x^{a_q}$ of degree $q$ in the polynomial ring
$R=K[x_1,\ldots,x_n]$; because the
number of monomials of $R$ of degree $q$ is $\binom{q+n-1}{n-1}$ and
$\binom{q+n-1}{n-1}\geq q+1$. Letting
$L_k:=(x_1^{kq},x^{a_2},\ldots,x^{a_q})$ for $k\geq 1$, gives an
infinite family of monomial ideals of $R$ minimally generated by $q$
monomials. 
\end{example}

\begin{example}\label{reviewer}
Let $R=K[x_1,x_2,x_3]$ be a polynomial ring over a field $K$ and
let $I$ and $J$ be the monomial ideals
$$
I=(x_1x_3^3,\, x_2x_3^3)\ \mbox{ and }\
J=(x_1x_3^5,\, x_2x_3^5,\, x_1x_2^2),
$$
respectively. These ideals are edge ideals of weighted oriented
graphs. Then,
\begin{align*}
{\rm sgn}(I)=(x_1,x_2)\ \mbox{ and }\  
{\rm sgn}(J)=(x_1x_3,\, x_2x_3,\, x_1x_2^2).
\end{align*}  
\quad If $L=(x_1x_2,\, x_2x_3)$, then $L$ is squarefree, ${\rm
ht}(L)=1$,  
and ${\rm sgn}(L)=(x_1,\, x_3)\neq L$.
\end{example}

\section*{Acknowledgments.} 
\textit{Macaulay}$2$ \cite{mac2} was used to implement 
algorithms for computing the signature of monomial ideals, and 
to verify the Cohen--Macaulay property of a given list 
of monomial ideals.    
We thank the referee for a careful 
reading of the paper and for the improvements suggested. 

\begin{appendix}

\section{Procedures for \textit{Macaulay}$2$}\label{AppendixA}

\begin{procedure}\label{procedure-signature}\rm
This procedure computes the signature of a monomial ideal and the
signature of its incidence matrix using \textit{Macaulay}$2$
\cite{mac2}. This procedure corresponds to 
Example~\ref{signature-example}. 
\begin{verbatim}
restart
loadPackage("Normaliz",Reload=>true)
load "SymbolicPowers.m2"
R=QQ[x1,x2,x3,x4]
I=monomialIdeal(x1*x2*x3^2,x1^6*x3^7,x2^3*x4,x2*x3^3*x4,x2*x4^3)
--exponent vectors of the minimal generators of I
B = matrix flatten apply(flatten entries gens I, exponents)
--incidence matrix of I
A=transpose B
--rows of B
l= entries B
--rows of the incidence matrix of I that are used 
--to compute the signature 
lt=entries A
--This is the signature of A up to permutation of columns
sgnA=matrix apply(lt, y->flatten apply(y, x-> 
positions(sort toList set(y), i -> i == x)) ) 
--This is the signature sgn(I) of I
sgnI=monomialIdeal(for i in entries transpose sgnA list R_i)
\end{verbatim}  
\end{procedure}

\begin{procedure}\label{procedure-list}
Given a set of incidence matrices, the following procedure 
for \textit{Macaulay}$2$ \cite{mac2} compute their signatures and the
signature of the corresponding 
monomial ideals. Then, it decides which elements of the list 
correspond to Cohen--Macaulay (C-M) monomial ideals and put them on a
list in LaTex format. The input for
this procedure 
is a list of incidence matrices of monomial ideals. The list that we
use below as input, is the set of all $3\times 3$ signature matrices,
and the output is the list of all Cohen--Macaulay 
signature ideals in $3$ variables and generated by $3$ monomials 
shown in part (a) of Appendix~\ref{AppendixB}. This procedure also
determines all Gorenstein monomial ideals of the given list and put
them on a new list. To compute other
examples simply change List3x3 by any other list.

\begin{verbatim}
restart
loadPackage "TorAlgebra"
R=QQ[x1,x2,x3]
X=toList flatten entries vars R
--List of signature matrices of size 3x3 
--The procedure below can be applied to any list of 
--incidence matrices of monomial ideals
List3x3=toList{
matrix{{1,0,0},{0,1,0},{0,0,1}},matrix{{0,0 ,1},{2,0,1},{0,2,1}},
matrix{{1,1,0 },{1,0,1 },{0,1,1}},matrix{{1,1,0},{1,0,2},{0,1,2}},
matrix{{0,1,0},{1,0,0},{0,1,2}},matrix{{1,0,0},{1,0,2},{0,2,1}},
matrix{{1,0,1},{1,1,0},{0,1,2}},matrix{{0,1,2},{0,2,1},{2,0,1}},
matrix{{1,0,0},{1,1,0},{0,1,2}},matrix{{1,0,0},{0,2,1},{0,1,2}},
matrix{{0,1,1},{1,2,0},{1,0,2}},matrix{{0,1,2},{1,0,2},{1,2,0}},
matrix{{0,0,1},{1,0,2},{1,2,0}},matrix{{0,1,1},{0,2,1},{2,0,1}},
matrix{{1,1,0 },{0,2,1 },{1,0,2}},matrix{{1,2,0},{2,0,1},{0,1,2}}}
--List of signature matrices in the right format
List2=apply(List3x3,x->entries x)
--This is the function that gives the signature of an incidence matrix
sgnA=(n)-> matrix apply((List2)#n, 
y->flatten apply(y, x-> positions(sort toList set(y), i -> i == x)))
--This gives all signatures of the original list of matrices
ListSgn=apply(0..#List2-1,x->sgnA(x))
--This is the signature function that gives the 
--signature ideal sgn(I) 
sgnI=(n)->monomialIdeal(for i in entries transpose n list R_i)
--This gives the list of signature ideals of the list of matrices
ListSgnI=toList apply(ListSgn,sgnI)
--This is the function to check the Cohen-Macaulay (C-M) property 
CM=(n)-> pdim coker gens gb n-codim(n) 
--The number of 0's in this list gives the number of C-M signatures
apply(ListSgnI,CM)
apply(ListSgnI,CM)-set{1}
--This is the number of C-M signature of 3x3 matrices
#(apply(ListSgnI,CM)-set{1})
--This gives the list of all C-M monomial ideals in our list
CMI=apply(ListSgnI, n-> 
if (pdim coker gens gb n-codim(n))==0 then n else 1)-set{1}
--This gives the list of all C-M monomial ideals for use in LaTex 
toString (apply(ListSgnI, n-> 
if (pdim coker gens gb n-codim(n))==0 then n else 1)-set{1})
--This gives the number of all C-M monomial ideals in our list
#(apply(ListSgnI, n-> 
if (pdim coker gens gb n-codim(n))==0 then n else 1)-set{1})
--This gives the list of all Gorenstein ideals in List3x3
GI=apply(CMI, n-> if isGorenstein(n)==true then n else 1)
GI=(apply(CMI, n-> if isGorenstein(n)==true then n else 1)-set{1})
\end{verbatim}
\end{procedure}

\section{A List of signature Cohen--Macaulay monomial ideals}
\label{AppendixB}
The full list of signature matrices of sizes $3\times 3$, $4\times 3$, 
and $3\times 4$ are given in \cite[Chapter~4]{moran-tesis}. 
We assume that a monomial ideal with incidence matrix of size $n\times q$ 
has $q$ minimal generators, $n$ variables, and that all variables 
$x_1,\ldots,x_n$ occur in $G(I)$. Using
Procedure~\ref{procedure-list}, we obtain the following three lists of
Cohen--Macaulay ideals:

(a) Out of 16 signature matrices of size $3\times 3$, there are
exactly $10$ of them that correspond to
the following Cohen--Macaulay monomial ideals.

$(x_1,\, x_2,\, x_3)$,\  $(x_1x_2,\, x_1x_3,\, x_2x_3)$, $(x_1x_2,\, x_1x_3,\, x_2^2x_3^2)$,\  $(x_1x_2,\, x_2x_3,\,
x_1x_3^2)$, 

$(x_1x_2,\, x_2x_3,\, x_3^2)$,\  $(x_1x_2^2,\, x_2x_3,\, x_1x_3^2)$,\ 
$(x_1^2x_2^2,\, x_2x_3,\, x_1x_3^2)$,

$(x_1x_2^2,\, x_2x_3,\, x_3^2)$,\  $(x_1x_2^2,\, x_1x_2x_3,\, x_3^2)$,\ 
 $(x_1x_2^2,\, x_1x_3,\, x_2x_3^2)$.

\medskip

(b) Out of 59 signature matrices of size $4\times 3$, there
are exactly 
the following $29$ signature matrices that correspond to Cohen--Macaulay monomial
ideals.

$(x_1x_3,x_2x_4,x_3x_4)$, $(x_1x_3,x_3x_4,x_2x_4^2)$, $(x_1x_3^2,x_3x_4,x_2x_4^2)$,

$(x_1x_2x_3^2,x_2x_3x_4,x_4^2)$, $(x_1x_2x_3,x_2x_4,x_3x_4)$, $(x_1x_2x_3,x_3x_4,x_2x_4^2)$,

$(x_1x_2x_3^2,x_2x_4,x_3x_4)$, $(x_1x_2x_3^2,x_3x_4,x_2x_4^2)$, $(x_1x_2x_3^2,x_2x_4,x_3x_4^2)$, 

$(x_2x_3,x_1x_2x_4,x_3x_4^2)$, 
$(x_1x_2,x_2x_3x_4,x_3x_4^2)$, 
$(x_1x_2x_3,x_2x_4,x_3^2x_4^2)$, 

$(x_1x_2^2x_3^2,x_2x_4,x_3x_4)$, 
$(x_1x_2^2x_3^2,x_3x_4,x_2x_4^2)$, 
$(x_2x_3,x_1x_2^2x_4,x_3x_4^2)$, 

$(x_1x_2^2,x_2x_3x_4,x_3x_4^2)$, 
$(x_1x_2^2x_3,x_2x_4,x_3^2x_4^2)$, 
$(x_1x_2^2x_3,x_1x_2x_4,x_3x_4)$, 

$(x_1x_2^2x_3,x_3x_4,x_1x_2x_4^2)$, 
$(x_1x_2^2x_3,x_1x_2x_4,x_3x_4^2)$, 
$(x_1x_2^2x_3^2,x_1x_2x_4,x_3x_4)$, 

$(x_1x_2^2x_3^2,x_3x_4,x_1x_2x_4^2)$, 
$(x_1x_2^2x_3^2,x_1x_2x_4,x_3x_4^2)$, 
$(x_1x_2x_3,x_1x_2^2x_4,x_3x_4^2)$, 

$(x_1x_2^2,x_1x_2x_3x_4,x_3x_4^2)$, 
$(x_1x_2^2x_3,x_1x_2x_4,x_3^2x_4^2)$, 
$(x_1x_2,x_1x_3^2x_4,x_2x_3x_4^2)$, 

$(x_1x_2^2x_3,x_1x_4,x_2x_3^2x_4^2)$, 
$(x_1^2x_2x_3^2,x_3x_4,x_1x_2^2x_4^2)$.

\medskip 
(c) Out of 285 signature matrices of size $3\times 4$, there
are exactly the following $80$ signature matrices that correspond to Cohen--Macaulay monomial
ideals.

$(x_1,x_2^2,x_2x_3,x_3^2)$, 
$(x_1x_2^2,x_2^2x_3,x_2x_3^2,x_3^3)$, 
$(x_1x_2^3,x_2^2x_3,x_2x_3^2,x_3^3)$, 

$(x_1x_2^2,x_1x_2x_3,x_2x_3^2,x_3^3)$, 
$(x_1x_2^3,x_1x_2^2x_3,x_2x_3^2,x_3^3)$,
$(x_1x_2^2,x_2^2x_3,x_1x_3^2,x_2x_3^2)$, 

$(x_1x_2^2,x_2^2x_3,x_2x_3^2,x_1x_3^3)$, 
$(x_1x_2^3,x_2^2x_3,x_2x_3^2,x_1x_3^3)$, 
$(x_1^2x_2,x_1x_2x_3,x_2x_3^2,x_3^3)$, 

$(x_1^2x_2^2,x_1x_2x_3,x_2x_3^2,x_3^3)$, 
$(x_1^2x_2^2,x_1x_2^2x_3,x_2x_3^2,x_3^3)$,
$(x_1^2x_2^3,x_1x_2^2x_3,x_2x_3^2,x_3^3)$, 

$(x_1^2,x_2^2,x_1x_2x_3,x_3^2)$, 
$(x_1^2x_2^2,x_2^2x_3,x_1x_3^2,x_2x_3^2)$, 
$(x_1^2x_2^2,x_2^2x_3,x_2x_3^2,x_1x_3^3)$, 

$(x_1^2x_2^3,x_2^2x_3,x_1x_3^2,x_2x_3^2)$, 
$(x_1^2x_2^3,x_2^2x_3,x_2x_3^2,x_1x_3^3)$, 
$(x_1x_2^3,x_1x_2^2x_3,x_1x_2x_3^2,x_3^3)$, 

$(x_1x_2^2,x_1x_2x_3,x_1x_3^2,x_2x_3^2)$, 
$(x_1x_2^2,x_1x_2x_3,x_2x_3^2,x_1x_3^3)$, 
$(x_1x_2^2,x_1x_2x_3,x_1x_3^2,x_2x_3^3)$, 

$(x_1x_2^3,x_1x_2^2x_3,x_1x_3^2,x_2x_3^2)$,
$(x_1x_2^3,x_1x_2^2x_3,x_2x_3^2,x_1x_3^3)$,
$(x_1x_2^3,x_1x_2^2x_3,x_1x_3^2,x_2x_3^3)$, 

$(x_1x_2^2,x_1x_2x_3,x_1x_3^2,x_2^2x_3^2)$,
$(x_1x_2^2,x_1x_2x_3,x_2^2x_3^2,x_1x_3^3)$,
$(x_1x_2^2,x_1x_2x_3,x_1x_3^2,x_2^2x_3^3)$, 

$(x_1x_2^3,x_1x_2x_3,x_2^2x_3^2,x_1x_3^3)$, 
$(x_1x_2^3,x_1x_2x_3,x_1x_3^2,x_2^2x_3^3)$, 
$(x_1x_2^2,x_1x_2x_3,x_1x_3^2,x_2^3x_3^3)$, 

$(x_1^2x_2^2,x_1x_2^2x_3,x_1x_2x_3^2,x_3^3)$,
$(x_1^2x_2^3,x_1x_2^2x_3,x_1x_2x_3^2,x_3^3)$,
$(x_1^2x_2^2,x_1x_2x_3,x_1x_3^2,x_2x_3^2)$, 

$(x_1^2x_2^2,x_1x_2x_3,x_2x_3^2,x_1x_3^3)$, 
$(x_1^2x_2^2,x_1x_2^2x_3,x_1x_3^2,x_2x_3^2)$, 
$(x_1^2x_2^2,x_1x_2^2x_3,x_2x_3^2,x_1x_3^3)$, 

$(x_1^2x_2^2,x_1x_2^2x_3,x_1x_3^2,x_2x_3^3)$, 
$(x_1^2x_2^3,x_1x_2^2x_3,x_1x_3^2,x_2x_3^2)$, 
$(x_1^2x_2^3,x_1x_2^2x_3,x_2x_3^2,x_1x_3^3)$, 

$(x_1^2x_2^3,x_1x_2^2x_3,x_1x_3^2,x_2x_3^3)$, 
$(x_1x_2^2,x_1^2x_3,x_1x_2x_3,x_2x_3^2)$, 
$(x_1x_2^2,x_1x_2x_3,x_1^2x_3^2,x_2x_3^2)$, 

$(x_1x_2^2,x_1x_2x_3,x_2x_3^2,x_1^2x_3^3)$,
$(x_1x_2^2,x_1x_2x_3,x_1^2x_3^2,x_2x_3^3)$,
$(x_1x_2^3,x_1x_2^2x_3,x_1^2x_3^2,x_2x_3^2)$, 

$(x_1x_2^3,x_1x_2^2x_3,x_2x_3^2,x_1^2x_3^3)$, 
$(x_1^2x_2^2,x_1x_2x_3,x_1x_3^2,x_2^2x_3^2)$, 
$(x_1^2x_2^2,x_1x_2x_3,x_2^2x_3^2,x_1x_3^3)$, 

$(x_1^2x_2^2,x_1x_2x_3,x_1x_3^2,x_2^2x_3^3)$, 
$(x_1^2x_2^3,x_1x_2x_3,x_1x_3^2,x_2^2x_3^2)$, 
$(x_1^2x_2^3,x_1x_2x_3,x_2^2x_3^2,x_1x_3^3)$, 

$(x_1^2x_2^3,x_1x_2x_3,x_1x_3^2,x_2^2x_3^3)$, 
$(x_1x_2^2,x_1x_2x_3,x_1^2x_3^2,x_2^2x_3^3)$, 
$(x_1x_2^3,x_1x_2x_3,x_1^2x_3^2,x_2^2x_3^3)$, 

$(x_1^2x_2^2,x_1x_2x_3,x_1x_3^2,x_2^3x_3^3)$, 
$(x_1^2x_2^3,x_1^2x_2^2x_3,x_1x_2x_3^2,x_3^3)$, 
$(x_1^2x_2^3,x_1^2x_2^2x_3,x_1x_3^2,x_2x_3^2)$, 

$(x_1^2x_2^3,x_1^2x_2^2x_3,x_2x_3^2,x_1x_3^3)$, 
$(x_1^2x_2^3,x_1^2x_2^2x_3,x_1x_3^2,x_2x_3^3)$, 
$(x_1^2x_2^2,x_1x_2^2x_3,x_1^2x_3^2,x_2x_3^2)$, 

$(x_1^2x_2^2,x_1x_2^2x_3,x_2x_3^2,x_1^2x_3^3)$, 
$(x_1^2x_2^3,x_1x_2^2x_3,x_1^2x_3^2,x_2x_3^2)$, 
$(x_1^2x_2^3,x_1x_2^2x_3,x_2x_3^2,x_1^2x_3^3)$, 

$(x_1^2x_2^3,x_1^2x_2x_3,x_1x_3^2,x_2^2x_3^2)$, 
$(x_1^2x_2^3,x_1^2x_2x_3,x_1x_3^2,x_2^2x_3^3)$, 
$(x_1^2x_2^2,x_1x_2x_3,x_1^2x_3^2,x_2^2x_3^2)$, 

$(x_1^2x_2^2,x_1x_2x_3,x_2^2x_3^2,x_1^2x_3^3)$, 
$(x_1^2x_2^3,x_1x_2x_3,x_2^2x_3^2,x_1^2x_3^3)$, 
$(x_1^2x_2^3,x_1^2x_3,x_1x_2x_3^2,x_2^2x_3^2)$, 

$(x_1^2x_2^3,x_1x_2x_3,x_1^2x_3^2,x_2^2x_3^3)$, 
$(x_1^2x_2^3,x_1^2x_3,x_1x_2x_3^2,x_2^2x_3^3)$, 
$(x_1^2x_2^2,x_1^2x_2x_3,x_1x_3^2,x_2^3x_3^3)$, 

$(x_1^2x_2^2,x_1x_2x_3,x_1^2x_3^2,x_2^3x_3^3)$, 
$(x_1^2x_2^2,x_1^2x_3,x_1x_2x_3^2,x_2^3x_3^3)$, 
$(x_1^2x_2,x_1^2x_3,x_1x_2^2x_3^2,x_2^3x_3^3)$, 

$(x_1^3x_2^3,x_1^2x_2^2x_3,x_2x_3^2,x_1x_3^3)$, 
$(x_1^3x_2^3,x_1x_2^2x_3,x_2x_3^2,x_1^2x_3^3)$, 
$(x_1^2x_2^3,x_1x_2^2x_3,x_2x_3^2,x_1^3x_3^3)$, 

$(x_1^3x_2^3,x_1x_2x_3,x_2^2x_3^2,x_1^2x_3^3)$, 
$(x_1^2x_2^3,x_1x_2x_3,x_1^3x_3^2,x_2^2x_3^3)$.

\medskip

(d) In the list of Cohen--Macaulay ideals given in (a) to (c), the
only Gorenstein ideal is $(x_1,x_2,x_3)$ which is not surprising 
since it is known that graded Gorenstein ideals of codimension $2$ are complete
intersections \cite[p.~119, 3.3.25(d)]{BHer}. 
\end{appendix}

\section*{Statements and Declarations}  
On behalf of all authors, the corresponding author states that there is no conflict of interest.

No funding was received for conducting this study.

The authors have no relevant financial or non-financial interests to disclose.

Data sharing is not applicable to this article as no datasets were
generated or analyzed during the current study.

\bibliographystyle{plain}

\end{document}